\theoremstyle{plain}
\newtheorem{theorem}{Theorem}[section]
\newtheorem*{theorem*}{Theorem}
\newtheorem{lemma}[theorem]{Lemma}
\newtheorem{corollary}[theorem]{Corollary}
\newtheorem{proposition}[theorem]{Proposition}
\theoremstyle{remark}
\newtheorem{definition}[theorem]{Definition}
\newtheorem{remark}[theorem]{Remark}
\numberwithin{equation}{section}
\newcommand\quant{\advance\quantno by1
                      \ifnum\quantno=1\qquad\else\quad\fi\forall }
\newcommand\itemno[1]{(\romannumeral #1)}
\renewcommand\Re{\operatorname{\mathrm{Re}}}
\renewcommand\Im{\operatorname{\mathrm{Im}}}
\newcommand\grad{\operatorname{\rm{grad}}}
\newcommand\Div{\operatorname{\rm{div}}}
\newcommand\Var{\operatorname{\rm{Var}}}
\newcommand\Lip{\operatorname{\rm{Lip}}}
\newcommand\rest[1]{\kern-.1em
          \lower.5ex\hbox{$\scriptstyle #1$}\kern.05em}
\newcommand\set[1]{{\left\{#1\right\}}}
\renewcommand\mod[1]{\left\vert{#1}\right\vert}
\newcommand\bigmod[1]{\bigl\vert{#1}\bigr|}
\newcommand\Bigmod[1]{\Bigl\vert{#1}\Bigr|}
\newcommand\norm[2]{{\Vert{#1}\Vert_{#2}}}
\newcommand\opnorm[2]{|\!|\!| {#1} |\!|\!|_{#2}}
\newcommand\smallfrac[2]{\mbox{\small$\displaystyle\frac{#1}{#2}$}}
\newcommand\wrt{\,\text{\rm d}}
\newcommand\1{{\bf 1}}
\newcommand\bS{\mathbf{S}} 
\newcommand\bT{\mathbf{T}} 
\newcommand\bW{\mathbf{W}}
\newcommand\BC{\mathbb{C}}
\newcommand\BR{\mathbb{R}}
\newcommand\BZ{\mathbb{Z}}
   \newcommand\fra{\mathfrak{a}}
\newcommand\cB{\mathcal{B}}   
\newcommand\cC{\mathcal{C}}
                               \newcommand\frg{\mathfrak{g}}
\newcommand\cI{\mathcal{I}}
\newcommand\cL{\mathcal{L}}   
\newcommand\cM{\mathcal{M}}
\newcommand\cQ{\mathcal{Q}}
\newcommand\cT{\mathcal{T}}
\newcommand\al{\alpha}
\newcommand\be{\beta}
\newcommand\de{\delta}
\newcommand\ep{\epsilon}  \newcommand\vep{\varepsilon}
\newcommand\la{\lambda}   
\newcommand\om{\omega}      
\newcommand\si{\sigma}
\newcommand\te{\theta}
\newcommand\OV{\overline}
\newcommand\funnyk{k\hbox to 0pt{\hss\phantom{g}}}
\newcommand\lu[1]{L^1(#1)}
\newcommand\lp[1]{L^p(#1)}
\newcommand\ld[1]{L^2(#1)}
\newcommand\ldO[1]{L^2_0(#1)}
\newcommand\ly[1]{L^\infty(#1)}
\newcommand\hu[1]{H^1(#1)}
\newcommand\wt{\widetilde}
\newcommand\whH{\widehat{\phantom{G}}\hbox to 0pt{\hss $H$}}
\newcommand\emspace{\hbox to 6pt{\hss}}
\newcommand\ds{\displaystyle}
\newcommand\rmi{\hbox{\rm (i)}}
\newcommand\rmii{\hbox{\rm (ii)}}
\newcommand\rmiii{\hbox{\rm (iii)}}
\newcommand\rmiv{\hbox{\rm (iv)}}
\newcommand\rmv{\hbox{\rm (v)}}
\newcommand\ioty{\int_0^{\infty}}
\newcommand\One{{\mathbf{1}}}
\newcommand\OU{Ornstein--Uhlenbeck}
\newcommand\e{\mathrm{e}}
\newcommand\inj{\mathrm{inj}}
\newcommand\diam[1]{\mathrm{diam}#1}
\newcommand\EB{{\rm{(AMP)}}}
\newcommand\PP{{\rm{(I)}}}
\newcommand\supp{\mathrm{supp}\,}
\newcommand\osc{\mathrm{osc}}
\begin{document}

\title[$H^1$ and $BMO$ for nondoubling spaces]
{$H^1$ and $BMO$ for certain nondoubling \\  metric measure  spaces}

\subjclass[2000]{}

\keywords{Singular integrals, $BMO$, atomic Hardy space,
Riesz transform, metric measure spaces, multipliers.}

\thanks{Work partially supported by the
Progetto Cofinanziato ``Analisi Armonica''.}

\author[A. carbonaro, G. Mauceri and S. Meda]
{Andrea Carbonaro, Giancarlo Mauceri and Stefano Meda}

\address{A. Carbonaro, G. Mauceri: Dipartimento di Matematica \\
Universit\`a di Genova \\ via Dodecaneso 35, 16146 Genova \\ Italia}

\address{E-mail addresses: carbonaro@dima.unige.it \\
mauceri@dima.unige.it}

\address{S. Meda: Dipartimento di Matematica e Applicazioni
\\ Universit\`a di Milano-Bicocca\\
via R.~Cozzi 53\\ 20125 Milano\\ Italy}

\address{E-mail address: stefano.meda@unimib.it}

\begin{abstract}
Suppose that $(M,\rho,\mu)$ is a metric measure space,
which possesses two ``geometric'' properties, called
``isoperimetric'' property and approximate midpoint property,
and that the measure~$\mu$ is locally doubling. The isoperimetric property implies that the volume of balls grows at least exponentially with the radius. Hence the measure $\mu$ is not globally doubling.
In this paper we define an atomic Hardy space $H^1(\mu)$,
where atoms are supported only on ``small balls'',
and a corresponding space $BMO(\mu)$ 
of functions of ``bounded mean oscillation'',
where the control is only on the oscillation over small balls.
We prove that $BMO(\mu)$ is the dual of $H^1(\mu)$ and that an
inequality of John--Nirenberg type 
on small balls holds for functions in $BMO(\mu)$.  
Furthermore, we show that the $\lp{\mu}$ spaces are intermediate 
spaces between $H^1(\mu)$ and $BMO(\mu)$,
and we develop a theory of singular integral operators
acting on function spaces on $M$.
Finally, we show that our theory is strong enough to
give $\hu{\mu}$-$\lu{\mu}$ and $\ly{\mu}$-$BMO(\mu)$
estimates for various interesting operators on Riemannian manifolds
and symmetric spaces which 
are unbounded on $\lu{\mu}$ and on $\ly{\mu}$.
\end{abstract}

\maketitle

\setcounter{section}{0}
\section{Introduction} \label{s:Introduction}

Suppose that $(M,\rho,\mu)$ is a metric measure space.
Assume temporarily that $\mu$ is a doubling measure; then $(M,\rho,\mu)$ 
is a space of homogeneous type in the sense of
Coifman and Weiss.   Harmonic analysis on
spaces of homogeneous type has been the object of many
investigations.  In particular, 
the atomic Hardy space $\hu{\mu}$
and the space $BMO(\mu)$ of functions
of bounded mean oscillation have been defined and studied
in this setting.  We briefly recall their definitions.

An atom $a$ is a function in $\lu{\mu}$ supported in 
a ball $B$ which satisfies appropriate ``size''
and cancellation
condition.  Then $\hu{\mu}$ is the space of
all functions in $\lu{\mu}$ that admit a decomposition
of the form $\sum_j \la_j \, a_j$, where the $a_j$'s
are atoms and the sequence of complex numbers $\{\la_j\}$
is summable.

A locally integrable function $f$ is in $BMO(\mu)$ if 
$$
\sup_B \smallfrac{1}{\mu(B)} \int_B \mod{f-f_B} \wrt \mu < \infty,
$$
where the supremum is taken over \emph{all} balls $B$, and $f_B$
denotes the average of $f$ over $B$.

These spaces enjoy many of the properties of their
Euclidean counterparts.  In particular,
the topological dual of $H^1({\mu})$ is isomorphic to $BMO({\mu})$,
an inequality of John--Nirenberg type holds for functions
in $BMO(\mu)$, the spaces $\lp{\mu}$ are intermediate
spaces between $H^1(\mu)$ and $BMO(\mu)$ for the
real and the complex interpolation methods.
Furthermore, some important operators, which are bounded
on $\lp{\mu}$ for all $p$ in $(1,\infty)$, but otherwise unbounded
on $\lu{\mu}$ and on $\ly{\mu}$, turn out to be bounded
from $H^1({\mu})$ to $\lu{\mu}$ and from $\ly{\mu}$ to $BMO({\mu})$.
We remark that the doubling property is key in 
establishing these results.

There is a huge literature on this subject: we refer the reader
to \cite{CW,St2} and the references therein for further information.

There are interesting cases where $\mu$ is not doubling;
then $\mu$ may or may not be locally doubling.
An important case in which $\mu$ is not even locally
doubling is that of nondoubling measures of polynomial growth
treated, for instance, in \cite{NTV, To, V}, where new
spaces $H^1$ and $BMO$ are defined, and a rich theory is developed
(see also \cite{MMNO} for more general measures on $\BR^n$).
We also mention recent works of X.T.~Duong and L.~Yan
\cite{DY1,DY2}, who define an Hardy space $H^1$ and
a space $BMO$ of bounded mean oscillation
associated to a given operator satisfying suitable estimates.  
This is done in metric measure spaces with
the doubling property, but it is a remarkable fact that
the theory works also for ``bad domains''
in the ambient space, to which the restriction of
 the measure $\mu$ may be nondoubling. 

In this paper we consider the case where $\mu(M) = \infty$,
and $\mu$ is a nondoubling locally doubling measure. 
By this we roughly mean that for every $R$ in $\BR^+$
balls of radius at most $R$ satisfy a doubling condition,
with doubling constant that may depend on $R$ (see (\ref{f: LDC})
in Section~\ref{s: Geom prop} for the precise definition).
Important examples of this situation are complete
Riemannian manifolds with Ricci curvature bounded from below,
a class which includes all Riemannian symmetric spaces of the noncompact type
and Damek--Ricci spaces.
In recent years, analysis on complete Riemannian manifolds
satisfying the local doubling condition has been the object
of many investigations.  For instance, see \cite{Sa} and the references
therein for the equivalence between a scale-invariant parabolic Harnack
inequality and a scaled Poincar\'e inequality,
and \cite{CD,ACDH,Ru} for recents results on the boundedness of 
Riesz transforms on such manifolds.

Our approach to the case of locally doubling measures
is inspired by a result of A.D. Ionescu \cite{I}
on rank one symmetric spaces of the noncompact type
and by a recent paper of the second and third named authors
concerning the analysis of the Ornstein--Uhlenbeck operator \cite{MM}.

For each ``scale'' $b$ in $\BR^+$, we define spaces
$H_b^1(\mu)$ and $BMO_b(\mu)$ much as in the case of spaces
of homogeneous type, the only difference being that
we require that the balls involved have at most radius $b$.
So, for instance, an $H_b^1(\mu)$ atom is an atom supported
in a ball of radius at most $b$. 
We remark that in the case where $M$ is a symmetric space 
of the noncompact type and real rank one, the space $BMO_1(\mu)$
agrees with the space defined by Ionescu.  
Ionescu also proved that
if $p$ is in $(1,2)$, then $\lp{\mu}$ is an interpolation
space between $\ld{\mu}$ and $BMO_1(\mu)$ for the complex
method of interpolation.
In the case where $M$ is a complete noncompact Riemannian manifold
with locally doubling Riemannian measure and satisfying certain
additional assumptions E.~Russ \cite{Ru} defined an Hardy
space that agrees with the space $H_1^1(\mu)$ defined above,
but he did not investigate its structural properties.

We prove that under a mild ``geometric''
assumption, which we call property (AMP) (see Section~\ref{s: Geom prop}), 
$H_b^1(\mu)$ and $BMO_b(\mu)$, in fact, do not depend on the parameter $b$
provided that $b$ is large enough
(see Section~\ref{s: H1}).  Furthermore, we show that
$BMO_b(\mu)$ is isomorphic to the topological dual of $H_b^1(\mu)$
(see Section~\ref{s: duality}), and that functions in $BMO_b(\mu)$ satisfy an
inequality of John--Nirenberg type (see Section~\ref{s: BMO}).

As far as interpolation is concerned, there is no reason to believe
that in this generality $\lp{\mu}$ spaces with $p$ in $(1,\infty)$
are interpolation spaces between
$H_b^1(\mu)$ and $BMO_b(\mu)$.   However, this is true under
a simple geometric assumption on $M$, called property \PP.  
Roughly speaking, $M$ possesses property \PP\ if  
a fixed ratio of the measure of any bounded open set
is concentrated near its boundary.  
If $M$ possesses property \PP, then a basic relative
distributional inequality for the local sharp function
and the local Hardy--Littlewood maximal function holds.
We prove this in Section~\ref{s: sharp},
by adapting to our setting some ideas of Ionescu \cite{I}.
We remark that our approach, which makes use of
the dyadic cubes of M.~Christ and G.~David \cite{Ch,Da}, simplifies
considerably the original proof in \cite{I}.
As a consequence of the relative distributional inequality
we prove an interpolation result for analytic families of operators,
analogous to that proved by C.~Fefferman and E.M.~Stein
\cite{FS} in the classical setting.

An interesting application of the aforementioned interpolation
result is to singular integral operators
(Theorem~\ref{t: singular integrals}).
We prove that if $\cT$ is
a bounded self adjoint operator on $\ld{\mu}$ and its kernel $k$ is
a locally integrable function off the diagonal in $M\times M$
and satisfies a \emph{local H\"ormander type condition}
(i.e. if $\nu_k <\infty$ and $\upsilon_k<\infty$, where $\nu_k$
and $\upsilon_k$ are defined in
the statement of Theorem~\ref{t: singular integrals}), 
then~$\cT$ extends to a bounded
operator on $\lp{\mu}$ for all $p$ in $(1,\infty)$,
from $H^1({\mu})$ to $\lu{\mu}$ and from $\ly{\mu}$ to $BMO({\mu})$.
Applications of this results 
to multipliers for the spherical Fourier transform on
Riemannian symmetric spaces of the noncompact type, 
to spectral operators of the Laplace--Beltrami operator 
and to local Riesz transforms on certain noncompact Riemannian manifolds
are given in Section~\ref{s: Applications}.  Our results
complement earlier results of J.Ph.~Anker \cite{A}, M.~Taylor
\cite{Ta} and Russ \cite{Ru}.

Of course, there are many other interesting operators on measured
metric spaces with properties (AMP), (LDP) and (I),
which are unbounded on $\lu{\mu}$ and on $\ly{\mu}$,
but satisfy $\hu{\mu}$-$\lu{\mu}$ and 
$\ly{\mu}$-$BMO(\mu)$ estimates.   Some of these will be 
considered in a forthcoming paper.

It is interesting to speculate about the range of applicability
of the theory we develop.  In particular, a natural problem
is to find conditions (possibly easy to verify) under which a
complete Riemannian manifold possesses all the three properties,
local doubling, \PP, and (AMP), needed to prove the results
of Sections~\ref{s: Geom prop}-\ref{s: Singular integrals}.
This problem is considered 
in Section~\ref{s: Cheeger}.  Suppose that
$M$ is a complete Riemannian manifold
with Riemannian distance $\rho$ and Riemannian density $\mu$.
A known fact, which is a straightforward consequence
of the Bishop--Gromov comparison Theorem,
is that if $M$ has Ricci curvature bounded from below,
then $(M,\rho,\mu)$ is locally doubling.
Furthermore, since $\rho$ is a length distance, $(M,\rho,\mu)$
has property (AMP).  
We shall prove that $M$ possesses property
\PP\ if and only if the Cheeger isoperimetric constant $h(M)$ 
(see (\ref{f: Cheeger}) for the definition) is strictly positive.  
As a consequence we shall prove that if $M$ has Ricci curvature
bounded from below, then $M$ possesses property~\PP\ 
if and only if the bottom $b(M)$ of the spectrum of 
$M$ is strictly positive.

Similar results on graphs with bounded geometry will
appear elsewhere.

Finally, it would be interesting to consider the case
where $\mu(M) < \infty$.   
To keep the length of this paper reasonable, 
we shall postpone the detailed study of the case where $\mu(M) < \infty$
to a forthcoming paper \cite{CMM}.

\section{Geometric assumptions} \label{s: Geom prop}

Suppose that $(M,\rho,\mu)$ is a metric measure space,
and denote by $\cB$ the family of all balls on $M$.
We assume that $\mu(M) > 0$ and that every ball has finite measure.
For each $B$ in $\cB$ we denote by $c_B$ and $r_B$
the centre and the radius of $B$ respectively.  
Furthermore, we denote by $\kappa \, B$ the
ball with centre $c_B$ and radius $\kappa \, r_B$.
For each $b$ in $\BR^+$, we denote by $\cB_b$ the family of all
balls $B$ in $\cB$ such that $r_B \leq b$.  
For any subset $A$ of $M$ and each $\kappa$ in $\BR^+$
we denote by $A_{\kappa}$ and $A^{\kappa}$ the sets
$$
\bigl\{x\in A: \rho(x,A^c) \leq \kappa\bigr\}
\qquad\hbox{and}\qquad
\bigl\{x\in A: \rho(x,A^c) > \kappa\bigr\}
$$
respectively.

In Sections~\ref{s: Geom prop}-\ref{s: Singular integrals}
we assume that $M$ is \emph{unbounded} 
and possesses the following properties:
\begin{enumerate}
\item[\itemno1]
\emph{local doubling property} (LDP):
for every $b$ in $\BR^+$ there exists a constant $D_b$ 
such that
\begin{equation}  \label{f: LDC} 
\mu \bigl(2 B\bigr)
\leq D_b \, \mu  \bigl(B\bigr)
\quant B \in \cB_b. 
\end{equation}
This property is often called \emph{local doubling condition}
in the literature, and we adhere to this terminology.
Note that if (\ref{f: LDC}) holds and $M$ is bounded, then $\mu$ is doubling.
\item[\itemno2]
\emph{isoperimetric property} \hbox{\PP}:
there exist $\kappa_0$ and $C$ in $\BR^+$ such that
for every bounded open set $A$
\begin{equation}
\label{f: IP}
\mu \bigl(A_{\kappa}\bigr)
\geq C \, \kappa\, \mu (A) \quant \kappa\in (0,\kappa_0]. 
\end{equation}
Suppose that $M$ has property~\PP.  For each $t$ in $(0,\kappa_0]$
we denote by $C_{t}$
the supremum over all constants $C$ for which (\ref{f: IP}) holds for all 
$\kappa$ in $(0,t]$.  Then we define $I_M$ by
$$
I_M = \sup \bigl\{ C_{t}: t\in (0,\kappa_0] \bigr\}.
$$
Note that the function $t\mapsto C_t$ is decreasing on $(0,\kappa_0]$,
so that
\begin{equation} \label{f: Ct decr}
I_M = \lim_{t\to 0^+} C_t;
\end{equation}
\item[\itemno3] 
\emph{property} \hbox{\EB} (approximate midpoint property):
there exist $R_0$ in $[0,\infty)$ and 
$\be$ in $(1/2,1)$ such that for every pair of points~$x$ and $y$
in $M$ with $\rho(x,y) > R_0$ there exists a point $z$ in $M$ such that
$\rho(x,z) < \beta\, \rho (z,y)$ and $\rho(x,y) < \beta\, \rho (x,y)$.

This is clearly equivalent to the requirement that there exists a
ball~$B$ containing~$x$ and $y$ such that $r_B < \be \, \rho(x,y)$.
\end{enumerate}

\begin{remark}
Observe that the isoperimetric property \hbox{\PP}
implies that for every open set $A$ of \emph{finite measure}
$$
\mu \bigl(A_{\kappa}\bigr)
\geq C \, \kappa\, \mu (A) \quant \kappa\in (0,\kappa_0],
$$
where $\kappa_0$ and $C$ are as in (\ref{f: IP}).

Indeed, suppose that $A$ is an open set of finite measure.  
Fix a reference point $o$ in $M$ and denote by $B(o,j)$
the ball with centre $o$ and radius $j$, and by $A(j)$
the set $A \cap  B(o,j)$. 
For each $\kappa$ in $(0,\kappa_0]$ denote by $A_{j,\kappa}$
the set
$$
\bigl\{x \in A(j):
\rho\bigl(x, B(o,j)^c\bigr) \leq \kappa, \rho(x,A^c) > \kappa\bigr\}.
$$
First we prove that 
\begin{equation} \label{f: limit}
\lim_{j\to\infty} \mu \bigl(A_{j,\kappa} \bigr)
= 0.
\end{equation}
Since $\mu(A) < \infty$, for each $\ep >0$ there
exists $J$ such that 
$$
\mu\bigl( A \cap B(o,J)^c\bigr) < \ep.
$$
Now, if $j>J+ \kappa$ and $x$ is in $A_{j,\kappa}$, then $x$
belongs also to $A\cap B(o,J)^c$, whence $\mu(A_{j,\kappa}) < \ep$
for all $j \geq J$, as required. 

Observe that $A_{j,\kappa}$ is contained in $A(j)_{\kappa}$
and that 
$
A_{j,\kappa} 
= A(j)_{\kappa} \setminus \bigl(B(o,j)\cap A_\kappa\bigr)$. 
Therefore
\begin{equation} \label{f: limit II}
\mu\bigl(B(o,j)\cap A_\kappa\bigr)
= \mu\bigl(A(j)_{\kappa}\bigr) - \mu\bigl(A_{j,\kappa}\bigr).
\end{equation}
Since $\mu(A_\kappa) = \lim_{j\to\infty} 
\mu\bigl(B(o,j)\cap A_\kappa\bigr)$, 
$$
\mu(A_\kappa) 
= \lim_{j\to\infty} \mu\bigl(A(j)_{\kappa}\bigr)
$$
by (\ref{f: limit II}) and (\ref{f: limit}).  Since $A(j)$ is a bounded
open set, we may conclude that
$$
\begin{aligned}
\mu(A_\kappa) 
& \geq \lim_{j\to\infty} C\, \kappa \, \mu \bigl(A(j)\bigr) \\
& =    C\, \kappa \, \mu \bigl(A\bigr),
\end{aligned}
$$
as required.
\end{remark}

\begin{remark}
The local doubling property is needed for all the results in this paper,
but many results in Sections~\ref{s: Geom prop}-\ref{s: Singular integrals}
depend only on some but not all the properties \rmi-\rmiii.  
In particular, Theorem~\ref{t: dyadic cubes} and 
Proposition~\ref{p: further prop}
require only the local doubling property,
Propositions~\ref{p: prop PP} and~\ref{p: covering}, Lemma~\ref{l: rdi}
and Theorem~\ref{t: basic},
which are key in proving
the interpolation result Theorem~\ref{t: interpolation},
require property (I), but not property (AMP), all the results in 
Sections~\ref{s: H1}, \ref{s: BMO} and \ref{s: duality} require property
(AMP) but not property (I). 
In particular, property (AMP) is key to prove the scale invariance
of the spaces $\hu{\mu}$ and $BMO(\mu)$ defined below 
(Proposition~\ref{p: decphi}).  
Finally, all the properties \rmi-\rmiii\ above are needed for 
the interpolation results in Section~\ref{s: sharp} and for the results
in Section~\ref{s: Singular integrals}.
\end{remark}

\begin{remark} \label{r: geom I}
The local doubling property implies that for each $\tau \geq 2$
and for each $b$ in $\BR^+$ 
there exists a constant~$C$ such that 
\begin{equation} \label{f: doubling D}
\mu\bigl(B'\bigr)
\leq C \, \mu(B)
\end{equation}
for each pair of balls $B$ and $B'$, with $B\subset B'$, 
$B$ in $\cB_b$, and $r_{B'}\leq \tau \, r_B$.
We shall denote by $D_{\tau,b}$ the smallest constant for 
which (\ref{f: doubling D}) holds. 
In particular, if (\ref{f: doubling D}) holds (with the same constant)
for all balls $B$ in $\cB$, then $\mu$ is doubling and 
we shall denote by $D_{\tau,\infty}$ the smallest constant for 
which (\ref{f: doubling D}) holds. 
\end{remark}

\begin{remark}
There are various ``structural constants'' which enter the proofs
of our results.  We have made the choice to keep track 
of these constants, which often appear explicitly in the statements.
For the reader's convenience we give here a list of all the relevant 
constants used is 
Sections~\ref{s: Geom prop}-\ref{s: Singular integrals}:
$$
\begin{array}{ll}
D_{\tau,b} & \qquad\hbox{$\tau \geq 2$, $b\in \BR^+\cup \{ \infty\}$ 
                   (see Remark~\ref{r: geom I})} \\
I_M        & \qquad\hbox{the isoperimetric constant (see \rmii\ above)} \\
\hbox{$R_0$ and $\be$}  & \qquad\hbox{appear in the (AMP)
                   property (see \rmiii\ above)} \\
\hbox{$\de$, $C_1$ and $a_0$}  & \qquad\hbox{appear 
                   in the construction of dyadic cubes
                   (see Thm~\ref{t: dyadic cubes})}.
\end{array}
$$
\end{remark}

\section{Preliminary results} \label{s: Preliminary}

Roughly speaking, if $M$ has property \PP, then
a fixed ratio of the measure of any bounded open set
is concentrated near its boundary.  The following proposition
contains a quantitative version of this statement.

\begin{proposition} \label{p: prop PP} 
The following hold:
\begin{enumerate}
\item[\itemno1]
the volume growth of $M$ is at least exponential;  
\item[\itemno2] for every bounded open set $A$
$$
\mu(A_t) \geq \bigl(1-\e^{-I_M t}\bigr) \, \mu(A)
\quant t \in \BR^+.
$$
\end{enumerate}
\end{proposition}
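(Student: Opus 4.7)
I would prove (ii) first and deduce (i) as a straightforward corollary. The strategy for (ii) is to iterate the isoperimetric inequality on the shrinking family of ``inner cores'' $B_n := A^{n\kappa}$ for a fixed small step $\kappa$, converting the linear lower bound in \eqref{f: IP} into a geometric decay of the complements, and then pass to a continuous exponential bound by refining $\kappa$.

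Fix a bounded open set $A$, a parameter $\tau\in(0,\kappa_0]$, and $\kappa\in(0,\tau]$. First I would observe that each $B_n$ is a bounded open set (hence \eqref{f: IP} applies to it): boundedness follows from $B_n\subseteq A$, and openness follows from the continuity of $x\mapsto \rho(x,A^c)$ together with the fact that $\rho(x,A^c)>0$ forces $x\in A$, since $A^c$ is closed. The geometric crux is the inclusion
$$
(B_n)_\kappa \;\subseteq\; B_n\setminus B_{n+1}.
$$
To prove it, given $x\in (B_n)_\kappa$, one picks $y\in B_n^c$ with $\rho(x,y)\leq \kappa+\vep$; noting that $B_n^c=\{z:\rho(z,A^c)\leq n\kappa\}$, the triangle inequality gives $\rho(x,A^c)\leq (n+1)\kappa+\vep$, and letting $\vep\to 0$ yields $x\notin B_{n+1}$. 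Applying \eqref{f: IP} to $B_n$ with constant $C_\tau$ (which is admissible because $\kappa\leq \tau$), one obtains
$$
\mu(B_{n+1})\;\leq\; \mu(B_n)-\mu\bigl((B_n)_\kappa\bigr)\;\leq\; (1-C_\tau\kappa)\,\mu(B_n),
$$
and iteration yields $\mu(B_n)\leq (1-C_\tau\kappa)^n\,\mu(A)$.

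Now fix $t>0$ and set $n=\lfloor t/\kappa\rfloor$. Since $n\kappa\leq t$ and $A^s$ is decreasing in $s$, we have $A^t\subseteq B_n$, whence $\mu(A^t)\leq (1-C_\tau\kappa)^{\lfloor t/\kappa\rfloor}\mu(A)$. Letting first $\kappa\to 0^+$ (with $\tau$ fixed) gives $\mu(A^t)\leq \e^{-C_\tau t}\mu(A)$ via the elementary limit $(1-x)^{1/x}\to\e^{-1}$, and then $\tau\to 0^+$, using \eqref{f: Ct decr}, yields $\mu(A^t)\leq \e^{-I_M t}\mu(A)$. Since $A$ is the disjoint union of $A_t$ and $A^t$, rearranging gives exactly (ii). For (i), I would specialize (ii) to $A=B(o,R)$ (for any reference point $o$) and observe that if $\rho(x,o)<R-t$, then for every $y$ with $\rho(y,o)\geq R$ we have $\rho(x,y)\geq \rho(y,o)-\rho(x,o)>t$, so $B(o,R-t)\subseteq B(o,R)^t$. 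Combined with (ii) this gives
$$
\mu\bigl(B(o,R)\bigr)\;\geq\; \e^{I_M t}\,\mu\bigl(B(o,R-t)\bigr),
$$
which, iterated with integer increments, produces exponential volume growth.

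The main obstacle is the geometric inclusion $(B_n)_\kappa\subseteq B_n\setminus B_{n+1}$: one must correctly decompose $B_n^c$ as $A^c\cup \{x\in A:\rho(x,A^c)\leq n\kappa\}$ and invoke the triangle inequality with care (including the $\vep$-approximation, since infima in a general metric space need not be attained). Once this is in hand, the remainder is a routine passage to the limit that combines the exponential formula $(1-C_\tau\kappa)^{t/\kappa}\to \e^{-C_\tau t}$ with the convergence $C_\tau\to I_M$ as $\tau\to 0^+$.
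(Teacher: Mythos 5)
Your proof is correct and follows essentially the same strategy as the paper: apply the isoperimetric inequality to the inner cores $A^s$ via the inclusion $(A^s)_\kappa\subseteq A^s\setminus A^{s+\kappa}$, and propagate this to exponential decay of $s\mapsto\mu(A^s)$. The one technical difference is that you discretize (iterating $\mu(B_{n+1})\leq(1-C_\tau\kappa)\mu(B_n)$ and passing to the limit $\kappa\to 0^+$, then $\tau\to 0^+$) whereas the paper converts the same one-step estimate into a differential inequality $\frac{\mathrm d}{\mathrm d s}\mu(A^s)\leq -I_M\,\mu(A^s)$, invoking the a.e.\ differentiability of the monotone function $s\mapsto\mu(A^s)$; your discrete route avoids that measurability appeal at the cost of a slightly longer limiting argument. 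You also derive (i) from (ii) by specializing to $A=B(o,R)$, while the paper proves (i) directly with a separate (but entirely parallel) iteration on $V_r$; your organization is a mild streamlining but does not change the substance.
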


\begin{proof}
First we prove \rmi. 
Denote by $o$ a reference point in $M$.
For every $r>0$ denote by~$V_r$ the measure of the ball with
centre $o$ and radius $r$.
It is straightforward to check that 
$B(o,r)_\kappa \subset B(o,r)\setminus B(o,r-\kappa)$,
so that for all sufficiently large $r$
$$
\begin{aligned}
V_r-V_{r-\kappa} 
& \geq \mu\bigl(B(o,r)_\kappa\bigr) \\
& \geq C\, \kappa \, V_r
\quant \kappa \in (0,\kappa_0]
\end{aligned}
$$
by property \PP\ ($C$ and $\kappa_0$ are as in (\ref{f: IP})).  Hence
$$
\begin{aligned}
V_r 
& \geq C\, \kappa \, V_r + V_{r-\kappa} \\
& \geq \eta \, V_{r-\kappa}
\quant \kappa \in (0,\kappa_0],
\end{aligned}
$$
where $\eta=C\kappa + 1$. 
Denote by $n$ the positive integer for which $r-n\kappa$ 
is in $[\kappa,2\kappa)$.  Then 
$$
\begin{aligned}
V_{r}
& \geq \eta^n \, V_{r-n\kappa} \\
& \geq \eta^{r/\kappa-2} \, V_{\kappa}, 
\end{aligned}
$$
as required.

Now we prove \rmii.  Suppose that $A$ is a bounded open subset of $M$,
and that $t$ is in $(0,\kappa_0]$.  Note that $A^s$ is a bounded
open subset of $M$.  It is straightforward to check that
$(A^s)_t \subset A^s\setminus A^{s+t}$.
Therefore
$$
\begin{aligned}
\mu(A^{s+t}) - \mu(A^{s})
& \leq  - \mu\bigl((A^{s})_t\bigr) \\
& \leq  - C_t \, t \, \mu(A^s) 
\end{aligned}
$$
by property \PP\ (see (\ref{f: IP}) above).  
Since $s\mapsto \mu(A^s)$ is monotonic,
it is differentiable almost everywhere.
The inequality above and (\ref{f: Ct decr}) 
imply that for almost every $s$ in $\BR^+$
$$
\frac{\textrm{d}}{\textrm{d} s} \mu(A^s)
\leq -I_M \, \mu(A^s).
$$
Notice also that $\lim_{s\to 0^+} \mu(A^s) = \mu(A)$.  Therefore 
$$
\mu(A^s) \leq \e^{-I_M \, s} \, \mu(A)
\quant s \in \BR^+,
$$
and finally
$$
\mu(A_s) \geq \bigl(1-\e^{-I_M \, s}\bigr) \, \mu(A) \quant s \in \BR^+,
$$
as required. 
\end{proof}

We shall make use of the analogues in our setting
of the so-called dyadic cubes $Q_\al^k$
introduced by G.~David and M.~Christ \cite{Da,Ch} on spaces of
homogeneous type.  It may help to think of $Q_\al^k$
as being essentially a cube of diameter $\de^k$ with centre $z_\al^k$. 

\begin{theorem} \label{t: dyadic cubes}
There exist a collection of open subsets $\{Q_\al^k: k \in \BZ, \al \in I_k\}$
and constants $\de$ in $(0,1)$, $a_0$, $C_1$ in $\BR^+$ such that
\begin{enumerate}
\item[\itemno1]
$\bigcup_{\al} Q_\al^k$ is a set of full measure in $M$
for each $k$ in $\BZ$;
\item[\itemno2]
if $\ell \geq k$, then either $Q_\be^\ell \subset Q_\al^k$ or
$Q_\be^\ell \cap Q_\al^k = \emptyset$;
\item[\itemno3]
for each $(k,\al)$ and each $\ell < k$ there is a unique $\be$
such that $Q_\al^k \subset Q_\be^\ell$;
\item[\itemno4]
$\diam (Q_\al^k) \leq C_1 \, \de^k$;
\item[\itemno5]
$Q_\al^k$ contains the ball $B(z_\al^k, a_0\, \de^k)$.
\end{enumerate}
\end{theorem}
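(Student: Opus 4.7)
The plan is to follow the dyadic cube construction of G.~David \cite{Da} and M.~Christ \cite{Ch}, originally formulated for spaces of homogeneous type. The construction is essentially geometric, and the local doubling property will enter only peripherally, to guarantee countability of the index sets $I_k$ and $\sigma$-finiteness of $M$.

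First, I would fix a small parameter $\delta \in (0,1)$ and, for each $k \in \BZ$, construct by Zorn's lemma a maximal $\delta^k$-separated subset $\{z_\alpha^k\}_{\alpha \in I_k}$ of $M$. Maximality forces $M \subset \bigcup_\alpha B(z_\alpha^k, \delta^k)$, while the balls $B(z_\alpha^k, \delta^k/2)$ are pairwise disjoint. Countability of $I_k$ then follows because each such disjoint ball has strictly positive finite $\mu$-measure (by local doubling combined with $\mu(M) > 0$) and $M$ is $\sigma$-finite. Next, I would set up a tree structure: for each $\alpha \in I_{k+1}$, fix a parent $\pi(\alpha) \in I_k$ with $\rho(z_\alpha^{k+1}, z_{\pi(\alpha)}^k) < \delta^k$, breaking ties by a fixed rule; this yields an iterated ancestor map $\pi^{\ell-k}\colon I_\ell \to I_k$ for all $\ell \geq k$. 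Each cube $Q_\alpha^k$ will be defined essentially as the interior of the union, over its descendants $\beta$ at all scales $\ell \geq k$, of a small ball centred at $z_\beta^\ell$. Properties (i), (ii), and (iii) will then be direct consequences of the covering property of the nets and the tree structure of $\pi$, while (iv) will follow from a geometric series argument via the triangle inequality, yielding $\rho(z_\beta^\ell, z_\alpha^k) \leq \delta^k/(1-\delta)$ whenever $\pi^{\ell-k}(\beta) = \alpha$, and hence $\operatorname{diam}(Q_\alpha^k) \leq C_1 \delta^k$ for a suitable $C_1 = C_1(\delta)$.

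The hardest part will be (v): producing a positive constant $a_0$ with $B(z_\alpha^k, a_0 \delta^k) \subset Q_\alpha^k$. My plan is, for any $x$ with $\rho(x, z_\alpha^k) < a_0 \delta^k$, to exhibit a chain of descendants $z_\alpha^k = z_{\alpha_0}^k, z_{\alpha_1}^{k+1}, z_{\alpha_2}^{k+2}, \ldots$ of $\alpha$ with $\alpha_{j+1} \in \pi^{-1}(\alpha_j)$ and with $x$ lying in a small ball around each $z_{\alpha_j}^{k+j}$. Choosing $\delta$ sufficiently small (say $\delta < 1/4$) and $a_0$ correspondingly small, the triangle inequality will show that the closest point in the finer net to $x$ must be a descendant of $z_\alpha^k$; the constants close up in a finicky but routine arithmetic. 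The entire argument is purely combinatorial and geometric, so it transplants to the present setting with only the countability remark above as a substitute for the global doubling hypothesis used in \cite{Ch,Da}.
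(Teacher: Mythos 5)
Your proposal is correct and follows essentially the same route as the paper, which simply cites Christ's construction \cite{Ch} and notes that it transfers verbatim because only local doubling is used (for the full-measure statement (i)) and the rest is purely metric—indeed easier here since $\rho$ is a genuine metric rather than a quasi-metric. Your sketch of nets, the parent map, the descendant-union definition of cubes, and the observation that local doubling enters only to ensure countability/$\sigma$-finiteness matches the paper's remarks precisely.
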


\begin{proof}
The proof of \rmii--\rmv\ is as in \cite{Ch}.  In fact,
the proof depends only on the metric structure of the space
and not on the properties of the measure $\mu$ and is even easier
in our case, because $\rho$ is a genuine distance, 
rather than a quasi-distance.

The proof of \rmi\ is again as in \cite{Ch}; observe that only
a local doubling property is used in the proof.
\end{proof}

Note that \rmiv\ and \rmv\ imply that for every integer $k$
and each $\al$ in $I_k$
$$
B(z_{\al}^k, a_0 \, \de^k)
\subset Q_{\al}^k
\subset B(z_{\al}^k, C_1 \, \de^k).
$$
\begin{remark}
When we use dyadic cubes, we implicitly assume that for each
$k$ in $\BZ$ the set $M \setminus \bigcup_{\al \in I_k} Q_{\al}^k$
has been permanently deleted from the space.
\end{remark}

We shall denote by $\cQ^k$ the class of
all dyadic cubes of ``resolution'' $k$, i.e., the family of
cubes $\{Q_{\al}^k: \al \in I_k\}$, and by $\cQ$ 
the set of all dyadic cubes.
We shall need the following additional properties of dyadic cubes.

\begin{proposition} \label{p: further prop}  
Suppose that $b$ is in $\BR^+$ and that $\nu$ is in $\BZ$, and let 
$C_1$ and $\de$ are as in Theorem~\ref{t: dyadic cubes}.  
The following hold:
\begin{enumerate}
\item[\itemno1]
suppose that $Q$ is in $\cQ^k$ for some $k\geq \nu$, and that
$B$ is a ball such that $c_B \in Q$. 
If $r_B \geq C_1\, \de^k$, then  
\begin{equation} \label{f: ineq II}
\mu(B\cap Q) = \mu(Q);
\end{equation}
if $r_B < C_1\, \de^k$, then  
\begin{equation} \label{f: ineq I}
\mu(B\cap Q) 
\geq D_{C_1/(a_0\de),\de^\nu}^{-1} \, \mu(B);
\end{equation}
\item[\itemno2]
suppose that $\tau$ is in $[2,\infty)$.  For each $Q$ in $\cQ$ the 
space $\bigl(Q,\rho_{\vert Q}, \mu_{\vert Q}\bigr)$
is of homogeneous type.  Denote by $D_{\tau,\infty}^Q$ 
its doubling constant (see Remark~\ref{r: geom I} for the definition).  
Then
$$
\sup \, \Bigl\{D_{\tau,\infty}^Q:  Q \in \bigcup_{k=\nu}^\infty \cQ^k \Bigr\}
\leq D_{\tau, C_1\de^\nu}\, D_{C_1/(a_0\de), \de^\nu};
$$ 
\item[\itemno3]
for each ball $B$ in $\cB_b$, let $k$ be the integer
such that $\de^k \leq r_B < \de^{k-1}$, and
and let~$\wt B$ denote the ball with
centre $c_B$ and radius $\bigl(1+C_1\bigr)\, r_B$. 
Then $\wt B$ contains all dyadic cubes in $\cQ^k$ that
intersect $B$ and 
$$
\mu(\wt B) \leq D_{1+C_1,b} \, \mu(B);
$$
\item[\itemno4]
suppose that $B$ is in $\cB_b$, and that $k$ is an integer such that
$\de^k \leq r_B < \de^{k-1}$.  Then there are at most 
$D_{(1+C_1)/(a_0\de),b}$ dyadic cubes in $\cQ^k$ that intersect $B$.
\end{enumerate}
\end{proposition}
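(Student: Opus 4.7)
My plan is to prove the four parts in order, since (ii)--(iv) follow without much effort from part (i) and the local doubling condition; the main obstacle is the bound (\ref{f: ineq I}) in part (i). The identity (\ref{f: ineq II}) is immediate: when $r_B \geq C_1\de^k$, Theorem~\ref{t: dyadic cubes}(iv) gives $\diam(Q) \leq C_1\de^k \leq r_B$, and since $c_B \in Q$ this forces $Q \subset B$, whence $\mu(B \cap Q) = \mu(Q)$.

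The heart of the proof is (\ref{f: ineq I}), where $r_B < C_1\de^k$. The key idea is to locate a dyadic subcube of $Q$ that lies entirely inside $B$ and whose inscribed ball is comparable in size to $B$. I would take $m$ to be the smallest integer with $C_1\de^m < r_B$, which forces $m > k \geq \nu$ and $\de^m \geq \de\, r_B / C_1$. By the convention following Theorem~\ref{t: dyadic cubes} that the null sets $M \setminus \bigcup_\al Q_\al^m$ have been deleted, $c_B$ lies in a unique $Q_\be^m$; by Theorem~\ref{t: dyadic cubes}(ii) this cube satisfies $Q_\be^m \subset Q$, and by (iv) of the same theorem $\diam(Q_\be^m) \leq C_1\de^m < r_B$, so every $y \in Q_\be^m$ lies in $B$. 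Hence $Q_\be^m \subset B \cap Q$ and
$$
\mu(B \cap Q) \geq \mu\bigl(B(z_\be^m, a_0\de^m)\bigr).
$$
On the other hand the triangle inequality gives $B \subset B(z_\be^m, r_B + C_1\de^m)$, and the ratio of the outer radius to $a_0\de^m$ is controlled by a constant multiple of $C_1/(a_0\de)$ thanks to the lower bound $\de^m \geq \de\, r_B/C_1$. Since $a_0\de^m \leq \de^\nu$, applying the local doubling property at this scale converts the inclusion into (\ref{f: ineq I}). The principal delicacy lies in calibrating $m$ so that $Q_\be^m$ is simultaneously small enough to fit inside $B$ and large enough to be comparable to $B$; the specific constant $C_1/(a_0\de)$ in the statement comes out of this trade-off.

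For part (ii), the balls in $(Q,\rho_{\vert Q}, \mu_{\vert Q})$ are the intersections $B(x,r)\cap Q$ with $x \in Q$. When $r \geq C_1\de^k$, part (i) shows $B(x,r)\cap Q = Q$ and doubling is trivial; otherwise (\ref{f: ineq I}) bounds $\mu(B(x,r)\cap Q)$ from below by $D_{C_1/(a_0\de),\de^\nu}^{-1}\mu(B(x,r))$, while local doubling (valid since $r \leq C_1\de^\nu$) gives $\mu(B(x,\tau r)\cap Q) \leq \mu(B(x,\tau r)) \leq D_{\tau, C_1\de^\nu}\mu(B(x,r))$, and multiplying yields the stated product constant. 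For part (iii), given $Q\in\cQ^k$ meeting $B$ at some point $y$ and any $z\in Q$, Theorem~\ref{t: dyadic cubes}(iv) together with the triangle inequality yield
$$
\rho(z,c_B) \leq \diam(Q) + \rho(y,c_B) < C_1\de^k + r_B \leq (1+C_1)r_B,
$$
where we used $\de^k \leq r_B$, so $Q \subset \wt B$; the measure bound then follows from local doubling applied to $B \subset \wt B$ with $r_B \leq b$.

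Finally, for part (iv), the cubes in $\cQ^k$ meeting $B$ are pairwise disjoint and, by (iii), all contained in $\wt B$, and each contains the inscribed ball $B(z_\al^k, a_0\de^k)$ by Theorem~\ref{t: dyadic cubes}(v). Using $\de^k \leq r_B < \de^{k-1}$, the triangle inequality bounds $\rho(c_B, z_\al^k)$ by $(1+C_1)r_B$, so one verifies that $\wt B$ is contained in a ball about each $z_\al^k$ whose radius relative to $a_0\de^k$ is controlled by $(1+C_1)/(a_0\de)$; local doubling at scale $b$ then yields $\mu(\wt B) \leq D_{(1+C_1)/(a_0\de),b}\mu(B(z_\al^k, a_0\de^k))$ for every such $\al$. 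Since the inscribed balls are pairwise disjoint and contained in $\wt B$, additivity of measure gives the stated upper bound on their number.
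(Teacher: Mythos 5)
Your overall strategy matches the paper's proof exactly: (\ref{f: ineq II}) by diameter comparison; (\ref{f: ineq I}) by locating a dyadic subcube $Q_\be^m \subset B\cap Q$ of comparable scale (your $m$ is the paper's $j$) and comparing via its inscribed ball; part (ii) by combining (i) with local doubling; part (iii) by the triangle inequality; part (iv) by a pigeonhole argument with the disjoint inscribed balls. However, in both (i) and (iv) you introduce an unnecessary recentering that produces constants strictly worse than those claimed in the statement, and since the whole point of this proposition (and of Remark~\ref{r: geom I}) is to track these constants explicitly, this is a real discrepancy.

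Specifically, Remark~\ref{r: geom I} formulates the consequence of local doubling for any \emph{nested} pair of balls $B\subset B'$ with $r_{B'}\leq\tau\, r_B$ -- they need not be concentric. In (i) you should therefore simply note that $B(z_\be^m,a_0\de^m)\subset B$, and since $r_B/(a_0\de^m)\leq C_1/(a_0\de)$ and $a_0\de^m\leq \de^\nu$, Remark~\ref{r: geom I} gives directly $\mu(B)\leq D_{C_1/(a_0\de),\de^\nu}\,\mu\bigl(B(z_\be^m,a_0\de^m)\bigr)$, which is precisely (\ref{f: ineq I}). Instead you enclose $B$ in the concentric ball $B(z_\be^m, r_B+C_1\de^m)$, whose radius compared to $a_0\de^m$ is $\leq C_1(1+\de)/(a_0\de)$, not $C_1/(a_0\de)$; this yields the weaker bound with $D_{C_1(1+\de)/(a_0\de),\de^\nu}^{-1}$. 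The same issue recurs in (iv): the paper uses $B_j'\subset \wt B$ with $r_{\wt B}/r_{B_j'}<(1+C_1)/(a_0\de)$ directly, whereas your route through a ball about $z_\al^k$ that contains $\wt B$ gives a ratio up to $2(1+C_1)/(a_0\de)$, so the bound you actually derive is $D_{2(1+C_1)/(a_0\de),b}$, not $D_{(1+C_1)/(a_0\de),b}$. Parts (ii) and (iii) are fine as you have them. To match the statement exactly, drop the recentering in (i) and (iv) and apply Remark~\ref{r: geom I} to the non-concentric inclusions.
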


\begin{proof}
First we prove \rmi.
Our proof is a version of the proof
given by Christ \cite[p.~613]{Ch} that keeps track
of the various structural constants involved.

First we prove (\ref{f: ineq II}).
By Theorem~\ref{t: dyadic
cubes}~\rmiv\ the diameter of $Q$
is at most $C_1\, \de^k$, so that $Q \subset B$, whence $B\cap Q = Q$,
and the required formula is obvious.

To prove (\ref{f: ineq I}),
denote by $j$ the unique integer such that
$$
\de^j < \frac{r_B}{C_1} \leq \de^{j-1}
$$
and by $Q_\be^j$ the unique dyadic
cube of resolution $j$ that contains $c_B$.  Then $j\geq k$, because
$C_1 \, \de^j< r_B \leq C_1 \de^k$.
The cubes $Q_\be^j$ and $Q$ have nonempty 
intersection, because, they both contain $c_B$.
Thus $Q_\be^j \subset Q$.  By Theorem~\ref{t: dyadic
cubes}~\rmiv\ the diameter of $Q_\be^j$
is at most $C_1\, \de^j$, which is $< r_B$ by the definition of $j$,
so that $Q_\be^j \subset B$.
Therefore $Q_\be^j \subset B\cap Q$, and
$$
\begin{aligned}
\mu(B\cap Q)
& \geq \mu\bigl(Q_\be^j\bigr) \\
& \geq \mu\bigl(B(z_\be^j,a_0\, \de^j)\bigr). 
\end{aligned}
$$
Observe that
$
\smallfrac{r_B}{a_0\, \de^j}
\leq \smallfrac{C_1}{a_0\, \de}
$ 
and that $B(z_\be^j,a_0\, \de^j) \subset B$.
Hence 
$$
\mu\bigl(B(z_\be^j,a_0\, \de^j)\bigr) 
\geq D_{C_1/(a_0\de),\de^\nu}^{-1} \, \mu(B), 
$$
as required to conclude the proof of (\ref{f: ineq I}), and of \rmi.

Next we prove \rmii.
Suppose that $Q$ is a dyadic cube in $\cQ^k$,
with $k \geq \nu$.  Suppose that $B$ and $B'$ are balls in $\cB$
with $B\subset B'$
such that $c_B$ and $c_{B'}$ belong to $Q$ and $r_{B'} \leq \tau \, r_B$.  
We treat the cases where $C_1\, \de^k \leq r_B$ and $r_B < C_1 \, \de^k$
separately.

If $C_1\, \de^k \leq r_B$, then $\mu(B'\cap Q) = \mu(Q) = \mu(B\cap Q)$.

If $r_B < C_1 \, \de^k$, then 
$$
\begin{aligned}
\mu(B'\cap Q)
& \leq \mu(B') \\
& \leq D_{\tau, C_1\de^k} \, \mu(B) \\
& \leq D_{\tau, C_1\de^k}\, D_{C_1/(a_0\de), \de^\nu} \, \mu(B\cap Q),
\end{aligned}
$$ 
by the local doubling property of $M$ and (\ref{f: ineq I}).
Therefore $Q$ is a homogeneous space with doubling constant
at most $D_{\tau, C_1\de^k}\, D_{C_1/(a_0\de), \de^\nu}$.
Since $k\geq \nu$, these doubling constants are dominated by
$D_{\tau, C_1\de^\nu}\, D_{C_1/(a_0\de), \de^\nu}$,
as required.

Now we prove \rmiii.  
Denote by $Q$ a cube
in $\cQ^k$ that intersects $B$.  By the triangle inequality
and Theorem~\ref{t: dyadic cubes}~\rmiv, $Q$  
is contained in $\wt B$.  The required estimate of
the measure of $\wt B$ follows from the local doubling condition
(see Remark~\ref{r: geom I}).

Finally we prove \rmiv.
Denote by $Q_1,\ldots, Q_N$ the cubes
in $\cQ^k$ that intersect $B$.  By \rmiii\ 
each of these cubes is contained in $\wt B$.
By Theorem~\ref{t: dyadic cubes}~\rmv\
each cube $Q_j$ contains a ball, $B_j'$ say, of radius~$a_0\, \de^k$,
and these balls are pairwise disjoint because they are contained
in disjoint dyadic cubes.
By the local doubling condition $\mu(\wt B) \leq D_{(1+C_1)/(a_0\de),b} \, \mu(B_j')$
for all $j$.  Therefore
$$
\begin{aligned}
N \, \mu(\wt B)
& \leq D_{(1+C_1)/(a_0\de),b} \, \sum_{j=1}^N \mu(B_j') \\
& =    D_{(1+C_1)/(a_0\de),b} \, \mu \Bigl(\bigcup_{j=1}^N B_j' \Bigr) \\
& \leq D_{(1+C_1)/(a_0\de),b} \, \mu(\wt B),
\end{aligned}
$$
from which the desired estimate follows.
\end{proof}

Our next result is a covering property enjoyed by 
spaces with property \PP.  It is key in proving 
relative distributional inequalities for the sharp maximal operator
(see Lemma~\ref{l: rdi} below).

\begin{proposition} \label{p: covering}
Suppose that $\nu$ is an integer.
For every $\kappa$ in $\BR^+$, every 
open subset $A$ of~$M$ of finite measure and every collection~$\cC$ of
dyadic cubes of resolution at least $\nu$ 
such that $\bigcup_{Q\in \cC} Q =A$, there exist
mutually disjoint cubes $Q_1,\ldots,Q_k$ in $\cC$ such that
\begin{enumerate}
\item[\itemno1]
$\sum_{j=1}^k \mu (Q_j) \geq \bigl(1-\e^{-I_M\, \kappa}\bigr)   
\, \mu (A)/2$;
\item[\itemno2]
$ \rho (Q_j, A^c) \leq \kappa$
for every $j$ in $\{1,\ldots, k\}$.
\end{enumerate}
\end{proposition}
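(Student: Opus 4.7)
The plan is to invoke the isoperimetric consequence of Proposition~\ref{p: prop PP}~\rmii, which guarantees that the ``boundary layer''
\[
A_\kappa = \bigl\{x\in A : \rho(x,A^c)\leq \kappa\bigr\}
\]
has measure at least $(1-\e^{-I_M\kappa})\,\mu(A)$, and then to extract from $\cC$ a finite disjoint subfamily whose union captures (up to a factor of $1/2$) the mass of $A_\kappa$, while consisting only of cubes that already lie within distance $\kappa$ of $A^c$.

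The key construction is the subcollection $\cC'=\{Q\in \cC : Q\cap A_\kappa\neq\emptyset\}$. For any $Q\in \cC'$, a point $x\in Q$ with $\rho(x,A^c)\leq \kappa$ immediately gives $\rho(Q,A^c)\leq \kappa$, so every cube of $\cC'$ automatically satisfies~\rmii. Moreover, since $\bigcup_{Q\in\cC}Q=A$, every $x\in A_\kappa$ lies in some $Q\in \cC$, and this $Q$ then meets $A_\kappa$ and hence belongs to $\cC'$; consequently $A_\kappa\subset \bigcup_{Q\in \cC'}Q$.

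To obtain a disjoint subfamily I would pass to maximal cubes: let $\cM$ denote the set of those $Q\in \cC'$ not properly contained in any other cube of $\cC'$. Because the cubes in $\cC$ have resolution at least $\nu$, the chain of dyadic ancestors at resolution $\geq \nu$ of any $Q\in \cC'$ is finite (the ancestors being unique by Theorem~\ref{t: dyadic cubes}~\rmiii), so every $Q\in \cC'$ is contained in some element of $\cM$. Distinct maximal cubes are disjoint by the nestedness property Theorem~\ref{t: dyadic cubes}~\rmii, so $\bigcup_{Q\in \cM}Q=\bigcup_{Q\in \cC'}Q\supset A_\kappa$ is a disjoint union, and
\[
\sum_{Q\in\cM}\mu(Q)=\mu\Bigl(\bigcup_{Q\in\cM}Q\Bigr)\geq \mu(A_\kappa)\geq \bigl(1-\e^{-I_M\kappa}\bigr)\,\mu(A).
\]
Since $\mu(A)<\infty$ and the cubes of $\cM$ are disjoint subsets of $A$, the family $\cM$ is at most countable with finite total measure, so a suitably large finite subcollection $Q_1,\ldots,Q_k\in \cM$ satisfies $\sum_{j=1}^k\mu(Q_j)\geq \frac12(1-\e^{-I_M\kappa})\,\mu(A)$, establishing~\rmi.

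No step looks like a serious obstacle: the geometric content is already encapsulated in Proposition~\ref{p: prop PP}~\rmii, while the remainder is a clean application of the nesting and ancestor structure of the Christ--David dyadic cubes combined with finiteness of $\mu(A)$. The one point that warrants care is verifying the existence of maximal ancestors within $\cC'$, which is what forces us to assume a uniform lower bound $\nu$ on the resolution of the cubes in $\cC$.
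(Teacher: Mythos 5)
Your proof is correct and follows essentially the same route as the paper: restrict to the subcollection of cubes meeting $A_\kappa$ (which automatically satisfy~\rmii), extract the maximal cubes using the bounded-from-below resolution to guarantee their existence, observe they are pairwise disjoint and cover $A_\kappa$, invoke Proposition~\ref{p: prop PP}~\rmii\ to bound $\mu(A_\kappa)$ from below, and finally pass to a finite subfamily capturing half the total measure. The only cosmetic difference is that you spell out the finiteness of ancestor chains via Theorem~\ref{t: dyadic cubes}~\rmiii, which the paper leaves implicit.
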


\begin{proof}
Denote by $\wt\cC$ the subcollection of all cubes in $\cC$
that intersect $A_{\kappa}$.  Clearly the cubes in $\wt\cC$ cover $A_{\kappa}$
and satisfy \rmii.

Next we prove \rmi.
Since two dyadic cubes are either disjoint or 
contained one in the other, we may consider the sequence
$\{Q_j\}$ of cubes in $\wt \cC$
which are not contained in any other cube of $\wt\cC$.  
The existence of these ``maximal'' cubes is
guaranteed by the assumption that the resolution of
the cubes in $\cC$ is bounded from below.  The cubes
$\{Q_j\}$ are mutually disjoint and cover $A_\kappa$.  Therefore
\begin{equation} \label{f: A tilde}
\begin{aligned}
\mu(A)
& \geq \sum_{j} \mu (Q_j) \\
& \geq \mu \bigl(A_{\kappa}\bigr) \\
& \geq \bigl(1-\e^{-I_M\, \kappa}\bigr)\, \mu  (A),
\end{aligned}
\end{equation}
where the last inequality holds because
$M$ possesses property \PP.
To conclude the proof of~\rmi\ take $k$ so large that
$\sum_{j=1}^k \mu (Q_j)\geq (1/2) \sum_{j=1}^\infty \mu (Q_j)$.  Then
$$
\sum_{j=1}^k \mu (Q_j)
\geq \bigl(1-\e^{-I_M\, \kappa}\bigr) \, \mu (A)/2,
$$
as required.
\end{proof}

\section{The Hardy space $H^1$} \label{s: H1}

\begin{definition} \label{d: standard atom}
Suppose that $r$ is in $(1,\infty]$.  A $(1,r)$-\emph{atom} $a$
is a function in $\lu{\mu}$ supported in a ball $B$ in $\cB$
with the following properties:
\begin{enumerate}
\item[\itemno1]
$\norm{a}{\infty}  \leq \mu (B)^{-1}$
if $r$ is equal to $\infty$ and
$$
\Bigl(\frac{1}{\mu (B)} \int_B \mod{a}^r \wrt{\mu } \Bigr)^{1/r}
\leq \mu (B)^{-1}
$$
if $r$ is in $(1,\infty)$;
\item[\itemno2]
$\ds \int_B a \wrt \mu  = 0$.
\end{enumerate}
\end{definition}

\begin{definition} \label{d: Hardy}
Suppose that $b$ is in $\BR^+$.  
The \emph{Hardy space} $H_b^{1,r}({\mu})$ is the 
space of all functions~$g$ in $\lu{\mu}$
that admit a decomposition of the form
\begin{equation} \label{f: decomposition}
g = \sum_{k=1}^\infty \la_k \, a_k,
\end{equation}
where $a_k$ is a $(1,r)$-atom \emph{supported in a ball $B$ of $\cB_b$},
and $\sum_{k=1}^\infty \mod{\la_k} < \infty$.
The norm $\norm{g}{H_b^{1,r}({\mu})}$
of $g$ is the infimum of $\sum_{k=1}^\infty \mod{\la_k}$
over all decompositions (\ref{f: decomposition})
of $g$.  
\end{definition}

Clearly a function in $H_c^{1,r}(\mu)$ is in $H_b^{1,r}(\mu)$
for all $c<b$.  We shall prove in Proposition~\ref{p: decphi} below
that, in fact, the reverse
inclusion holds whenever $c$ is large enough.  
Hence for $b$ large the space $H_{b}^{1,r}(\mu)$
does not depend on the parameter~$b$, and for each pair of sufficiently large
parameters $b$ and~$c$ the norms $\norm{\cdot}{H_b^{1,r}(\mu)}$
and $\norm{\cdot}{H_c^{1,r}(\mu)}$ are equivalent.

There are cases where 
$H_c^{1,r}(\mu)$ and $H_b^{1,r}(\mu)$ are isomorphic spaces
for each pair of parameters $c$ and $b$.  This happens, for
instance, if $M$ is the upper half plane, $\rho$
the Poincar\'e metric and $\mu$ the associated Riemannian measure.
However, if $M$ is a homogeneous tree of degree $q\geq 1$,
$\rho$ denotes the natural distance and $\mu$ the counting measure, 
it is straightforward to check that $H_1^1(\mu)$ consists of the null
function only, whereas $H_2^1(\mu)$ is a much richer space. 

Recall that $R_0$ and $\be$ are the constants which appear in the definition
of the (AMP) property.

\begin{proposition}\label{p: decphi} 
Suppose that $r$ is in $(1,\infty]$, 
$b$ and $c$ are in $\BR^+$ and satisfy $R_0/(1-\be)<c<b$.
The following hold:
\begin{enumerate}
\item[\itemno1]
there exist a constant $C$ and a nonnegative integer $N$, 
depending only on $M$, $b$ and $c$, such that for each 
ball $B$ in $\cB_b$ and each $(1,r)$-atom $a$ supported in
$B$ there exist at most $N$ atoms $a_1,\ldots,a_N$
with supports contained in balls $B_1,\ldots,B_N$ in $\cB_{c}$ 
and $N$ constants $\la_1,\ldots,\la_N$ such that 
$\mod{\la_j}\leq C$,
$$
a
= \sum_{j=1}^N \la_j \, a_j
\qquad\hbox{and}\qquad
\norm{a}{H_c^{1,r}(\mu)}
\le C \, N;
$$
\item[\itemno2]
a function $f$ is in $H_{c}^{1,r}(\mu)$ if and only
if $f$ is in $H_{b}^{1,r}(\mu)$.  Furthermore,
there exists a constant $C$ such that  
$$
\norm{f}{H_{b}^{1,r}(\mu)} 
\leq \norm{f}{H_{c}^{1,r}(\mu)} 
\leq C \, \norm{f}{H_{b}^{1,r}(\mu)} 
\quant f \in H_{c}^{1,r}(\mu).
$$
\end{enumerate}
\end{proposition}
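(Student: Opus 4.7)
Part (ii) follows at once from (i). If $f = \sum_k \lambda_k a_k$ is a near-optimal decomposition in $H_b^{1,r}(\mu)$, applying (i) to each $a_k$ rewrites $f$ as a linear combination of $(1,r)$-atoms on balls of $\cB_c$ with total coefficient mass bounded by $CN\,\sum_k \abs{\lambda_k}$, so $\norm{f}{H_c^{1,r}(\mu)} \le CN\,\norm{f}{H_b^{1,r}(\mu)}$. The reverse inequality is trivial since $\cB_c \subset \cB_b$.

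\textbf{Setup for (i).} Let $a$ be a $(1,r)$-atom on $B \in \cB_b$; the case $r_B \le c$ gives $N=1$, so assume $r_B > c$. The assumption $c > R_0/(1-\be)$ together with $\be > 1/2$ forces $R_0 < c/2$. Choose a maximal $(c/2)$-separated set $\{x_1,\dots,x_N\} \subset B$; by maximality the balls $B_j := B(x_j,c/2) \in \cB_c$ cover $B$, and the local doubling property applied to the disjoint packing $B(x_j, c/4) \subset 2B$ bounds $N$ by a constant $N_0 = N_0(M,b,c)$. Set $A_j := B_j \cap (B \setminus \bigcup_{i<j} B_i)$, $\varphi_j := \chi_{A_j}$, and $m_j := \int a\varphi_j \wrt\mu$, and split
\[
a \,=\, \sum_{j=1}^N \bigl(a\varphi_j - m_j\,\chi_{B_j}/\mu(B_j)\bigr) + \sum_{j=1}^N m_j\,\chi_{B_j}/\mu(B_j).
\]
Each summand in the first sum has zero integral, lies in $B_j \in \cB_c$, and by the local doubling property (which compares $\mu(B)$ to $\mu(B_j)$) together with $\abs{m_j} \le \norm{a}{1} \le 1$ has $L^\infty$-norm bounded by a constant times $1/\mu(B_j)$; so it is a uniformly bounded multiple of a $(1,r)$-atom on $B_j$.

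\textbf{Chaining step.} Since $\sum_j m_j = \int a \wrt \mu = 0$, the second sum equals $\sum_{j=2}^N m_j\bigl[\chi_{B_j}/\mu(B_j) - \chi_{B_1}/\mu(B_1)\bigr]$. For each $j\ge 2$ I construct a chain $x_1 = y_0, y_1, \dots, y_{M_j} = x_j$ with $\rho(y_\ell,y_{\ell+1}) \le R_0$ by iterated bisection: start from the pair $(x_1,x_j)$, noting $\rho(x_1,x_j) \le 2b$, and whenever a consecutive pair in the current chain has separation exceeding $R_0$, insert the approximate midpoint supplied by \EB. The maximum spacing after $k$ rounds is at most $\be^k\cdot 2b$, so a number of rounds bounded by $\lceil \log(2b/R_0)/\log(1/\be)\rceil$ suffices (if $R_0 = 0$, iterate instead until $\be^k \cdot 2b \le c/2$), producing a chain of length $M_j \le 2^k =: M_0$ depending only on $b,c,\be,R_0$. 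Writing $\tilde B_\ell := B(y_\ell, c/2)$, telescope
\[
\chi_{B_j}/\mu(B_j) - \chi_{B_1}/\mu(B_1) = \sum_{\ell=0}^{M_j-1}\bigl[\chi_{\tilde B_{\ell+1}}/\mu(\tilde B_{\ell+1}) - \chi_{\tilde B_\ell}/\mu(\tilde B_\ell)\bigr].
\]
Since $c/2 + R_0 < c$, each summand has zero mean and is supported in $B(y_\ell,c)\in\cB_c$; by local doubling its $L^\infty$-norm is bounded by a constant times $1/\mu(B(y_\ell,c))$, so it is a uniformly bounded multiple of a $(1,r)$-atom. With $\abs{m_j} \le 1$, the correction sum contributes at most $N_0 \cdot M_0$ such atoms, and (i) follows with $N := N_0(1+M_0)$.

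\textbf{Main obstacle.} The chain construction is the only delicate point: the assumption $c > R_0/(1-\be)$, combined with $\be > 1/2$, gives precisely the margin $R_0 < c/2$ needed so that two balls of radius $c/2$ with centers within $R_0$ of each other lie inside a single ball of radius $c$. Without this headroom the telescoping differences would fail to be legitimate $H_c^{1,r}(\mu)$-atoms, and it is here, and only here, that property \EB\ is used.
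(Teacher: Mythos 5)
Your proof is correct but takes a genuinely different route from the paper's. The paper covers $B$ by balls $B_j$ of radius $\be' r_B$ (with $\be' < 1-\be$ chosen so that $R_0/\be' < c$), forms $\phi_j = a\psi_j - A_j\One_{B_0}$ with a partition of unity $\psi_j$, and invokes (AMP) to enclose each $B_j\cup B_0$ in a ball $\wt B_j$ of radius at most $(\be+\be')r_B < r_B$; the net effect of one decomposition step is to replace an atom on a ball of radius $r_B$ by boundedly many atoms on balls of radius $\le(\be+\be')r_B$, and the argument then iterates a bounded number of times (depending on $b/c$) until the radii drop below $c$. You instead go directly to scale $c/2$, cover $B$ once, split $a$ into localized pieces $a\varphi_j - m_j\chi_{B_j}/\mu(B_j)$ (already atoms on $\cB_c$), and dispose of the mass-shift terms $\sum m_j\chi_{B_j}/\mu(B_j)$ by a telescoping chain built via iterated (AMP) bisection — a standard chaining device. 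Both uses of (AMP) exploit the same margin $R_0/(1-\be)<c$: the paper uses it to guarantee $\wt B_j$ strictly shrinks, you use it to guarantee $R_0 < c/2$ so consecutive chain balls of radius $c/2$ fit in a single $\cB_c$ ball. Your approach is arguably cleaner in that it avoids the iteration of the whole covering. One small imprecision: for $r<\infty$ the piece $a\varphi_j - m_j\chi_{B_j}/\mu(B_j)$ is not in $L^\infty$ (only the correction term is), so the verification should be stated against the $L^r$ atom normalization, which does go through via $\|a\varphi_j\|_{L^r} \le \mu(B)^{1/r-1}$ and the local-doubling comparison $\mu(B)\lesssim\mu(B_j)$; as written, the phrase "$L^\infty$-norm bounded by" is accurate only when $r=\infty$. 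This is a presentation point, not a logical gap.
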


\begin{proof}
Choose $\be'$ in $(0,1-\be)$ such that $R_0/\be' <c$.

First we prove \rmi.
Suppose that $B$ is a ball in $\cB_b$ and that
$r_B > c$, for otherwise there is nothing to prove.    
Denote by $\{z_1,\ldots, z_{N_1}\}$ a maximal
set of points in $B$ such that $\rho(z_j,z_k) \geq \be' r_B$
for all $j\neq k$ 
and each point of $B$ is at distance at most $\be' r_B$
from the set $\{z_1,\ldots, z_{N_1}\}$.  Denote by $B_j$
the ball with centre $z_j$ and radius $\be' r_B$, and
by $B_0$ the ball with centre $c_B$ and
radius $\be' r_B$.
Note that 
\begin{equation} \label{f: dcentouno}
\mu(B)
\leq D_{1/\be',b}\, \mu(B_0),
\end{equation}
where $D_{1/\be',b}$ is as in Remark~\ref{r: geom I}.
We consider the partition of unity $\{\psi_1,\ldots,\psi_{N_1}\}$
of $\One_{\bigcup_j B_j}$
subordinated to the covering $\{B_1,\ldots, B_{N_1}\}$ defined by
$
\psi_j 
= \One_{B_j}/\sum^{N_1}_{k=1}\One_{B_k}.
$
For each $j$ in $\set{1,\dots,{N_1}}$ we define
$$
A_j
= \frac{1}{\mu(B_0)}   \int_{M} 
a \, \psi_j \wrt \mu 
\qquad\hbox{and}\qquad
\phi_j
= a\, \psi_j - A_j\, \One_{B_0}.
$$
It is straighforward to check that 
$
a = \sum_{j=1}^{N_1} \phi_j
$
and each function $\phi_j$ has integral 
$0$ and is supported in $B_j\cup B_0$.  Define $J'$ and $J''$ by 
$$
J' = \{j: z_j\notin B_0 \}
\qquad\hbox{and}\qquad
J'' = \{j: z_j\in B_0 \}.
$$

If $j$ is in $J'$, then $z_j$ is not in $B_0$, so that
$\rho(c_B,z_j) > \be' r_B > \be' c > R_0$.
Therefore we may use property \EB\ and conclude that there exists
a ball $B_j'$ containing $c_B$ and $z_j$ with radius $< \be \,\rho(c_B,z_j)$.
Denote by $\wt B_j$ the ball centred at $c_{B_j'}$ with radius
$r_{B_j'} + \be' r_B$.  By using the triangle inequality we see
that $\wt B_j$ contains $B_j\cup B_0$.  
Observe that $r_{\tilde B_j} \leq (\be+ \be') \, r_B$, which
is strictly less than $r_B$, because we assumed that $\be' < 1- \be$. 

Next we check that if $j$ is in $J'$, then
$\phi_j$ is a multiple of a $(1,r)$-atom:
we give details in the case where $r=2$; the cases
where $r\in (1,\infty)\setminus\{2\}$ may be treated similarly,
and the variations needed to treat the case where $r=\infty$ are
straightforward and are omitted.
By the triangle inequality 
$$
\begin{aligned}
\Bigl(\smallfrac{1}{\mu(\wt B_j)} \int_{\wt B_j} \mod{\phi_j}^2 
\wrt \mu \Bigr)^{1/2}
& \leq   \Bigl(\smallfrac{1}{\mu(\wt B_j)} \int_{\wt B_j} \mod{a\, \psi_j}^2 
      \wrt \mu \Bigr)^{1/2} 
      + \mod{A_j} \Bigl(\smallfrac{1}{\mu(\wt B_j)} 
      \int_{\wt B_j} \One_{B_0} \wrt \mu \Bigr)^{1/2} \\
& \leq  \sqrt{\smallfrac{\mu(B)}{\mu(\wt B_j)}}  
      \,\Bigl(\smallfrac{1}{\mu(B)} \int_{B} \mod{a}^2 
      \wrt \mu \Bigr)^{1/2} \
       + \sqrt{\smallfrac{\mu(B_0)}{\mu(\wt B_j)}}  
      \,\smallfrac{1}{\mu(B_0)} \int_{M} \mod{a\,\psi_j} 
      \wrt \mu  \\
& \leq  \Bigl(\sqrt{\smallfrac{\mu(B)}{\mu(B_0)}}  
       + \smallfrac{\mu(B)}{\mu(B_0)} \Bigr) \, \smallfrac{1}{\mu(B)} \\
& \leq  \smallfrac{2\, D_{1/\be',b}}{\mu(B)}.
\end{aligned}
$$
Observe that
$$
\smallfrac{1}{\mu(B)}
\leq \smallfrac{1}{\mu(B_0)}
\leq \smallfrac{D_{\be/\be'+1,b}}{\mu(\wt B_j)},
$$
because $B_0$ is contained both in $B$ and $\wt B_j$ and 
the ratio between the radii of $\wt B_j$ and $B_0$ is at most 
$\be/\be'+1$.
Therefore we may conclude that
$$
\Bigl(\smallfrac{1}{\mu(\wt B_j)} \int_{\wt B_j} \mod{\phi_j}^2 
\wrt \mu \Bigr)^{1/2}
\leq  \frac{2\, D_{1/\be',b} \, D_{\be/\be'+1,b}}{\mu(\wt B_j)},
$$
i.e., $\phi_j/(2\, D_{1/\be',b} \, D_{\be/\be'+1,b})$ is an atom supported
in the ball $\wt B_j$ of radius at most $(\be+\be') \, r_B$.   

Now suppose that $j$ is in $J''$.  Then $B_j \cup B_0$ is contained
in $2B_0$.  Notice that $\be'<1-\be<1/2$, so that
$r_{2B_0} = 2\be' r_B < (\be+\be')\, r_B$.   
By arguing much as above, we see that
$\phi_j/(2\, D_{1/\be',b} \, D_{\be/\be'+1,b})$ is an atom supported
in the ball $2B_0$ of radius $<(\be+\be')\, r_B$.

We have written $a$ as the sum of $N_1$ functions $\phi_j$,
each of which is a multiple of an atom with constant $2\, D_{1/\be',b} \, D_{\be/\be'+1,b}$.
Thus, we have proved that
$\norm{a}{H_{b(\be+\be')}^{1,r}} \leq 2\, D_{1/\be',b} 
\, D_{\be/\be'+1,b} \, {N_1}$.

Now, if $j$ is in $J'$, and $r_{\tilde B_j}<c$, then
$\wt B_j$ is in $\cB_{c}$.  Similarly, if $j$ is 
in $J''$, and $r_{2B_0}<c$, then $2B_0$ is in $\cB_c$.
If $2B_0$ and all the balls $\wt B_j$ are in $\cB_c$, then
the proof is complete.  
Otherwise either $2B_0$ or some of the $\wt B_j$'s 
is not in $\cB_{c}$, and we must iterate 
the construction above.  It is clear that
after a finite number of steps, depending 
on the ratio $b/c$, we end up with the required decomposition.

Next we prove \rmii.
Obviously $\norm{f}{H_{b}^{1,r}}\leq \norm{f}{H_{c}^{1,r}}$,  
so we only have to show that 
$\norm{f}{H_{c}^{1,r}}\leq C\, \norm{f}{H_{b}^{1,r}}$
for some constant $C$ depending only on $b$ and $c$ and $M$.  
But this follows directly from \rmi.
\end{proof}

\begin{definition}
Suppose that $r$ is in $(1,\infty)$. 
Then for every $b$ and $c$ in $\BR^+$
such that $R_0/(1-\be)<c<b$ the spaces $H_{b}^{1,r}(\mu)$ and 
$H_{c}^{1,r}(\mu)$ are isomorphic (in fact, they contain the same
functions) by Proposition~\ref{p: decphi}~\rmii, 
and will simply be denoted by $H^{1,r}(\mu)$.
\end{definition}

Later (see Section~\ref{s: duality}) 
we shall prove that $H^{1,r}(\mu)$ does not depend on
the parameter $r$ in $(1,\infty)$, and we shall denote $H^{1,r}(\mu)$
simply by $\hu{\mu}$.

\section{The space $BMO$} \label{s: BMO}

Suppose that $q$ is in $[1,\infty)$.  
For each locally integrable function~$f$ we define $N_b^q(f)$ by
$$
N_b^q(f)
=  \sup_{B\in\cB_b} \Bigl(\frac{1}{\mu(B)}
\int_B \mod{f-f_B}^q \wrt\mu \Bigr)^{1/q},
$$
where $f_B$ denotes the average of $f$ over $B$. 
We denote by $BMO_b^q(\mu)$ the space of all equivalence classes
of locally integrable functions $f$ modulo constants,
such that $N_b^q(f)$ is finite, endowed with the norm
$$
\norm{f}{BMO_b^q(\mu)}
= N_b^q(f).
$$

Notice that only ``small'' balls enter in the definition of $BMO_b^q(\mu)$.  
It is a nontrivial fact, proved in Proposition~\ref{p: normeq} below,
that~$BMO_b^q(\mu)$ is independent of the parameter 
$b$, provided $b$ is large enough, 
and that the norms $N_b^q$ are all equivalent.

\begin{proposition}\label{p: normeq}
Suppose that
$q$ is in $[1,\infty)$, and 
$b$ and $c$ are positive constant such that $R_0/(1-\be)<c<b$.
Then $BMO_b^q(\mu)$ and
$BMO_c^q(\mu)$ coincide and the norms $N_b^q$ and $N_c^q$ are equivalent. 
\end{proposition}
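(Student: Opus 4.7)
The inequality $N_c^q(f) \leq N_b^q(f)$ is immediate since $\cB_c \subseteq \cB_b$; the substance of the proposition is the converse bound $N_b^q(f) \leq C \, N_c^q(f)$, with $C$ depending only on $b$, $c$ and $M$. My plan is to prove a one-step recursive inequality reducing control of the oscillation over a ball of radius $r_B \in (c,b]$ to control of oscillations over balls of strictly smaller radius, then iterate. Choose $\beta'$ in $(0,1-\beta)$ with $R_0/\beta' < c$, and fix $B \in \cB_b$ with $r_B > c$. I repeat verbatim the geometric construction from the proof of Proposition~\ref{p: decphi}: take a maximal $\beta' r_B$-separated set $\{z_1,\ldots,z_{N_1}\}$ in $B$, set $B_j = B(z_j,\beta' r_B)$ and $B_0 = B(c_B,\beta' r_B)$, and produce a ball $\wt B_j \supseteq B_j \cup B_0$ with $r_{\wt B_j} \leq (\beta+\beta')\, r_B$, either by invoking property~(AMP) when $z_j \notin B_0$, or by taking $2B_0$ otherwise. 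The local doubling property bounds both the ratio $\mu(\wt B_j)/\mu(B_0)$ and the cardinality $N_1$ by constants depending only on $b$, $c$ and $M$.

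On each $B_j$ I write $|f - f_{B_0}|^q \leq 2^{q-1}\bigl(|f - f_{\wt B_j}|^q + |f_{\wt B_j} - f_{B_0}|^q\bigr)$. The first term integrates over $B_j \subseteq \wt B_j$ to at most $\mu(\wt B_j)\, N^q_{(\beta+\beta')r_B}(f)^q$; for the constant difference, Jensen's inequality combined with $B_0 \subseteq \wt B_j$ and the local doubling bound $\mu(\wt B_j) \leq C\, \mu(B_0)$ gives
\[
|f_{\wt B_j} - f_{B_0}|
\leq \frac{1}{\mu(B_0)}\int_{\wt B_j} |f - f_{\wt B_j}| \wrt\mu
\leq C\, N^q_{(\beta+\beta')r_B}(f).
\]
Summing over $j$, and using the elementary fact $\int_B |f-f_B|^q\, \wrt\mu \leq 2^q \int_B |f-f_{B_0}|^q\, \wrt\mu$, yields $\bigl(\mu(B)^{-1}\int_B |f-f_B|^q\, \wrt\mu\bigr)^{1/q} \leq C\, N^q_{(\beta+\beta')r_B}(f)$. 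Taking the supremum over $B \in \cB_b$ with $r_B > c$ and combining with the trivial bound on balls in $\cB_c$ produces the recursion
\[
N^q_b(f) \leq \max\bigl\{N^q_c(f),\ C\, N^q_{(\beta+\beta')b}(f)\bigr\}.
\]

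Since $\beta+\beta' < 1$, iterating this inequality a bounded number of times — at most $\lceil \log(b/c)/\log(1/(\beta+\beta'))\rceil$ — reduces the second argument below $c$ and yields $N^q_b(f) \leq C'\, N^q_c(f)$ with $C'$ depending only on $b$, $c$ and $M$. The main technical point is the estimate for $|f_{\wt B_j} - f_{B_0}|$: this is precisely what property~(AMP) buys us, since it forces the intermediate ball $\wt B_j$ to have radius strictly smaller than $r_B$, which is exactly what makes the recursion contract. The rest is a routine adaptation of the template used for $H^1$ in Proposition~\ref{p: decphi}.
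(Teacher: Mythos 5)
Your argument is correct and follows a genuinely different route from the paper's. The paper establishes the nontrivial bound $N_b^q(f) \le C\, N_c^q(f)$ by duality: it writes the oscillation of $f$ over a large ball $B$ as a supremum of pairings $\int_B f\,\phi\wrt\mu$ over functions $\phi$ in $L^\infty_0(B)$ with $\norm{\phi}{\infty}\le 1$, observes that $\phi/\mu(B)$ is a $(1,\infty)$-atom, and invokes Proposition~\ref{p: decphi}~\rmi\ to decompose that atom into boundedly many atoms supported in balls of $\cB_c$. You bypass both the duality step and the atomic decomposition lemma; instead you re-run the geometric construction underlying Proposition~\ref{p: decphi} --- the maximal $\beta' r_B$-net, the enveloping ball $\wt B_j$ of radius at most $(\beta+\beta')\,r_B < r_B$ produced by property~(AMP) --- directly at the level of the oscillation, arriving at a contractive recursion $N_b^q(f) \le \max\bigl\{N_c^q(f),\, C\, N_{(\beta+\beta')b}^q(f)\bigr\}$ which terminates after a bounded number of iterations since $\beta+\beta'<1$. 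A small advantage of your version is that it handles all $q$ in $[1,\infty)$ uniformly via the convexity inequality $\mod{s+t}^q\le 2^{q-1}\bigl(\mod{s}^q+\mod{t}^q\bigr)$, whereas the paper works out $q=1$ and states that the other cases are similar (they require $L^{q'}$ duality and $(1,q')$-atoms). Conversely, the paper's proof is more economical, reusing Proposition~\ref{p: decphi} as a black box instead of re-deriving its geometric content. Both arguments rest on the same mechanism: property (AMP) forces the intermediate balls to have radius strictly smaller than $r_B$, which is what drives the reduction from scale $b$ down to scale $c$.
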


\begin{proof} 
Obviously, if $0<c<b$ and $f$ is in $BMO_b^q(\mu)$, 
then $f$ is in $BMO_c^q(\mu)$ and $N_c^q(f)
\leq N_b^q(f)$.  Thus, we only have to show that 
$N_b^q(f) \le C \, N_c^q(f)$ 
for some constant $C$ depending only on $b$ and $c$ and $M$.  
We give the proof in the case where $q=1$; the proof in
the other cases is similar. 

Suppose that $B$ is a ball in $\cB_b$.  Observe that
\begin{align*}
\frac{1}{\mu (B)}\int_B \mod{f-f_B}\wrt \mu 
& \le \frac{2}{\mu (B)} \, \inf_{c\in\BC}\int_B \mod{f-c}\wrt \mu \\
& \le \frac{2}{ \mu (B)}\, \norm{f}{L^1(B)/\BC} , 
\end{align*}
where ${L^1(B)/\BC}$ is the quotient of the space 
$L^1(B)$ modulo the constants. 
Since the dual of ${L^1(B)/\BC}$ is $L^\infty_0(B)$ (the space
of all functions in $\ly{B}$ with vanishing integral, endowed
with the $\ly{B}$ norm),
$$
\norm{f}{L^1(B)/\BC} 
= \sup \Bigl\{ \Bigmod{\int_B f\, \phi \, \wrt  \mu }: 
\phi\in L^\infty_0(B), \norm{\phi}{\infty}\le 1  \Bigr\}.
$$
Suppose that $\phi$ is a function in $L^\infty_0(B)$ 
with $\norm{\phi}{\infty}\le 1$.   Then $\phi/\mu(B)$
is a $(1,\infty)$-atom and,  by Proposition~\ref{p: decphi}~\rmi,
there exist $(1,\infty)$-atoms $a_1,\ldots,a_N$ 
supported in balls $B_j$ in $\cB_{c}$ whose union contains $B$ such that 
$\phi/\mu(B) = \sum_{j=1}^N \la_j\, a_j$,
with $\mod{\la_j} \leq C$ and $\norm{a_j}{\infty}\le 1/\mu(B_j)$, where 
$C$ and $N$ are constants which depend only on $b$, $c$ and $M$.  Thus
\begin{align*}
\frac{1}{ \mu (B)}\Bigmod{\int_B f\, \phi\wrt \mu }
& \le C\sum_{j=0}^N \, \int_{B_j} 
      \bigmod{f-f_{B_j}} \, \mod{a_j} \wrt \mu  \\ 
& \le C \, \sum_{j=0}^N 
      \, \frac{1}{ \mu (B_j)}\int_{B_j}\mod{f-f_{B_j}} \wrt \mu  \\ 
& \le C\, N \, N_c^1(f).
\end{align*}
Hence $N_b^1(f) \leq 2\,C \,N\, N_c^1(f)$, as required. 
\end{proof}

\begin{remark}
For the rest of this paper,
we fix a constant $b_0 > R_0/(1-\be)$.
For each $q$ in $[1,\infty)$ we denote by~$BMO^q(\mu)$
the space $BMO_{b_0}^q(\mu)$ endowed with any of the equivalent
norms $N_b^q$, where $b>R_0/(1-\be)$.  
\end{remark}

Next, we want to show that ~$BMO^q(\mu)$ is independent of 
$q$ (see the remark at the end of this section).
The strategy is the same as in the classical case: it hinges on
a John--Nirenberg type inequality for functions in $BMO^1(\mu)$.
The original inequality was proved in \cite{JN}, where classical
functions of bounded mean oscillation appeared for the first time. 
We need the following generalization of the John--Nirenberg 
inequality to doubling spaces which is stated in  \cite{CW} 
and proved in \cite[Thm~2.2]{Buc} (see also \cite{MMNO, MP}).

\begin{proposition}\label{p: JNDMS}
Suppose that $(X,d,\mu)$ is a doubling metric measure space, with
doubling constant $D$. 
There exist constants $J_D$ and $\eta_D$, 
which depend only on $D$, such that
$$
\mu\bigl(\bigl\{ x \in B: \bigmod{f(x) - f_B} > s\bigr\} \bigr)
\le  J_D\, \e^{-\eta_D s/\norm{f}{BMO(X)}} \, \mu(B)
\quant s \in \BR^+ \quant B.
$$
\end{proposition}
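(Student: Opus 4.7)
The natural approach is the classical Calder\'on--Zygmund stopping-time argument, adapted to the doubling metric setting through Christ's system of dyadic cubes. Normalize so that $\norm{f}{BMO(X)}=1$, and define
$$
F(s)=\sup_{B,f}\frac{\mu\bigl(\bigl\{x\in B:\mod{f(x)-f_B}>s\bigr\}\bigr)}{\mu(B)},
$$
where the supremum is over all balls $B\subset X$ and all $f$ with $BMO$-norm at most one. Chebyshev's inequality gives $F(s)\le 1/s$, so $F$ is finite and tends to $0$ as $s\to\infty$. Since $\mu$ is doubling, Theorem~\ref{t: dyadic cubes} supplies a family of dyadic cubes on all of $X$ with the usual nesting and diameter/measure estimates.

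Fix $B$ and $f$. I would run a stopping-time procedure on $g=\mod{f-f_B}\,\One_B$ at level $\alpha>1$: pick a resolution so fine that $B$ is a union of dyadic cubes on which the average of $g$ is $<\alpha$, and select the maximal dyadic subcubes $Q_j$ on which the average of $g$ exceeds $\alpha$. The parent of each $Q_j$ fails the stopping criterion, so doubling yields
$$
\alpha<\frac{1}{\mu(Q_j)}\int_{Q_j}g\wrt\mu\le K\alpha
$$
for some $K=K(D)$; in addition $g\le\alpha$ almost everywhere on $B\setminus\bigcup_jQ_j$, and $\sum_j\mu(Q_j)\le\mu(B)/\alpha$ because $\int_Bg\wrt\mu\le\mu(B)$.

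By Theorem~\ref{t: dyadic cubes}~\rmiv\ each $Q_j$ sits inside a ball $B_j$ with $\mu(B_j)\le c\,\mu(Q_j)$. The stopping estimate and $\norm{f}{BMO}\le 1$ together yield $\mod{f_{B_j}-f_B}\le K'\alpha$ for a constant $K'=K'(D)$. Hence
$$
\bigl\{x\in B:\mod{f-f_B}>s+K'\alpha\bigr\}\subset\bigcup_j\bigl\{x\in B_j:\mod{f-f_{B_j}}>s\bigr\},
$$
and the definition of $F$ gives
$$
\mu\bigl(\bigl\{x\in B:\mod{f-f_B}>s+K'\alpha\bigr\}\bigr)\le F(s)\sum_j\mu(B_j)\le\frac{c\,F(s)}{\alpha}\,\mu(B).
$$
Taking the supremum over $B$ and $f$ gives the recursion $F(s+K'\alpha)\le(c/\alpha)\,F(s)$. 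Choosing $\alpha$ so large that $c/\alpha=\e^{-1}$ and iterating produces the desired exponential bound $F(s)\le J_D\,\e^{-\eta_D s}$ with constants depending only on $D$.

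The main obstacle is the Calder\'on--Zygmund step: the upper bound on the average over $Q_j$ and the chain estimate $\mod{f_{B_j}-f_B}\le K'\alpha$ both rest on passing quantitatively from a selected cube to its dyadic parent, which requires doubling uniformly in scale. Once these two cube-to-ball comparisons are established via Theorem~\ref{t: dyadic cubes} and the doubling hypothesis on $(X,d,\mu)$, the exponential decay is a routine iteration on $F$.
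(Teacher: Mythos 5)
The paper offers no proof of this proposition at all: it is stated with citations to Coifman--Weiss \cite{CW} and to Buckley \cite[Thm~2.2]{Buc}, so there is no in-house argument to compare against. Your Calder\'on--Zygmund stopping-time scheme on Christ's dyadic cubes is the standard route (and is essentially what the cited reference does). The recursion $F(s+K'\alpha)\le(c/\alpha)F(s)$, the iteration to exponential decay, and the two estimates you single out---the cube-to-parent doubling bound and the chain inequality $\mod{f_{B_j}-f_B}\le K'\alpha$---are exactly the right things to establish, and they go through as you say. (For the ball enclosure $\mu(B_j)\le c\,\mu(Q_j)$ you need both parts \rmiv\ and \rmv\ of the cube theorem, not just \rmiv: \rmiv\ supplies the containing ball, \rmv\ plus doubling gives the measure comparison.)

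Two small imprecisions worth correcting. First, you ask for a resolution \emph{so fine} that the dyadic cubes covering $B$ have average of $g$ below $\alpha$; it must instead be \emph{coarse}---the initial generation should have diameter comparable to $r_B$, since refining only increases the maximal dyadic average of $g$. Relatedly, $B$ is never literally a union of dyadic cubes; one either covers $B$ by a bounded (by doubling) number of cubes of comparable scale and runs the stopping time in each, or works inside a single cube of comparable measure containing $B$. Second, Theorem~\ref{t: dyadic cubes} in this paper is stated for the specific space $M$ under the standing assumptions of Section~\ref{s: Geom prop}, whereas Proposition~\ref{p: JNDMS} concerns an abstract doubling metric measure space $(X,d,\mu)$; what you actually need is Christ's original construction \cite{Ch} on $X$. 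This is cosmetic---the construction applies verbatim to any space of homogeneous type---but the citation should point there rather than to the paper's internal theorem.
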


By Proposition~\ref{p: further prop}~\rmii\ 
for each dyadic cube $Q$ the measured
metric space $(Q,\rho_{\vert Q}, \mu_{\vert Q})$ is a space of
homogeneous type.  We denote by $BMO(Q)$ the classical 
$BMO$ space on $Q$.

\begin{theorem} \label{t: JNI}
Denote by $\nu$ the unique integer such that $\de^\nu \leq b_0 < \de^{\nu-1}$,
and by $N$ the norm $N_{2\max(C_1,b_0)}^1$ on $BMO(\mu)$.
The following hold:
\begin{enumerate}
\item[\itemno1]
for each dyadic cube $Q$ in $\bigcup_{k = \nu}^\infty \cQ^k$
and for each~$f$ in $BMO^1(\mu)$
the restriction of $f$ to $Q$ is in $BMO(Q)$ and 
$$
\norm{f}{BMO(Q)}
\leq  2\, D_{C_1/(a_0\de),b_0\max(1,a_0)}\, N(f);
$$
\item[\itemno2]
there exist positive constants $J$ and $\eta$ such that for every 
function $f$ in $BMO^1(\mu)$ and for every ball $B$ in $\cB_{b_0}$
$$
\mu\bigl(\{x\in B: \mod{f(x)-f_B} >s \} \bigr)
\leq J \, \e^{-\eta \,s/N(f) } \, \mu(B).
$$
\end{enumerate}
\end{theorem}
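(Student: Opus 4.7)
The plan is to prove (i) by case analysis on the size of the ball $B(x,r)$ that cuts out a ball in the induced homogeneous-space structure on $Q$, and then to derive (ii) by covering $B$ with a uniformly bounded number of dyadic cubes of appropriate resolution and applying the classical John--Nirenberg inequality (Proposition~\ref{p: JNDMS}) on each of them.

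For (i), fix $Q$ in $\cQ^k$ with $k \geq \nu$. A ball in $(Q, \rho_{\vert Q}, \mu_{\vert Q})$ has the form $B(x,r) \cap Q$ with $x \in Q$, and I split into two cases. If $r < C_1 \de^k$, Proposition~\ref{p: further prop}~\rmi\ yields $\mu(B(x,r) \cap Q) \geq D_{C_1/(a_0\de),\de^\nu}^{-1} \, \mu(B(x,r))$, so the oscillation of $f$ over $B(x,r) \cap Q$ is dominated by $D_{C_1/(a_0\de),\de^\nu}$ times the oscillation over $B(x,r)$. Since $r < C_1 \de^\nu \leq C_1 b_0$, this latter oscillation is controlled by a fixed multiple of $N(f)$, invoking Proposition~\ref{p: normeq} to absorb the ratio between allowed radii into a constant. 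If $r \geq C_1 \de^k$, then $B(x,r) \cap Q = Q$ by Proposition~\ref{p: further prop}~\rmi; here I introduce the auxiliary ball $B^* = B(z_\al^k, C_1 \de^k)$, which contains $Q$ by Theorem~\ref{t: dyadic cubes}~\rmiv\ and whose measure is at most a local-doubling factor times $\mu(B(z_\al^k, a_0\de^k)) \leq \mu(Q)$. The standard estimate $\int_Q |f-f_Q|\wrt\mu \leq 2\int_Q |f-f_{B^*}|\wrt\mu$ then reduces the oscillation of $f$ over $Q$ to its oscillation over $B^*$, again controlled by $N(f)$.

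For (ii), given $B$ in $\cB_{b_0}$, I choose $k$ with $\de^k \leq r_B < \de^{k-1}$; since $r_B \leq b_0 < \de^{\nu-1}$, one has $k \geq \nu$. By Proposition~\ref{p: further prop}~\rmiii--\rmiv\ the dyadic cubes $Q_1, \ldots, Q_N$ in $\cQ^k$ that meet $B$ are at most $D_{(1+C_1)/(a_0\de),b_0}$ in number, they are all contained in $\wt B := B(c_B, (1+C_1)\, r_B)$, and $\mu(\wt B) \leq D_{1+C_1,b_0} \, \mu(B)$. By part~\rmi\ together with Proposition~\ref{p: further prop}~\rmii, each $(Q_j, \rho_{\vert Q_j}, \mu_{\vert Q_j})$ is a doubling space with uniformly bounded doubling constant on which $f$ has $BMO$ norm at most $C\, N(f)$. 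Proposition~\ref{p: JNDMS} therefore yields
$$
\mu\bigl(\bigl\{x \in Q_j : |f(x) - f_{Q_j}| > t\bigr\}\bigr) \leq J_0 \, \e^{-\eta_0 t / N(f)} \, \mu(Q_j)
\quant t > 0,
$$
with constants $J_0, \eta_0$ independent of $j$.

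To replace $f_{Q_j}$ by $f_B$, I will use $\wt B$ as a bridge and estimate $|f_B - f_{Q_j}| \leq |f_B - f_{\wt B}| + |f_{Q_j} - f_{\wt B}|$; each term is a bounded multiple of $N(f)$, because $\mu(\wt B)$ is comparable to both $\mu(B)$ and $\mu(Q_j)$ via the local doubling condition (the latter through the inscribed ball furnished by Theorem~\ref{t: dyadic cubes}~\rmv). Writing $C_0 \, N(f)$ for the resulting bound, the inclusion
$$
\bigl\{x \in B : |f(x) - f_B| > s\bigr\} \subset \bigcup_{j=1}^N \bigl\{x \in Q_j : |f(x) - f_{Q_j}| > s - C_0 \, N(f)\bigr\},
$$
combined with the preceding display and $\mu(Q_j) \leq \mu(\wt B) \leq D_{1+C_1,b_0}\, \mu(B)$, delivers the claimed exponential estimate for $s \geq 2\, C_0 \, N(f)$; for smaller $s$ the trivial bound by $\mu(B)$ suffices after enlarging $J$. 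The main obstacle will be the careful bookkeeping of the various doubling constants in~\rmi\ needed to match the explicit value $2\, D_{C_1/(a_0\de),b_0\max(1,a_0)}$, and in~\rmii\ the bridging step that replaces the cube averages by the ball average uniformly across all $Q_j$.
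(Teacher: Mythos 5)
Your proposal is correct and follows essentially the same route as the paper: in (i) the same dichotomy $r\lessgtr C_1\de^k$, using Proposition~\ref{p: further prop}~\rmi\ in the first case and an auxiliary ball of radius $C_1\de^k$ around $z_\al^k$ in the second; in (ii) the same covering of $B$ by boundedly many dyadic cubes of resolution $k$, the appeal to Proposition~\ref{p: JNDMS} on each cube, and the bridging of $f_{Q_j}$ to $f_B$ via $\wt B$. The only cosmetic difference is in the bridging step of (ii): the paper splits the level set at $s/2$ between $|f-f_{Q_j}|$ and $|f_B-f_{Q_j}|$ and handles the second piece as a step function, whereas you shift the threshold to $s-C_0N(f)$ and absorb small $s$ by a trivial bound; the two are equivalent and yield the same exponential decay.
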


\begin{proof}
First we prove \rmi.  Suppose that $Q$ is in $\cQ^k$.
Recall that a ball in $Q$ is the intersection of~$Q$ with a ball $B$
in $\cB$ whose centre belongs to $Q$.  
We have to estimate the oscillation $\osc_f(B\cap Q)$
of $f$ over $B\cap Q$ defined by
\begin{equation} \label{f: osc on BQ}
\osc_f(B\cap Q)
= \frac{1}{\mu(B\cap Q)} \int_{B\cap Q} \bigmod{f-f_{B\cap Q}} \wrt \mu.
\end{equation}
We shall prove that 
$$
\osc_f(B\cap Q) 
\leq 2\, D_{C_1/(a_0\de),b_0\max(1,a_0)} \, N(f)
\quant f \in BMO^1(\mu),
$$
from which \rmi\ follows.
We consider the cases where $r_B < C_1\,\de^k$ and 
$r_B \geq C_1\, \de^k$ separately.

In the case where $r_B < C_1\,\de^k$ we compare (\ref{f: osc on BQ})
with the oscillation of $f$ over $B$.  
By the triangle inequality 
$$
\begin{aligned}
\osc_f(B\cap Q)
& \leq \frac{1}{\mu(B\cap Q)} \int_{B\cap Q} \bigmod{f-f_{B}} \wrt \mu
     + \bigmod{f_B-f_{B\cap Q}} \\
& \leq \frac{2}{\mu(B\cap Q)} \int_{B\cap Q} \bigmod{f-f_{B}} \wrt \mu. 
\end{aligned}
$$
By Proposition~\ref{p: further prop}~\rmi\ 
we know that $\mu(B\cap Q) \geq D_{C_1/(a_0\de),\de^\nu}^{-1} \, \mu(B)$;
hence the right hand side in the displayed formula above
may be estimated from above~by 
$$
\frac{2\, D_{C_1/(a_0\de),\de^\nu}}{\mu(B)} 
\int_{B} \bigmod{f-f_{B}} \wrt \mu,
$$
which, in turn, may be majorised by 
$D_{C_1/(a_0\de),b_0} \, N(f)$.

Now suppose that $r_B \geq C_1\, \de^k$.
Since $\diam (Q) < C_1 \, \de^{k}$ by Theorem~\ref{t: dyadic cubes}~\rmiv, 
$Q\cap B=Q$.  For the sake of definitess, suppose that $Q$ is 
the dyadic cube $Q_{\be}^k$.  Then $Q_\be^k$ contains the
ball $B(z_{\be}^k,a_0\, \de^k)$.  Denote by $\wt B$ the ball
centred at $z_\be^k$ and radius $C_1\, \de^k$.
Now, 
$$
\begin{aligned}
\osc_f(B\cap Q)
& \leq \frac{1}{\mu(B\cap Q)} \int_{B\cap Q} \bigmod{f-f_{\wt B}} \wrt \mu
     + \bigmod{f_{\wt B}-f_{B\cap Q}} \\
& \leq \frac{2}{\mu(B\cap Q)} \int_{B\cap Q} 
       \bigmod{f-f_{\wt B}} \wrt \mu \\
& \leq \frac{2}{\mu\bigl(B(z_{\be}^k,a_0\, \de^k)\bigr)} 
       \int_{\wt B} \bigmod{f-f_{\wt B}} \wrt \mu \\
& \leq \frac{2\, D_{C_1/a_0,a_0\de^\nu}}{\mu\bigl(\wt B\bigr)} 
       \int_{\wt B} \bigmod{f-f_{\wt B}} \wrt \mu,
\end{aligned}
$$
which is majorised by $2\, D_{C_1/a_0,a_0b_0} \, N(f)$.
The proof of \rmi\ is complete.

Now we prove \rmii.  Suppose that $B$ is $\cB_{b_0}$. 
Denote by $k$ the unique
integer such that $\de^k \leq r_B < \de^{k-1}$ and by
$Q_1,\ldots,Q_N$ the dyadic cubes of resolution $k$
that intersect $B$.  By Proposition~\ref{p: further prop}~\rmiv\ we have the
estimate $N\leq D_{(1+C_1)/(a_0\de),b_0}$.
Then
\begin{equation} \label{f: claim JN}
\mu\bigl(\{x\in B: \mod{f(x)-f_B} > s \} \bigr)
\leq \sum_{j=1}^N \mu\bigl(\{x\in Q_j: \mod{f(x)-f_B} > s \} \bigr).
\end{equation}
We estimate each of the summands on the right hand side from 
above by
\begin{equation} \label{f: claim JN II}
\mu\bigl(\{x\in Q_j: \mod{f(x)-f_{Q_j}} > s/2 \} \bigr)
+ \mu\bigl(\{x\in Q_j: \mod{f_B-f_{Q_j}} > s/2 \} \bigr).
\end{equation}
By Proposition~\ref{p: JNDMS} and \rmi\ the first summand 
in this formula is majorised by 
$$
J_{Q_j}\, \e^{-\eta_{Q_j} s/\norm{f}{BMO(Q_j)}} \, \mu(Q_j)
\leq J_{Q_j}\, \e^{-\eta_{Q_j} s/(2 D_{C_1/(a_0\de),b_0\max(1,a_0)}  
N(f))} \, \mu(Q_j).
$$
Here we use the fact that since $\diam(Q_j)$ is finite, then
$Q_j$ is a ball in the doubling space $(Q_j,\rho_{\vert Q_j},
\mu_{\vert Q_j})$.
By Proposition~\ref{p: further prop}~\rmii\ 
all the spaces $(Q_j,\rho_{\vert Q},\mu_{\vert Q})$ 
are spaces of homogeneous type with doubling constant
dominated by $D_{\tau, C_1b_0}\, D_{C_1/(a_0\de), b_0}$,
which we simply denote by $D'$.  
Also, denote by $\eta'$ the constant 
$\eta_{D'}/(2 D_{C_1/(a_0\de),b_0\max(1,a_0)})$.
By Proposition~\ref{p: further prop}~\rmiii\ the ball $\wt B$
with centre $c_B$
and radius $(1+C_1) \, r_B$ contains $Q_1,\ldots,Q_N$ and
$\mu(\wt B) \leq D_{C_1+1,{b_0}} \, \mu(B)$.
Thus, by summing over $j$, we see that
\begin{equation} \label{f: prima}
\begin{aligned}
\sum_{j=1}^N \mu\bigl(\{x\in Q_j: \mod{f(x)-f_{Q_j}} > s/2 \} \bigr)
&  \leq J_{D'}
        \, \e^{-\eta' s/N(f)} \, \sum_{j=1}^N \mu(Q_j)  \\
&  \leq J_{D'}
        \, \e^{-\eta' s/N(f)} \, \mu(\wt B) \\
&  \leq J_{D'}
        \, \e^{-\eta' s/N(f)} D_{C_1+1,b_0} \, \mu(B).
\end{aligned}
\end{equation}

Now we estimate the second summand in (\ref{f: claim JN II}).
We claim that 
$$
\mod{f_B-f_{Q_j}} \leq 2\, D_{(1+C_1)/(a_0 \de), b_0} \, N(f).
$$

Indeed, 
$$
\begin{aligned}
\mod{f_B-f_{Q_j}}
& \leq \mod{f_B-f_{\wt B}} + \mod{f_{\wt B}-f_{Q_j}} \\
& \leq \frac{1}{\mu(B)} \int_B \mod{f-f_{\wt B}} \wrt \mu
       + \frac{1}{\mu(Q_j)} \int_{Q_j} \mod{f-f_{\wt B}} \wrt \mu \\
& \leq \frac{1}{\mu(B)} \int_B \mod{f-f_{\wt B}} \wrt \mu
       + \frac{1}{\mu\bigl(B(z_j,a_0\de^k)\bigr)} 
         \int_{\wt B} \mod{f-f_{\wt B}} \wrt \mu \\
& \leq \frac{2\, D_{(1+C_1)/(a_0 \de), b_0}}{\mu(\wt B)} 
         \int_{\wt B} \mod{f-f_{\wt B}} \wrt \mu,
\end{aligned}
$$
which is dominated by $2\, D_{(1+C_1)/(a_0 \de), b_0}\, N(f)$, as claimed.

Thus
$$
\mu\bigl(\{x\in Q_j:  \mod{f_B -f_{Q_j}} > s/2 \} \bigr)
\leq \mu\bigl(\{x\in Q_j: 2 D_{(1+C_1)/(a_0 \de), b_0}
\, N(f)  >s /2 \} \bigr).
$$
Now, the right hand side is equal to $\mu(Q_j)$ when $s$ is in 
$\bigl(0,4D_{(1+C_1)/(a_0 \de), b_0} \, N(f) \bigr)$,
and to~$0$ when $s$ is in 
$\bigl[4D_{(1+C_1)/(a_0 \de), b_0} \, N(f) , \infty \bigr)$,
so that 
$$
\mu\bigl(\{x\in Q_j:  \mod{f_B -f_{Q_j}} > s/2 \} \bigr)
\leq \e^{4D_{(1+C_1)/(a_0 \de), b_0}} \, \e^{-s/N(f)  } \, \mu(Q_j)
\quant  s \in \BR^+.
$$
Therefore
\begin{equation} \label{f: seconda}
\begin{aligned}
\sum_{j=1}^N \mu\bigl(\{x\in Q_j:  \mod{f_B -f_{Q_j}} > s/2 \} \bigr)
& \leq \e^{4D_{(1+C_1)/(a_0 \de), b_0}} \, \e^{-s/N(f)  } \, \mu(\wt B) \\
& \leq \e^{4D_{(1+C_1)/(a_0 \de), b_0}} \,D_{C_1+1,b_0}\,
      \e^{-s/N(f)  } \, \mu(B).
\end{aligned}
\end{equation}
Now, (\ref{f: prima}) and (\ref{f: seconda}) imply that
$$
\begin{aligned}
\mu\bigl(\{x\in B: \mod{f(x)-f_B} >s \} \bigr)
&  \leq \bigl(J_{D'} \, \e^{-\eta' s/N(f)}  
    + \e^{4D_{(1+C_1)/(a_0 \de), b_0}}\, \e^{-s/N(f)}\bigr) \,D_{C_1+1,b_0} 
       \, \mu(B) \\
&\leq J \, \e^{-\eta \,s/N(f) } \, \mu(B),
\end{aligned}
$$
where $J = \bigl(J_{D'}+\e^{4D_{(1+C_1)/(a_0 \de), b_0}}\bigr)
\,D_{C_1+1,b_0}$ and $\eta = \min (1,\eta')$, as required.
\end{proof}

A standard consequence of the John--Nirenberg type inequality
is the following.

\begin{corollary} \label{c: JNI}
Denote by $\nu$ the unique integer such that $\de^\nu \leq b_0 < \de^{\nu-1}$,
and by $N$ the norm $N_{2\max(C_1,b_0)}^1$ on $BMO(\mu)$.
The following hold:
\begin{enumerate}
\item[\itemno1]
for every $c<\eta$
$$
\int_B \e^{c\, \mod{f-f_B}/N(f)} 
\wrt\mu \leq \Bigl(1+\frac{Jc}{\eta-c}\Bigr)\, \mu(B)
\quant f \in BMO(\mu) \quant B \in \cB_{b_0},
$$
where $\eta$ and $J$ are as in Theorem~\ref{t: JNI}~\rmii;
\item[\itemno2]
for each $q$ in $(1,\infty)$ there exists a constant $C$ such that
$$
\Bigl(\frac{1}{\mu(B)}
\int_B \mod{f-f_B}^q \wrt\mu \Bigr)^{1/q}
\leq C\, N(f)
\quant f \in BMO(\mu) \quant B \in \cB_{b_0}.
$$
\end{enumerate}
\end{corollary}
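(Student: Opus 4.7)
The plan is to derive both statements directly from the John--Nirenberg type distributional inequality of Theorem~\ref{t: JNI}~\rmii\ via the layer cake formula. Both arguments are entirely parallel to the classical Euclidean proofs, so no new ideas beyond that inequality are needed.

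For \rmi, I would start from the identity
$$
\e^{c\,\mod{f-f_B}/N(f)}
= 1 + \frac{c}{N(f)} \int_0^{\mod{f-f_B}} \e^{c s/N(f)} \wrt s,
$$
integrate over $B$, and swap the order of integration to get
$$
\int_B \e^{c\,\mod{f-f_B}/N(f)} \wrt\mu
= \mu(B) + \frac{c}{N(f)} \int_0^\infty \e^{c s/N(f)} \,
\mu\bigl(\{x\in B: \mod{f(x)-f_B} > s\}\bigr) \wrt s.
$$
Applying Theorem~\ref{t: JNI}~\rmii\ to bound the distribution function by $J\, \e^{-\eta s/N(f)} \, \mu(B)$, the remaining integral becomes $\int_0^\infty \e^{(c-\eta)s/N(f)} \wrt s = N(f)/(\eta-c)$, which is finite precisely because $c < \eta$. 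Collecting constants yields the claimed bound with factor $1 + Jc/(\eta-c)$.

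For \rmii, the layer cake formula gives
$$
\int_B \mod{f-f_B}^q \wrt\mu
= q \int_0^\infty s^{q-1} \,
\mu\bigl(\{x\in B: \mod{f(x)-f_B} > s\}\bigr) \wrt s,
$$
which, after applying Theorem~\ref{t: JNI}~\rmii\ once more, is dominated by
$$
q J \, \mu(B) \int_0^\infty s^{q-1} \, \e^{-\eta s/N(f)} \wrt s
= \frac{q J \, \Gamma(q)}{\eta^q} \, N(f)^q \, \mu(B).
$$
Dividing by $\mu(B)$, taking the $q$-th root, and setting $C = \bigl(q J \Gamma(q)/\eta^q\bigr)^{1/q}$ gives the required inequality.

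There is essentially no obstacle: the only subtlety is the bookkeeping of constants, and in particular making sure one chooses $c<\eta$ in~\rmi\ so that the exponential integral converges. The assertions for balls $B \in \cB_{b_0}$ follow directly because Theorem~\ref{t: JNI}~\rmii\ is already stated for such balls, and no further extension to larger balls is needed here.
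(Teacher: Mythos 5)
Your argument is correct, and part~\rmi\ is essentially the paper's argument: both compute $\int_B \e^{c|f-f_B|/N(f)}\wrt\mu$ by a layer-cake/Fubini reduction to the distribution function and then plug in the exponential decay from Theorem~\ref{t: JNI}~\rmii, integrating out the exponential under the hypothesis $c<\eta$. For part~\rmii\ you take a slightly different route: you apply the layer-cake formula once more, directly to $\int_B|f-f_B|^q\wrt\mu$, and obtain the explicit constant $C=(qJ\Gamma(q))^{1/q}/\eta$. The paper instead derives \rmii\ from \rmi\ by the elementary pointwise bound $\e^s\geq C_q\,s^q$ for $s\geq0$: substituting $s=c|f-f_B|/N(f)$ and invoking \rmi\ immediately gives $C_q(c/N(f))^q\int_B|f-f_B|^q\wrt\mu\leq(1+Jc/(\eta-c))\mu(B)$. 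Both routes are one-line arguments resting on the same input; yours is self-contained and gives a cleaner constant, while the paper's recycles \rmi\ at the cost of a looser constant. No gap either way.
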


\begin{proof}
First we prove \rmi.
Observe that the left hand side of \rmi\ is equal to 
$$
\mu(B) + \int_1^\infty \mu\left(\{x\in B: \mod{f-f_B} > 
N(f) \, (\log \be)/c \}\right) \wrt \be.  
$$
Changing variables and using the John--Nirenberg type
inequality proved in Theorem~\ref{t: JNI} we see that the last integral
may be estimated by
$$
\mu(B)  \Bigl[1+J \, \ioty \e^{(c-\eta) v/c} \wrt v \Bigr].
$$
The above integral is finite if and only if $c < \eta$ and it is equal to
$c/(\eta-c)$: the required inequality follows.

Now we prove \rmii.  
By elementary calculus, for each $q$ in $(1,\infty)$ there exists
a constant~$C_q$ such that $e^s\geq C_q \, s^q$ for every $s$ in $\BR^+$.
Therefore \rmi\ implies that
$$
C_q\, \Bigl(\frac{c}{N(f)}\Bigr)^q   
\int_B \, \mod{f-f_B}^q \wrt\mu 
\leq \Bigl(1+\frac{Jc}{\eta-c}\Bigr)\, \mu(B),
$$
which is equivalent to the required estimate.

The proof of the corollary is complete.
\end{proof}

\begin{remark} \label{r: BMO}
By Corollary~\ref{c: JNI}~\rmii, if $f$ is in $BMO^1(\mu)$, then $f$
is in $BMO^q(\mu)$ for all $q$ in $(1,\infty)$. 
Conversely, if $f$ is in $BMO^q(\mu)$ for some $q$ in $(1,\infty)$, 
then trivially it is in $BMO^1(\mu)$, hence in $BMO^r(\mu)$ for all $r$
in $(1,\infty)$ by Corollary~\ref{c: JNI}~\rmii.
Furthermore, the norms $N_{b_0}^1$ and $N_{b_0}^q$ are equivalent. 
In view of this observation, all spaces $BMO^q(\mu)$, $q$ in $[1,\infty)$,
coincide.  We shall denote $BMO^1(\mu)$ simply by $BMO(\mu)$.  
We endow $BMO(\mu)$ with any of the equivalent norms $N_b^q$,
where $q$ is in $[1,\infty)$ and $b>R_0/(1-\be)$.
This remark will be important in the proof of the duality
between the Hardy space $H^1(\mu)$ and $BMO(\mu)$ (see
Section~\ref{s: duality} below).
\end{remark}

\section{Duality} \label{s: duality}

We shall prove that the topological dual of $H^{1,r}({\mu})$ 
may be identified with $BMO^{r'}({\mu})$, where $r'$ 
denotes the index conjugate to $r$.
Suppose that $1<r<s<\infty$.  Then 
$\bigl(H^{1,r}({\mu})\bigr)^* = \bigl(H^{1,s}({\mu})\bigr)^*$, 
because we have proved that $BMO^{r'}({\mu})= BMO^{s'}({\mu})$
(see Remark~\ref{r: BMO}).  
Observe that the identity is a continuous injection
of $H^{1,s}(\mu)$ into $H^{1,r}(\mu)$, and that $H^{1,s}(\mu)$
is a dense subspace of $H^{1,r}(\mu)$.  Then we may conclude
that $H^{1,s}(\mu) = H^{1,r}(\mu)$.

We need some more notation and some preliminary observation.
For each ball $B$ in $\cB_{b_0}$ let $\ldO{B}$
denote the Hilbert space of all functions $f$ in $\ld{\mu}$ such that
the support of $f$ is contained in $B$ and $\int_B f \wrt {\mu } = 0$.
We remark that a function $f$ in $\ldO{B}$ is a multiple
of a $(1,2)$-atom, and that
\begin{equation}  \label{f: basic estimate atom}
\norm{f}{H^{1,2}({\mu})} \leq \mu (B)^{1/2} \, \norm{f}{\ld{B}}.
\end{equation}

Suppose that $\ell$ is a 
bounded linear functional on $H^{1,2}({\mu})$.  Then, for
each $B$ in $\cB_{b_0}$ the restriction of $\ell$ to $\ldO{B}$ is
a bounded linear functional on $\ldO{B}$.  Therefore, by the Riesz
representation theorem there
exists a unique function $\ell^B$ in $\ldO{B}$ which represents
the restriction of $\ell$ to $\ldO{B}$.
Note that for every constant $\eta$ the function $\ell^B+\eta$
represents the same functional, though it is not in $\ldO{B}$
unless $\eta$ is equal to $0$.  Denote by $\opnorm{\ell}{H^{1,2}(\mu)}$
the norm of $\ell$. 
Observe that
\begin{equation} \label{f: ellB}
\begin{aligned}
\norm{\ell^B}{\ldO{B, {\mu}}}
& = \sup_{\norm{f}{\ldO{B}=1}} \Bigmod{\int_B \ell^B \, f \wrt {\mu }} \\
& \leq \sup_{\norm{f}{\ldO{B}=1}} \, \opnorm{\ell}{H^{1,2}({\mu})} \,
     \norm{f}{H^{1,2}({\mu})}  \\
& \leq \mu (B)^{1/2} \, \opnorm{\ell}{H^{1,2}({\mu})},
\end{aligned}
\end{equation}
the last inequality being a consequence of (\ref{f: basic estimate atom}).

For every $f$ in $BMO^{r'}({\mu})$ and every finite linear combination
$g$ of $(1,r)$-atoms the integral $\int_{\BR^d} f\, g \wrt{\mu }$ is convergent.
Denote by $H_{\mathrm{fin}}^{1,r}({\mu})$ the subspace of $H^{1,r}({\mu})$
consisting of all finite linear combinations of $(1,r)$-atoms.
Then $g \mapsto \int_{\BR^d} f\, g \wrt {\mu }$ defines a linear
functional on $H_{\mathrm{fin}}^{1,r}({\mu})$.  We observe that
$H_{\mathrm{fin}}^{1,r}({\mu})$ is dense in $H^{1,r}({\mu})$.

\begin{theorem} \label{t: duality}
Suppose that $r$ is in $(1,\infty)$.  The following hold
\begin{enumerate}
\item[\itemno1]
for every $f$ in $BMO^{r'}({\mu})$ the functional $\ell$, initially defined
on $H_{\mathrm{fin}}^{1,r}({\mu})$ by the rule
$$
\ell(g) = \int_{\BR^d} f\, g \wrt {\mu },
$$
extends to a bounded functional on $H^{1,r}({\mu})$.  Furthermore,
$$
\opnorm{\ell}{H^{1,r}({\mu})}
\leq \norm{f}{BMO^{r'}({\mu})};
$$
\item[\itemno2]
there exists a constant $C$ such that for every continuous linear functional
$\ell$ on $H^{1,r}({\mu})$ there exists a function $f^\ell$ in $BMO^{r'}({\mu})$ such that
$\norm{f^\ell}{BMO^{r'}({\mu})} \leq C \, \opnorm{\ell}{H^{1,r}({\mu})}$ and
$$
\ell(g) = \int_{\BR^d} f^\ell\, g \wrt {\mu }
\quant g \in H_{\mathrm{fin}}^{1,r}({\mu}).
$$
\end{enumerate}
\end{theorem}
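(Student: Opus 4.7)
For part~\rmi, I would verify the bound directly on a single atom. Given $f$ in $BMO^{r'}(\mu)$ and a $(1,r)$-atom $a$ supported in $B \in \cB_{b_0}$, the cancellation $\int a \wrt \mu = 0$ and H\"older's inequality yield
$$
\Bigmod{\int_M f a \wrt \mu}
= \Bigmod{\int_B (f-f_B)\, a \wrt \mu}
\leq \bignorm{f-f_B}{L^{r'}(B)}\, \norm{a}{L^r(B)}.
$$
The size condition on $a$ gives $\norm{a}{L^r(B)} \leq \mu(B)^{-1/r'}$, while the definition of $BMO^{r'}(\mu)$ gives $\bignorm{f-f_B}{L^{r'}(B)} \leq \mu(B)^{1/r'}\, \norm{f}{BMO^{r'}(\mu)}$, so the product is at most $\norm{f}{BMO^{r'}(\mu)}$. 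For $g = \sum_k \la_k\, a_k$ in $H_{\mathrm{fin}}^{1,r}(\mu)$, the triangle inequality and the infimum over admissible decompositions yield $|\ell(g)| \leq \norm{f}{BMO^{r'}(\mu)}\, \norm{g}{H^{1,r}(\mu)}$; density of $H_{\mathrm{fin}}^{1,r}(\mu)$ in $H^{1,r}(\mu)$ then gives the bounded extension, with the claimed norm estimate.

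For part~\rmii, the plan is to build $f^\ell$ from local Riesz representations on small balls followed by a gluing argument. For each $B \in \cB_{b_0}$, any mean-zero $h \in L^r(B)$ is a multiple of a $(1,r)$-atom, which gives $\norm{h}{H^{1,r}(\mu)} \leq \mu(B)^{1/r'}\norm{h}{L^r(B)}$; thus $\ell$ restricts to a bounded linear functional on the mean-zero subspace $L^r_0(B) \subset L^r(B)$, and the Riesz representation theorem produces a unique mean-zero $\ell^B \in L^{r'}(B)$ with $\ell(h) = \int \ell^B\, h \wrt \mu$ for all $h \in L^r_0(B)$. The critical compatibility is: if $B, B' \in \cB_{b_0}$ overlap, then both $\ell^B$ and $\ell^{B'}$ represent $\ell$ on $L^r_0(B \cap B')$, so $\ell^B - \ell^{B'}$ annihilates every mean-zero $L^r$-function on $B \cap B'$, forcing it to be a.e.\ constant there.

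To assemble a single function on $M$, cover $M$ by a countable chain $\set{B_n}_{n \in \BN}$ of balls in $\cB_{b_0}$ such that each $B_n$ intersects $\bigcup_{m<n} B_m$ (possible because (AMP), iterated, allows one to chain any two points via small hops). Set $f^\ell|_{B_0} = \ell^{B_0}$ and, inductively, $f^\ell|_{B_n} = \ell^{B_n} + c_n$ for the unique constant $c_n$ making $f^\ell$ single-valued on $B_n \cap \bigcup_{m<n} B_m$; well-definedness on higher-order overlaps follows from the cocycle identity $(\ell^B-\ell^{B'}) + (\ell^{B'}-\ell^{B''}) + (\ell^{B''}-\ell^B) = 0$. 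For the $BMO^{r'}(\mu)$-bound, note that on every $B \in \cB_{b_0}$ one has $f^\ell = \ell^B + c_B$ for some constant, and since $\ell^B$ has mean zero, $f^\ell - (f^\ell)_B = \ell^B$ on $B$; testing against $g \in L^r(B)$ with $\norm{g}{L^r(B)} \leq 1$, the identity $\int_B \ell^B g \wrt \mu = \int_B \ell^B (g - g_B) \wrt \mu = \ell(g - g_B)$ combined with $\norm{g-g_B}{H^{1,r}(\mu)} \leq 2\mu(B)^{1/r'}$ yields $\bignorm{\ell^B}{L^{r'}(B)} \leq 2 \mu(B)^{1/r'}\, \opnorm{\ell}{H^{1,r}(\mu)}$, and hence the required $BMO^{r'}(\mu)$ estimate. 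The representation $\ell(g) = \int f^\ell g \wrt \mu$ for $g \in H_{\mathrm{fin}}^{1,r}(\mu)$ is verified atom-by-atom: if $a$ is supported in $B$ with $\int a \wrt \mu = 0$, then $\int f^\ell a \wrt \mu = \int (\ell^B + c_B) a \wrt \mu = \int \ell^B a \wrt \mu = \ell(a)$.

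The main obstacle I anticipate is the gluing step: because $M$ is not globally doubling, no single ball carries a universal Riesz representation, so the local pieces $\ell^B$ must be patched by constants across a potentially complicated cover. The cocycle identity makes this consistent in principle, but the cover $\set{B_n}$ must be organized carefully (e.g., via a spanning tree of the nerve of the cover, or by processing successively larger bounded subregions) so that the inductive choice of $c_n$ closes unambiguously.
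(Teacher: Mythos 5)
Part~\rmi\ is correct and is the standard argument (H\"older against the mean-zero atom, then pass to general $g$); the paper merely cites Coifman--Weiss for it. The real issue is in the gluing step for part~\rmii, which you flag yourself. Choosing $c_n$ so that $f^\ell$ is single-valued on all of $B_n \cap \bigcup_{m<n} B_m$ is in general over-determined: that set may meet several earlier balls $B_m$, $B_{m'}$ with $B_n \cap B_m \cap B_{m'} = \emptyset$, and then the two pairwise overlaps impose possibly different values of $c_n$. The cocycle identity you invoke controls only triples with nonempty triple intersection, so it does not arbitrate between the components; a spanning tree of the nerve fixes each $c_n$ unambiguously but leaves consistency on off-tree edges entirely unverified. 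In cohomological terms, $\om(B,B') = \ell^B - \ell^{B'}$ defines a \v{C}ech $1$-cocycle whose trivialization is precisely the existence of $f^\ell$; there is no a priori reason for it to be a coboundary, and you supply none.

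The paper sidesteps the issue by dropping the restriction to balls in $\cB_{b_0}$. Since $H^{1,r}_b(\mu) = H^{1,r}(\mu)$ with equivalent norms for every $b > R_0/(1-\be)$ (Proposition~\ref{p: decphi}~\rmii), the inclusion estimate~(\ref{f: basic estimate atom}) extends to $L^r_0(B)$ for balls $B$ of arbitrary radius, so $\ell$ is bounded on each such subspace. Fixing a reference point $o$, one applies $L^r$--$L^{r'}$ duality to the restriction of $\ell$ to the \emph{nested} family $L^r_0(B(o,b))$, normalizes each representative $f^\ell_b$ by the single constant $\eta_b = -\mu(B(o,1))^{-1}\int_{B(o,1)} f^\ell_b \wrt\mu$, and sets $f^\ell = f^\ell_b + \eta_b$ on $B(o,b)$. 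Because any two of these balls are nested, consistency of the definition is immediate and there is no cohomological obstruction to dodge. Your final $BMO^{r'}$ estimate then goes through exactly as you wrote it: on a ball $B\in\cB_{b_0}$ one has $f^\ell - (f^\ell)_B = \ell^B$, and your bound $\norm{\ell^B}{L^{r'}(B)} \leq 2\,\mu(B)^{1/r'}\,\opnorm{\ell}{H^{1,r}(\mu)}$ yields $N^{r'}_{b_0}(f^\ell) \leq 2\,\opnorm{\ell}{H^{1,r}(\mu)}$.
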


\begin{proof}
The proof of \rmi\ follows the line of the proof of \cite{CW}
which is based on the classical result of C.~Fefferman \cite{F, FS}.
We omit the details.

Now we prove \rmii\ in the case where $r$ is equal to $2$.
The proof for $r$ in $(1,\infty)\setminus\{2\}$
is similar and is omitted.

Recall that for each $b> R_0/(1-\be)$ 
the space $H^{1,2}(\mu)$ is isomorphic to $H_b^{1,2}(\mu)$ 
with norm $\norm{\cdot}{H_b^{1,2}(\mu)}$.
Thus, we may interpret $\ell$ as a continuous linear functional
on $H_b^{1,2}(\mu)$ for each $b>R_0/(1-\be)$.  
Fix a reference point $o$ in $M$. 
For each $b$ there exists a function $f_b^\ell$ in 
$L^2_{0}(B(o,b))$ that represents $\ell$ in $B(o,b)$.
Since both $f_1^\ell$ and the restriction of $f_b^\ell$
represent $\ell$ on $B(o,1)$, there exists a constant
$\eta_b$ such that  
$$
f_1^\ell-f_b^\ell = \eta_b
$$
on $B(o,1)$.  By integrating both sides of this equality on $B(o,1)$
we see that 
$$
\eta_b 
= -\frac{1}{\mu\bigl(B(o,1)\bigr)} \int_{B(o,1)} f_b^\ell \wrt \mu.
$$
Define 
$$
f^{\ell} (x) = f_b^{\ell}(x) + \eta_b
\quant x \in B(o,b) \quant b \in [1,\infty).
$$ 
It is straightforward to check that this is a good definition.
We claim that the function $f^\ell$ is
in $BMO({\mu})$ and there exists a constant~$C$ such that
$$
\norm{f^{\ell}}{BMO({\mu})}
\leq C \, \opnorm{\ell}{H^{1,2}({\mu})^*}
\quant \ell \in H^{1,2}({\mu})^*.
$$
Indeed, choose a ball $B$ in $\cB_{b_0}$.  
Then there exists a constant $\eta^B$ such that
\begin{equation} \label{f: fell on B}
f^\ell \big\vert_B = \ell^B + \eta^B,
\end{equation}
where $\ell^B$ is in $\ldO{B}$ and represents the 
restriction of $\ell$ to $\ldO{B}$.
By integrating both sides on $B$, we see that $\eta^B = \bigl(f^\ell)_B$.
Then, by (\ref{f: fell on B}),
$$
\begin{aligned}
\Bigl( \frac{1}{\mu (B)} \int_B \bigmod{f^\ell - \bigl(f^\ell\bigr)_B}^2
\wrt {\mu } \Bigr)^{1/2}
& = \Bigl( \frac{1}{\mu (B)} \int_B \bigmod{\ell^B}^2
      \wrt {\mu } \Bigr)^{1/2} \\
& \leq \opnorm{\ell}{H^{1,2}({\mu})}, 
\end{aligned}
$$
so that $N_{b_0}^2(f^\ell) \leq \opnorm{\ell}{H^{1,2}({\mu})}$,
as required.
\end{proof}

\begin{remark}
Note that the proof of Theorem~\ref{t: duality} does not
apply, strictly speaking, to the case where $r$ is equal to
$\infty$.  However, a straightforward, though tedious, adaptation to the case
where $\mu$ is only locally doubling
of a classical result \cite{CW}, show that $H^{1,\infty}(\mu)$
and $H^{1,2}(\mu)$ agree, with equivalence of norms. 
Consequently, the dual space of $H^{1,\infty}(\mu)$ is $BMO(\mu)$. 
\end{remark}

\section{Estimates for the sharp function and interpolation}
\label{s: sharp}

The main step in the proof of Fefferman--Stein's interpolation
result for analytic families of operators is a certain
relative distributional inequality (also referred to as ``good $\la$
inequality'' in the literature) \cite[Thm~5, p.~153]{FS}, \cite{St2},
which is a modified
version of the original relative distributional inequality of D.L.~Burkholder
and R.F.~Gundy \cite{BG, Bur} for martingales.

Extensions of Fefferman--Stein's distributional inequality
to spaces of homogeneous type are available in the literature
(see, e.g., Mac{\'\i}as' thesis \cite{Ma}).  It may be worth
observing that the doubling property plays a key r\^ole in their
proof.
An extension of this theory to rank one symmetric spaces of the
noncompact type is due to Ionescu \cite{I}.
In this section we adapt Ionescu's ideas and arguments to
our setting.

For each integer $k$, and each locally
integrable function $f$,
the \emph{noncentred dyadic local Hardy--Littlewood maximal function}
$\cM_kf$ is defined by
\begin{equation} \label{f: HL}
\cM_k f(x)
= \sup_{Q} \frac{1}{\mu (Q)} \int_Q \mod{f} \wrt \mu 
\quant x \in M,
\end{equation}
where the supremum is taken over all dyadic cubes of 
resolution $\geq k$ that contain $x$.

For each $p$ in $M$ we denote by  $\cB_b(p)$ 
the subcollection of all balls in $\cB_b$ which contain $p$.
For each $b$ in $\BR^+$ we define a \emph{local sharp function}
$f^{\sharp,b}$ of a locally integrable function $f$ thus:
$$
f^{\sharp,b}(p)
= \sup_{B\in\cB_b (p)} \frac{1}{\mu (B)} \int_B \mod{f-f_B} \wrt \mu 
\quant p \in M.
$$
Observe that $f$ is in $BMO(\mu)$ if and only if $\norm{f^{\sharp,b}}{\infty}$
is finite for some (hence for all) $b$ in $(R_0/(1-\be),\infty)$.

We shall need the following result, whose proof, \emph{mutatis mutandis},
is the same as that of its Euclidean analogue.

\begin{theorem} \label{t: local HL}
Suppose that $k$ is an integer.  Then the noncentred dyadic local
Hardy--Littlewood maximal operator
$\cM_k$ is bounded on $\lp{\mu}$ for every $p$ in $(1,\infty]$ and of
weak type~$1$.
\end{theorem}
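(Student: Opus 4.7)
The plan is to reduce the statement on the non-doubling space $M$ to a uniform family of statements on the doubling spaces $(Q,\rho_{\vert Q},\mu_{\vert Q})$ provided by Proposition~\ref{p: further prop}~\rmii, and then piece these together using the disjointness of dyadic cubes of a fixed resolution.

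The key observation is that, by Theorem~\ref{t: dyadic cubes}~\rmii, for each point $x$ there is a (unique, up to the deleted null set) maximal dyadic cube of resolution $k$ containing $x$, call it $Q_{\al(x)}^k$; moreover every dyadic cube $Q$ of resolution $\geq k$ which contains $x$ must be a subset of $Q_{\al(x)}^k$. Consequently
$$
\cM_k f(x)
= \sup \Bigset{\frac{1}{\mu(Q)} \int_Q \mod{f} \wrt \mu :
    Q \in \bigcup_{j\geq k} \cQ^j,\ x \in Q \subset Q_{\al(x)}^k},
$$
which is precisely the noncentred dyadic Hardy--Littlewood maximal function of $f_{\vert Q_{\al(x)}^k}$ evaluated at $x$ \emph{inside the doubling metric measure space} $(Q_{\al(x)}^k,\rho_{\vert Q_{\al(x)}^k},\mu_{\vert Q_{\al(x)}^k})$. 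By Proposition~\ref{p: further prop}~\rmii\ (applied with $\nu=k$), the doubling constants of all spaces $(Q_\al^k,\rho,\mu)$, $\al \in I_k$, are majorised by a single constant $D$ depending only on $k$ and on the structural constants of $M$.

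Next I would invoke the standard theory of the dyadic Hardy--Littlewood maximal operator on a space of homogeneous type: a routine Calder\'on--Zygmund stopping-time argument on $Q_\al^k$, using only the uniform doubling constant $D$, yields a constant $C=C(D)$ such that, for every $f\in \lu\mu$ and every $\la>0$,
$$
\mu\bigl(\bigset{x\in Q_\al^k : \cM_k f(x) > \la}\bigr)
\leq \frac{C}{\la} \int_{Q_\al^k} \mod f \wrt \mu
\quant \al \in I_k.
$$
Since $\bigcup_{\al\in I_k} Q_\al^k$ has full measure in $M$ by Theorem~\ref{t: dyadic cubes}~\rmi, and the cubes are mutually disjoint by Theorem~\ref{t: dyadic cubes}~\rmii, summation over $\al\in I_k$ yields the weak type $(1,1)$ inequality for $\cM_k$ on $M$ with constant $C$.

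The $\ly\mu$ bound is trivial, since $\cM_k f(x)\leq \norm{f}{\infty}$ pointwise. The $\lp\mu$ bound for $p\in(1,\infty)$ then follows from Marcinkiewicz interpolation. The only point requiring any care is to verify that the stopping-time construction underlying the weak type $(1,1)$ estimate on $Q_\al^k$ really depends only on $D$ and not on other features of $Q_\al^k$; but this is standard once one notes that all cubes appearing in the definition of $\cM_k$ are themselves dyadic cubes of the ambient space, and hence their nested/disjoint structure (Theorem~\ref{t: dyadic cubes}~\rmii) is inherited automatically. There is no genuine obstacle; the content of the proof is the reduction to the homogeneous-type setting via the maximal cubes $Q_\al^k$, and everything else is a transcription of the classical argument.
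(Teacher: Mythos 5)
Your argument is correct, but the reduction to the homogeneous-type spaces $(Q_\al^k,\rho,\mu)$ and the appeal to their uniform doubling constant from Proposition~\ref{p: further prop}~\rmii\ are not actually needed, and are not what ``\emph{mutatis mutandis} the same as the Euclidean analogue'' is pointing at. Since $\cM_k$ is a \emph{purely dyadic} maximal operator, the Calder\'on--Zygmund stopping-time argument runs directly on $M$ and gives weak type $(1,1)$ with constant $1$, with no doubling input whatsoever: for $\la>0$, because the resolution is bounded below by $k$, every $x$ with $\cM_k f(x)>\la$ lies in a maximal dyadic cube $Q$ of resolution $\geq k$ satisfying $\mu(Q)^{-1}\int_Q|f|\,\wrt\mu>\la$; by Theorem~\ref{t: dyadic cubes}~\rmii\ these maximal cubes are pairwise disjoint and their union is exactly $\{\cM_k f>\la\}$, whence $\mu(\{\cM_k f>\la\})=\sum_j\mu(Q_j)\leq\la^{-1}\sum_j\int_{Q_j}|f|\,\wrt\mu\leq\la^{-1}\norm{f}{\lu\mu}$. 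What the lower bound $k$ on the resolution actually does is guarantee the existence of the maximal (i.e.\ largest) stopping cube containing a given point, which is exactly the role played by the restriction to one dyadic grid in the Euclidean case. Your decomposition into the top-level cubes $Q_\al^k$ is a harmless repackaging of this, but your phrase ``using only the uniform doubling constant $D$, yields a constant $C=C(D)$'' misattributes to doubling something that comes for free from the nested/disjoint structure you yourself point to at the end; the sharper statement is that the weak $(1,1)$ constant is $1$, independent of $k$ and of all structural constants. The $\ly\mu$ bound and Marcinkiewicz interpolation are handled exactly as you say.
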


\begin{lemma} \label{l: rdi}
Define constants $C_0$, $b'$, $\si$ and $D$ by
$$
C_0 = \max(C_1/\de,\de),
\quad
b' = \max(b_0, 2 C_1+C_0),
\quad
\si = \bigl(1-\e^{-I_M\, \de^3}\bigr)/2
\quad\hbox{and}\quad
D =D_{b'/a_0,a_0}, 
$$
where $a_0$, $C_1$ and $\de$ are as in Theorem~\ref{t: dyadic cubes},
and $D_{b'/a_0,a_0}$ is defined in Remark~\ref{r: geom I}.
For every $\eta'$ in $(0,1)$, for all  positive $\vep<(1-\eta')/(2D)$,
and for every $f$ in $\lu{\mu}$
$$
\mu \bigl(\{ \cM_{2}f >\al, \, f^{\sharp,b'} \leq \vep\, \al\}\bigr)
\leq \eta \, \mu \bigl(\{ \cM_{2}f >\eta'\, \al\}\bigr)
\quant \al \in \BR^+,
$$
where 
$$
\eta=
1-\si + \smallfrac{2\vep \,D}{\si \,(1-\eta')}.
$$
\end{lemma}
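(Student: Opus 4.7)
The plan is to perform a Calder\'on--Zygmund stopping-time decomposition of $F_{\eta'\al}:=\{\cM_2 f>\eta'\al\}$ in the dyadic cubes of $M$, combine it with the covering property (Proposition~\ref{p: covering}) at scale $\kappa=\de^3$, and bound the bad set inside each stopping cube by means of the sharp-function hypothesis together with the dyadic weak-$(1,1)$ estimate for~$\cM_2$. We may assume $\mu(F_{\eta'\al})<\infty$. Every point of $F_{\eta'\al}$ belongs to a dyadic cube of resolution at least~$2$ on which~$|f|$ has mean exceeding~$\eta'\al$, so the collection $\cC$ of such cubes covers~$F_{\eta'\al}$.

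Applying Proposition~\ref{p: covering} to $A=F_{\eta'\al}$ with $\kappa=\de^3$ and $\nu=2$ yields pairwise disjoint cubes $Q_1,\dots,Q_N$ in~$\cC$ with $\rho(Q_j,F_{\eta'\al}^c)\le\de^3$ and $\sum_j\mu(Q_j)\ge\si\,\mu(F_{\eta'\al})$, so that the residual $F_{\eta'\al}\setminus\bigcup_j Q_j$ has measure at most $(1-\si)\mu(F_{\eta'\al})$; this is the $1-\si$ term in~$\eta$. The maximality built into Proposition~\ref{p: covering} implies that every strict dyadic ancestor of~$Q_j$ of resolution $\ge 2$ has $|f|$-mean at most~$\eta'\al$, so that for each $x\in Q_j$ with $\cM_2 f(x)>\al$ the witnessing dyadic cube of resolution $\ge 2$ must be $Q_j$ itself or a descendant, hence is contained in~$Q_j$.

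For each $j$ with $\{f^{\sharp,b'}\le\vep\al\}\cap Q_j$ nonempty, fix a point $x_0$ in this intersection and a point $y_j\in F_{\eta'\al}^c$ with $\rho(y_j,Q_j)\le\de^3$; let $Q^*_j$ denote the dyadic cube of resolution~$2$ containing~$y_j$, for which $|f_{Q^*_j}|\le\eta'\al$ since $\cM_2 f(y_j)\le\eta'\al$. The constants $C_0=\max(C_1/\de,\de)$, $b'=\max(b_0,2C_1+C_0)$ and $D=D_{b'/a_0,a_0}$ in the statement are arranged so that one can choose a ball $\tilde B_j$ of radius at most~$b'$ containing both $Q_j$ and $Q^*_j$, with $\mu(\tilde B_j)\le D\,\mu(Q_j)$ via local doubling against the interior ball $B(z_{Q_j},a_0\de^{k_j})\subseteq Q_j$ (here the bound $k_j\ge 2$ is essential). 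The sharp-function bound $\mu(\tilde B_j)^{-1}\int_{\tilde B_j}|f-f_{\tilde B_j}|\le\vep\al$ at~$x_0$ transfers the control of $|f_{Q^*_j}|$ to $|f_{\tilde B_j}|$, and together with the hypothesis $\vep<(1-\eta')/(2D)$ it keeps $|f_{\tilde B_j}|$ strictly below~$\al$ with a margin proportional to $(1-\eta')\al$.

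Decompose $f=(f-f_{\tilde B_j})\One_{\tilde B_j}+f_{\tilde B_j}\One_{\tilde B_j}+f\One_{\tilde B_j^c}$. On $Q_j$ the middle piece contributes at most $|f_{\tilde B_j}|$ to $\cM_2 f$, and the third at most $\eta'\al$ because any witness cube is either contained in $\tilde B_j$ (annihilating $f\One_{\tilde B_j^c}$) or is a strict ancestor of $Q_j$ of resolution $\ge 2$, which has mean $\le\eta'\al$. Hence at every bad~$x$ one has $\cM_2((f-f_{\tilde B_j})\One_{\tilde B_j})(x)$ exceeding a positive multiple of $(1-\eta')\al$; the weak-$(1,1)$ estimate for the dyadic maximal function (Theorem~\ref{t: local HL}, with constant~$1$) combined with the sharp-function bound $\int_{\tilde B_j}|f-f_{\tilde B_j}|\le\vep\al\,\mu(\tilde B_j)$ then gives a per-cube estimate of the form $C\vep D\,\mu(Q_j)/(1-\eta')$. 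Summing over~$j$ and adding the residual $(1-\si)\mu(F_{\eta'\al})$ produces the asserted inequality. The main obstacle is the simultaneous control, in the construction of~$\tilde B_j$, of its radius under~$b'$, of $|f_{\tilde B_j}|$ (so as to leave the margin $(1-\eta')\al$ in which the hypothesis on~$\vep$ is used), and of the ratio $\mu(\tilde B_j)/\mu(Q_j)$ uniformly in the stopping scale $k_j\ge 2$; the definitions of $C_0$, $b'$ and $D$ in the statement are calibrated precisely so that these three constraints are compatible.
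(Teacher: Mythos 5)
Your outer strategy matches the paper's: stopping-time dyadic cubes of resolution $\ge 2$ on which the mean of $|f|$ exceeds $\eta'\al$, an application of Proposition~\ref{p: covering} with $\kappa=\de^3$ producing the $(1-\si)$-term, and then a per-cube estimate on each selected $Q_j$. The genuine gap is in your construction of the comparison cube $Q^*_j$: you always take it at \emph{resolution $2$}. When $Q_j$ has resolution $k_j>2$, a ball $\wt B_j$ containing both $Q_j$ and a resolution-$2$ cube must have radius comparable to $\de^2$, whereas the inscribed ball $B(z_{Q_j},a_0\de^{k_j})\subset Q_j$ has radius $a_0\de^{k_j}\ll\de^2$; the radius ratio is of order $\de^{2-k_j}/a_0$, which is unbounded in $k_j$, so the local-doubling bound $\mu(\wt B_j)\le D_{b'/a_0,a_0}\,\mu(Q_j)$ you invoke is false in general. (Conversely, if you shrink $\wt B_j$ to radius $b'\de^{k_j}$ so that the doubling constant $D$ applies, then $\wt B_j$ no longer contains $Q^*_j$.) You correctly identify this three-way compatibility --- radius $\le b'$, control of $|f_{\wt B_j}|$, and the ratio $\mu(\wt B_j)/\mu(Q_j)$ --- as "the main obstacle," but the construction you propose does not overcome it.

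The paper resolves it with a case split on the resolution $\nu_0$ of $Q=Q_j$ that you omit. For $\nu_0=2$ it uses the covering bound $\rho(Q,A^c)\le\de^3\le C_0\de^2$ to find a resolution-$2$ cube $\wt Q$ with $\rho(Q,\wt Q)\le C_0\de^2$ and $|f|_{\wt Q}\le\eta'\al$. For $\nu_0\ge 3$ the covering distance $\de^3$ may be $\gg\de^{\nu_0}$ and is useless; the paper instead exploits the maximality of $Q$: the father $Q^\sharp$, of resolution $\nu_0-1\ge2$, has mean $\le\eta'\al$, and inside it one finds a resolution-$\nu_0$ cube $\wt Q$ with $|f|_{\wt Q}\le\eta'\al$ and $\rho(Q,\wt Q)\le\diam(Q^\sharp)\le(C_1/\de)\de^{\nu_0}\le C_0\de^{\nu_0}$. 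Crucially, $\wt Q$ has the \emph{same} resolution as $Q$, so the ball $B^*$ of radius $b'\de^{\nu_0}$ centred in $Q$ contains both $Q$ and $\wt Q$ and satisfies $\mu(B^*)\le D\,\mu(Q)$ and $\mu(B^*)\le D\,\mu(\wt Q)$ uniformly --- this is exactly what the definitions of $C_0$, $b'$ and $D$ are tuned for. Once this is repaired, your per-cube argument (Calder\'on--Zygmund splitting of $f$ on $\wt B_j$ plus the dyadic weak-$(1,1)$ bound for $\cM_2$) is a genuine alternative to the paper's second application of Proposition~\ref{p: covering} with its pair of direct inequalities for $|f_{B^*}|$; it even avoids the extra factor $\si^{-1}$, giving $\frac{2\vep D}{1-\eta'}$ in place of the paper's $\frac{2\vep D}{\si(1-\eta')}$, which still yields the stated inequality since the asserted $\eta$ is only larger.
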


\begin{proof}
For each $\be>0$ we denote by $A(\be)$ and $S(\be)$ the sets 
$\{\cM_{2}f>\be\}$ and $\{f^{\sharp,b'} >\be\}$ respectively.
The inequality to prove may then be rewritten as follows:
$$
\mu \bigl(A({\al})\cap S({\vep\al})^c \bigr)
\leq \eta\, \mu \bigl(A({\eta'\al}) \bigr)
\quant \al\in \BR^+.
$$

To each $x$ in $A({\eta'\al})$ we associate the maximal
dyadic cube $Q_x$ containing $x$ of resolution at least~$2$ 
such that $\mod{f}_{Q_x} > \eta'\al$.  Here $\mod{f}_{Q_x}$ 
denotes the average of $\mod{f}$ on the cube $Q_x$.
We denote by $\cC_{\eta'\al}$ 
the collection of cubes $\{Q_x\}_{x\in A({\eta'\al})}$. 
Clearly $A(\eta'\al) = \bigcup_{x\in A(\eta'\al)} Q_x$,
and $\mu\bigl(A(\eta'\al)\bigr)<\infty$, because $\cM_2$ is of
weak type 1.
By Proposition~\ref{p: covering} (with $\kappa = \de^3$)
there exist mutually disjoint cubes
$Q_1,\ldots,Q_k$ in $\cC_{\eta'\al}$ such that
$\rho\bigl(Q_{j},A(\eta'\al)^c\bigr) \leq \de^3$ and
\begin{equation} \label{f: estimate from below}
\sum_{j=1}^k \mu (Q_j) \geq \si \, \mu \bigl(A(\eta'\al)\bigr).
\end{equation}

We claim that if $0<\vep<(1-\eta')/(2D)$ then
\begin{equation} \label{f: claim balls}
\mu \bigl(Q_j\cap A(\al) \cap S(\vep\al)^c \bigr)
\leq 
\smallfrac{2\vep \,D}{\si \,(1-\eta')}
\, \mu \bigl(Q_j \bigr)
\quant j \in \{1,\ldots,k\}.
\end{equation}
We postpone for a moment the proof of the claim and show how
(\ref{f: claim balls}) implies the required conclusion.
Observe that $A(\al)  \subset A(\eta'\al)$ and that
$$
\begin{aligned}
\mu \bigl(A(\al) \cap S(\vep\al)^c \bigr)
& = \mu \Bigl(\bigl(A(\eta'\al) \setminus
       \bigcup_{j=1}^k Q_j\bigr)\cap A(\al) \cap S(\vep\al)^c \Bigr)
    + \mu \Bigl(\bigl(\bigcup_{j=1}^k Q_j\bigr)
        \cap A(\al) \cap S(\vep\al)^c \Bigr) \\
& \leq (1-\si) \, \mu \bigl(A(\eta'\al)\bigr) + 
    \smallfrac{2\vep \,D}{\si \,(1-\eta')} \, \sum_{j=1}^k \mu (Q_j) \\
& \leq \eta \, \mu \bigl(A(\eta'\al)\bigr).  \\
\end{aligned}
$$
The penultimate inequality is a consequence of (\ref{f: estimate from below})
and of (\ref{f: claim balls}), and
the last inequality follows from the fact that the $Q_j$'s are mutually
disjoint cubes contained in $A(\eta'\al)$.

Thus, to conclude the proof of the lemma it remains to
prove the claim (\ref{f: claim balls}).
For the rest of the proof
we shall denote any of the cubes $Q_1,\ldots,Q_k$ simply by $Q$.
Denote by $\nu_0$ the resolution of $Q$. 

We claim that there exists a dyadic cube $\wt {Q}$ of 
the same resolution as $Q$ such that 
$\mod{f}_{\wt {Q}}\le\eta'\al$ 
and $\rho (Q,\wt {Q})\le C_0\, \de^{\nu_0}$.

We treat the cases where $\nu_0$ is equal to $2$ or $\geq 3$
separately.

Suppose that $\nu_0=2$.
Since $\rho\big(Q,A(\eta'\al)^c)\big)\le  \de^3$, 
there exists a point $y$ in $A(\eta'\al)^c$ such that 
$\rho(Q,y)\le  \de^3$.  Denote by $\wt {Q}$ the dyadic
cube with resolution $2$ which contains $y$. 
Then $\rho(Q,\wt {Q})\le \de^3\le C_0 \de^2$ 
and $\mod{f}_{\wt {Q}}\le\eta'\al$, because 
$\wt {Q}\cap A(\eta'\al)^c\not=\emptyset$.

Now suppose that $\nu_o\ge3$.  Then the father $Q^\sharp$ 
of $Q$ contains a point $y$ in $A(\eta'\al)^c$,
for otherwise $Q^\sharp$ would be contained in $A(\eta'\al)$,
thereby contradicting the maximality of $Q$. 
Denote by $\wt {Q}$ the dyadic cube of resolution $\nu_0$ which contains
$y$. 
Then $y$ is in $\wt {Q}\cap Q^\sharp$ 
and therefore $\wt {Q}\subset Q^\sharp$. 
Thus 
$$
\rho(Q,\wt {Q})
\le \, \diam(Q^\sharp)
\le C_1 \,  \de^{\nu_0-1}
\le C_0  \, \de^{\nu_0}
$$ 
and $\mod{f}_{\wt {Q}}\le\eta'\al$, 
because $\wt {Q}\cap A(\eta'\al)^c\not=\emptyset$.
This completes the proof of the claim. 

To each point $y$ in $Q\cap A(\al)$ we associate a maximal
dyadic cube $Q_y'$ of resolution at least $2$
containing $y$ such that $\mod{f}_{Q_y'}>\al$.
Denote $\cC'$ the collection of all these cubes.
By Proposition~\ref{p: covering} we may select mutually disjoint
cubes $Q_1',\ldots,Q_{k'}'$ in $\cC'$ such that
\begin{equation} \label{f: measure B'}
\sum_{j=1}^{k'} \mu (Q_j')
\geq \si \, \mu \bigl(Q\cap A(\al)\bigr).
\end{equation}
Note also that $Q_1',\ldots,Q_{k'}'$ are contained in $Q$. 
Denote by $B^*$ a ball with centre at a point of $Q$ 
and radius $b'\de^{\nu_0}$ (recall that $b'\geq 2C_1+C_0)$. 
Then $B^*$ contains both $Q$, whence the cubes $Q'_1,\ldots,Q'_{k'}$, 
and $\widetilde{Q}$.  Hence 
\begin{equation}\label{B^*Q}
\mu(B^*)\le D\, \mu(Q)\qquad\hbox{and} \qquad
\mu(B^*)\le D\, \mu(\widetilde{Q})
\end{equation}
by the local doubling property. 

If $Q\cap A(\al)\cap S(\vep\al)^c$ is nonempty, then
\begin{equation} \label{f: if nonempty}
\int_{B^*} \mod{f-f_{B^*}} \wrt \mu 
\leq \vep \, \al\, \mu (B^*).
\end{equation}
Since $\wt Q \subset B^*$ and $\mod{f}_{\wt Q}\leq \eta'\,\al$,
$$
\mu (\wt Q) \, \bigl(\mod{f_{B^*}} - \eta' \, \al\bigr)
\leq \int_{B^*} \mod{f-f_{B^*}} \wrt \mu 
$$
by the triangle inequality.
Now (\ref{f: if nonempty}) implies that

\begin{equation} \label{f: eliminating I}
\mu (\wt Q) \, \bigl(\mod{f_{B^*}} - \eta' \, \al\bigr)
\leq \vep \, \al\, \mu (B^*).
\end{equation}
By a similar argument
\begin{equation} \label{f: eliminating II}
\bigl(\al- \mod{f_{B^*}} \bigr) \sum_{j=1}^{k'} \mu (Q_j') \,
\leq \vep \, \al\, \mu (B^*).
\end{equation}
Fr{}om (\ref{f: eliminating I}) we see that
$\mod{f_{B^*}} \leq \al \, \bigl(\eta'+\vep \,
\smallfrac{\mu (B^*)}{\mu (\wt Q)}\bigr)$.
By inserting this inequality in (\ref{f: eliminating II}),
we obtain that
$$
\Bigl(1-\eta'-\vep\,\frac{\mu (B^*)}{\mu (\wt Q)} \Bigr)
\sum_{j=1}^{k'} \mu (Q_j')
\leq \vep \, \mu (B^*),
$$
whence
$$
\si \, \Bigl(1-\eta'-\vep\,\frac{\mu (B^*)}{\mu (\wt Q)} \Bigr)
\, \mu (Q\cap A(\al) \cap S(\vep\al)^c)
\leq \vep \, \mu (B^*),
$$
by (\ref{f: measure B'}).  
Now, since $\vep<(1-\eta')/(2D)$, we may use (\ref{B^*Q}) and conclude that
$$
\, \mu (Q\cap A(\al) \cap S(\vep\al)^c)
\leq \frac{2\vep \,D}{\si \,(1-\eta')}\, \mu (Q),
$$
as required.
\end{proof}

\begin{theorem} \label{t: basic}
For each $p$ is in $(1,\infty)$
there exists a positive constant $C$ such that
$$
\norm{f^{\sharp,b'}}{\lp{\mu}}
\geq C\, \norm{f}{\lp{\mu}}
\quant f \in \lp{\mu},
$$
where $b' = \max( b_0, 2C_1+C_0)$ is as in the statement of 
Lemma~\ref{l: rdi}.
\end{theorem}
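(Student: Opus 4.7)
The plan is to deduce $\norm{f}{\lp{\mu}} \leq C^{-1} \, \norm{f^{\sharp,b'}}{\lp{\mu}}$ by running the classical Fefferman--Stein integration trick, using the relative distributional inequality of Lemma~\ref{l: rdi} in place of the Euclidean good-$\la$ inequality, and then passing from $\cM_{2} f$ to $f$ via a pointwise domination.

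First I would fix $f$ in $\lp{\mu}$ and note, crucially, that $\cM_{2} f$ is in $\lp{\mu}$ by Theorem~\ref{t: local HL}; the finiteness of $\norm{\cM_{2} f}{\lp{\mu}}$ is what legitimises subtracting two copies of this norm at a later step. Applying the layer cake formula to the sets $A(\al) = \{\cM_{2} f > \al\}$ and $S(\vep\al) = \{f^{\sharp,b'} > \vep\al\}$, using the trivial splitting $\mu(A(\al)) \leq \mu(A(\al) \cap S(\vep\al)^c) + \mu(S(\vep\al))$, invoking Lemma~\ref{l: rdi} on the first summand, and then changing variables $\al \mapsto \al/\eta'$ and $\al \mapsto \al/\vep$ respectively, I expect to obtain
$$
\norm{\cM_{2} f}{\lp{\mu}}^p
\leq \eta \, (\eta')^{-p} \, \norm{\cM_{2} f}{\lp{\mu}}^p
+ \vep^{-p} \, \norm{f^{\sharp,b'}}{\lp{\mu}}^p,
$$
where $\eta = 1-\si + 2\vep\, D / \bigl(\si (1-\eta')\bigr)$ is the constant produced by Lemma~\ref{l: rdi}.

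The heart of the argument is the calibration of $\eta'$ and $\vep$. Since $\si > 0$, I would first fix $\eta'$ in $(0,1)$ close enough to $1$ that $(1-\si)/(\eta')^p < 1$, and then choose $\vep$ in $\bigl(0, (1-\eta')/(2D)\bigr)$ small enough that $\eta/(\eta')^p < 1$ as well (possible because, with $\eta'$ now fixed, $\eta \to 1-\si$ as $\vep \to 0$). With these choices the $\cM_{2} f$ term on the right can be absorbed into the left, giving
$$
\norm{\cM_{2} f}{\lp{\mu}}
\leq C' \, \norm{f^{\sharp,b'}}{\lp{\mu}}
$$
with a constant $C'$ depending only on $p$ and the structural constants of $M$.

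To close, I would use the pointwise inequality $\mod{f(x)} \leq \cM_{2} f(x)$ for $\mu$-almost every $x$, which follows from a Lebesgue differentiation argument along shrinking dyadic cubes: the cubes in $\cQ^k$ have diameter at most $C_1 \de^k$ and contain balls of radius $a_0 \de^k$ by Theorem~\ref{t: dyadic cubes}~\rmiv--\rmv, and the local doubling property supplies the Vitali-type control needed to run the usual maximal-function argument. Combining this with the previous estimate yields the claim. The main obstacle I anticipate is the parameter selection: the distributional inequality forces $\vep$ to be small, which in turn forces $\eta'$ to be close to $1$ so that $(1-\si)/(\eta')^p$ stays strictly below $1$; this delicate balancing is possible only because property \PP\ guarantees that the isoperimetric gain $\si = (1-\e^{-I_M \de^3})/2$ is \emph{strictly} positive.
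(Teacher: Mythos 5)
Your proposal is correct and follows essentially the same route as the paper: reduce to $\norm{\cM_{2}f}{\lp{\mu}} \leq C'\,\norm{f^{\sharp,b'}}{\lp{\mu}}$ via the pointwise bound $\mod{f}\leq \cM_{2}f$ a.e., run the layer-cake/good-$\la$ argument with Lemma~\ref{l: rdi}, absorb the $\cM_{2}f$ term on the right after calibrating $\eta'$ and $\vep$, and conclude. The paper makes the same two-stage parameter choice (it fixes $\eta'^p = 1-\si/4$ and then shrinks $\vep$ so that $\eta\leq 1-\si/2$), whereas you state the selection abstractly, but the underlying logic and the use of $\si>0$ are identical.
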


\begin{proof}
Observe that it suffices to show that
\begin{equation} \label{f: sharp max}
\norm{f^{\sharp,b'}}{\lp{\mu}}
\geq C\, \norm{\cM_{2}f}{\lp{\mu}},
\end{equation}
because $\cM_{2}f \geq \mod{f}$ by the differentiation theorem
of the integral, which is a standard consequence of Proposition~\ref{t:
local HL}.

Let $\eta$ and $\eta'$ be as in the statement of Lemma~\ref{l: rdi}.
By Lemma~\ref{l: rdi}
$$
\begin{aligned}
\norm{\cM_{2}f}{\lp{\mu}}^p
& = p \ioty \al^{p-1}\, \mu \bigl(A(\al) \bigr)\wrt \al  \\
& = p \ioty \al^{p-1}
        \, \mu \bigl(A(\al) \cap S(\vep\al)^c\bigr )\wrt \al
      + p \ioty \al^{p-1} \, \mu \bigl(A(\al) \cap S(\vep\al)\bigr)
        \wrt \al \\
& \leq  p\, \eta \ioty \al^{p-1}
        \, \mu \bigl(A(\eta'\al)\bigr) \wrt \al
      + p \ioty \al^{p-1}
        \, \mu \bigl(S(\vep\al)\bigr )\wrt \al \\
& =  p\, \eta\, \eta'^{-p} \ioty \be^{p-1}
        \, \mu \bigl(A(\be)\bigr )\wrt \be
      + p\,  \vep^{-p} \ioty \be^{p-1}
        \, \mu \bigl(S(\be)\bigr )\wrt \be \\
& \leq  \eta\, \eta'^{-p} \, \norm{\cM_{2}f}{\lp{\mu}}^p + \vep^{-p} \,
        \norm{f^{\sharp,b'}}{\lp{\mu}}^p . \\
\end{aligned}
$$
Now, for a given $p$, we choose $\eta'$ such that $\eta'^p = 1 - \si/4$,
and then we choose $\vep$ small enough so that $\eta \leq 1-\si/2$.
Therefore $\eta\, \eta'^{-p}<1$ and (\ref{f: sharp max}) follows.
\end{proof}

As a consequence of the relative
distributional inequality proved in Theorem \ref{t: basic}
we establish an interpolation result for
analytic families of operators.  
We need the following notation.  In the following, when
$X$ and $Y$ are Banach spaces, and $\te$ is in $(0,1)$,
we denote by $(X,Y)_{[\te]}$ the complex interpolation space
between $X$ and $Y$ with parameter $\te$.

\begin{theorem} \label{t: interpolation}
Suppose that $\te$ is in $(0,1)$.  The following hold:
\begin{enumerate}
\item[\itemno1]
if $p_\te$ is $2/(1-\te)$, then
$\bigl(\ld{\mu},BMO(\mu)\bigr)_{[\te]} = L^{p_\te}(\mu)$;
\item[\itemno2]
if $p_\te$ is $2/(2-\te)$, then
$\bigl(\hu{\mu},\ld{\mu}\bigr)_{[\te]} = L^{p_\te}(\mu)$.
\end{enumerate}
\end{theorem}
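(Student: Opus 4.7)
The plan is to establish \rmi\ first and then deduce \rmii\ from \rmi\ by duality. For the inclusion $L^{p_\te}(\mu)\subset\bigl(\ld\mu,BMO(\mu)\bigr)_{[\te]}$ in \rmi, given $f\in L^{p_\te}(\mu)$ we consider the analytic family $F(z)=\operatorname{sgn}(f)\,|f|^{p_\te(1-z)/2}$ on the closed strip $\overline S$, where $S=\{z\in\BC:0<\Re z<1\}$. This $F$ is holomorphic on $S$ and continuous on $\overline S$ with values in $\ld\mu+BMO(\mu)$, and satisfies $F(\te)=f$, $\norm{F(iy)}{2}=\norm{f}{p_\te}^{p_\te/2}$ and $|F(1+iy)|\le\One_{\{f\ne 0\}}$, so that $\norm{F(1+iy)}{BMO(\mu)}\le 2$. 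Hence $f$ belongs to the complex interpolation space with the required norm control.

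For the reverse inclusion $\bigl(\ld\mu,BMO(\mu)\bigr)_{[\te]}\subset L^{p_\te}(\mu)$ we adapt the Fefferman--Stein interpolation argument via the local sharp function. By Theorem~\ref{t: basic}, it suffices to control $\norm{F(\te)^{\sharp,b'}}{p_\te}$ in terms of the admissible norm of any holomorphic extension $F$ of $f$, which in turn, by duality, reduces to estimating $\int F(\te)^{\sharp,b'}\, g\wrt\mu$ for nonnegative $g$ in the unit ball of $L^{p_\te'}(\mu)$. The plan is to linearize the nonlinear sharp operation by measurably selecting, for each $x\in M$, a ball $B_x\in\cB_{b'}$ approximately realizing the supremum in $f^{\sharp,b'}(x)$ (using Theorem~\ref{t: dyadic cubes} to reduce to a countable family of candidates), and to form a holomorphic function of $z$ by pairing $F(z)^{\sharp,b'}$ with $g$. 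Hadamard's three-lines theorem then interpolates between the $\ld\mu$-bound on $\Re z=0$, derived from the $\ld\mu$-boundedness of the local Hardy--Littlewood maximal operator (Theorem~\ref{t: local HL}), and the pointwise estimate $h^{\sharp,b'}\le 2\norm{h}{BMO(\mu)}$ on $\Re z=1$. This yields the required bound.

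Finally, for \rmii\ we invoke Calder\'on's duality theorem for complex interpolation. Since $\ld\mu$ is reflexive, $\hu\mu\cap\ld\mu$ is dense both in $\hu\mu$ and in $\ld\mu$, and $\hu\mu^*=BMO(\mu)$ by Theorem~\ref{t: duality}, we obtain
\[
\bigl(\hu\mu,\ld\mu\bigr)_{[\te]}^{*}=\bigl(BMO(\mu),\ld\mu\bigr)_{[\te]}=\bigl(\ld\mu,BMO(\mu)\bigr)_{[1-\te]}=L^{2/\te}(\mu)=L^{(p_\te)'}(\mu),
\]
using \rmi\ in the penultimate equality. Taking duals and using the reflexivity of $L^{p_\te}(\mu)$ yields $\bigl(\hu\mu,\ld\mu\bigr)_{[\te]}=L^{p_\te}(\mu)$, as claimed. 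The principal obstacle in the whole argument is the linearization of the sharp operation needed to make the three-lines argument in \rmi\ rigorous; the dyadic cube structure of Section~\ref{s: Preliminary} is what makes this linearization tractable in our nondoubling setting.
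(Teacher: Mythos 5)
Your overall strategy mirrors the paper's: for \rmi\ the ``easy'' embedding via an explicit analytic family, then the reverse inclusion via Theorem~\ref{t: basic} and a linearization of the sharp operation (the Fefferman--Stein adaptation the paper invokes), and for \rmii\ the duality theorem for the complex method. Your computation in \rmii\ is correct; the final step really uses that $L^{p_\te'}(\mu)$ is reflexive, hence so is $\bigl(\hu\mu,\ld\mu\bigr)_{[\te]}$ (a Banach space with reflexive dual is reflexive), so it equals its bidual $L^{p_\te}(\mu)$, but this is a standard point.

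There is, however, a genuine gap in your three-lines argument for the hard inclusion of \rmi. You fix $g$ in the unit ball of $L^{p_\te'}(\mu)$ and vary only $F(z)$. With $g$ fixed, the boundary estimates you describe give a bound of the form $\norm{\cM_2 F(iy)}{2}\,\norm{g}{2}$ on $\Re z=0$ (Cauchy--Schwarz plus Theorem~\ref{t: local HL}) and $\norm{F(1+iy)}{BMO(\mu)}\,\norm{g}{1}$ on $\Re z=1$ (pointwise bound by the $BMO$ norm), so three lines produces, up to the expected powers of the admissible norm of $F$, a factor $\norm{g}{2}^{1-\te}\norm{g}{1}^{\te}$. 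Since $1/p_\te'=(1+\te)/2=(1-\te)/2+\te$, log-convexity of the $L^p$ norms gives $\norm{g}{p_\te'}\le\norm{g}{2}^{1-\te}\norm{g}{1}^{\te}$, which is the \emph{wrong} direction: your bound is not dominated by $\norm{g}{p_\te'}$, and the argument does not close. The fix, as in Fefferman--Stein, is to vary the dual side holomorphically as well: set for instance $G(z)=\operatorname{sgn}(g)\,\mod{g}^{p_\te'(1+z)/2}$, so that $G(\te)=g$, $\norm{G(iy)}{2}=\norm{g}{p_\te'}^{p_\te'/2}$ and $\norm{G(1+iy)}{1}=\norm{g}{p_\te'}^{p_\te'}$, and apply three lines to the bilinear holomorphic function obtained by pairing the linearized sharp of $F(z)$ with $G(z)$; the exponents then combine to give exactly one power of $\norm{g}{p_\te'}$, as required.
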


\begin{proof}
The proof of \rmi\ is an adaptation of the proof of 
\cite[Cor.~1, p.156]{FS}.  We omit the details.

Now \rmii\ follows from \rmi\ and the duality theorem
\cite[Corollary~4.5.2]{BL}.  
\end{proof}

\begin{theorem}
Denote by $S$ the strip $\{z\in \BC: \Re z\in (0,1)\}$.
Suppose that $\{\cT_z\}_{z\in \bar S}$ is a family of uniformly bounded
operators on $\ld{\mu}$ such that $z\mapsto \int_{\BR^d} \cT_zf \, g \wrt \mu $
is holomorphic in $S$ and continuous in $\bar S$ for all
functions $f$ and $g$ in $\ld{\mu}$.
Further, assume that there exists a constant $A$ such that
$$
\opnorm{\cT_{is}}{\ld{\mu}} \leq A
\qquad\hbox{and}\qquad
\opnorm{\cT_{1+is}}{\ly{\mu};BMO({\mu})} \leq A.
$$
Then for every $\te$ in $(0,1)$ the operator $\cT_\te$ is bounded
on $L^{p_\te}({\mu})$, where $p_\te = 2/(1-\te)$, and
$$
\opnorm{\cT_\te}{L^{p_\te}({\mu})}
\leq A_\te,
$$
where $A_\te$ depends only on $A$ and on $\te$.
\end{theorem}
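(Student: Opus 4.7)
The plan is to apply the abstract Stein interpolation theorem for analytic families of operators, using the identification of $L^{p_\te}(\mu)$ with the complex interpolation space $\bigl(\ld{\mu}, BMO(\mu)\bigr)_{[\te]}$ established in Theorem~\ref{t: interpolation}~\rmi, together with the classical identity $\bigl(\ld{\mu},\ly{\mu}\bigr)_{[\te]} = L^{p_\te}(\mu)$. By a standard density argument, it suffices to prove $\bignorm{\cT_\te f}{L^{p_\te}(\mu)} \leq C\, A$ for each simple function $f$ of bounded support with $\norm{f}{L^{p_\te}(\mu)} = 1$.

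Fix such an $f$ and introduce the Stein analytic family
$$
f_z = \mod{f}^{p_\te\, (1-z)/2} \, \mathrm{sgn}(f),
$$
so that $f_\te = f$, while on the boundary lines of the strip $S$ we have $\mod{f_{iy}} = \mod{f}^{p_\te/2}$ and $\mod{f_{1+iy}} = \One_{\{f \neq 0\}}$. In particular $\norm{f_{iy}}{\ld{\mu}} = 1$ and $\norm{f_{1+iy}}{\ly{\mu}} \leq 1$ for every $y$ in $\BR$. By the hypotheses on $\{\cT_z\}$, the family $z \mapsto \cT_z f_z$ is holomorphic in $S$ with values in $\ld{\mu}+BMO(\mu)$, continuous up to $\bar S$, and satisfies the boundary bounds
$$
\norm{\cT_{iy} f_{iy}}{\ld{\mu}} \leq A
\qquad\hbox{and}\qquad
\norm{\cT_{1+iy} f_{1+iy}}{BMO(\mu)} \leq A
\quant y \in \BR.
$$
Applying the abstract complex interpolation theorem for admissible analytic families to the compatible pairs $(\ld{\mu},\ly{\mu})$ on the domain side and $(\ld{\mu},BMO(\mu))$ on the target side, we conclude that $\cT_\te f \in \bigl(\ld{\mu},BMO(\mu)\bigr)_{[\te]}$ with norm at most $C\, A$; invoking Theorem~\ref{t: interpolation}~\rmi\ to identify this interpolation space with $L^{p_\te}(\mu)$ would complete the proof.

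The main technical subtlety lies in justifying the abstract interpolation step, which is delicate because one of the target endpoint spaces is $BMO(\mu)$ rather than an $\lp{\mu}$ space. A more self-contained route is to run the three-lines lemma directly on
$$
F(z) = \int_M \bigl(\cT_z f_z\bigr) \, g_z \wrt\mu,
$$
where $g_z$ is an auxiliary analytic family built from a simple function $g$ with $\norm{g}{L^{p_\te'}(\mu)} = 1$. On $\Re z = 0$ the estimate $\bigmod{F(iy)} \leq A$ is immediate from the $\ld{\mu}$-bound on $\cT_{iy}$ and the Cauchy--Schwarz inequality. On $\Re z = 1$, one pairs an element of $BMO(\mu)$ with $g_{1+iy}$ viewed as an element of $\hu{\mu}$; the bound $\bigmod{F(1+iy)} \leq C\, A$ then follows from the $H^1$--$BMO$ duality of Theorem~\ref{t: duality}, which in our setting plays precisely the r\^ole played by C.~Fefferman's theorem in the classical Fefferman--Stein interpolation argument. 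Once the three-lines lemma yields $\bigmod{F(\te)}\leq C\, A$, duality gives $\norm{\cT_\te f}{L^{p_\te}(\mu)} \leq C\, A$, as required.
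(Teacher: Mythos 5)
Your first paragraph is exactly the paper's proof: the authors simply cite Theorem~\ref{t: interpolation}~\rmi\ together with the abstract interpolation theorem for analytic families of Cwikel and Janson \cite[Thm~1]{CJ}, and mention the Fefferman--Stein three-lines argument as an alternative. So the main route you propose is the right one and matches the paper. One small point of confusion: the Stein family $f_z$ you introduce is not needed for the Cwikel--Janson route (which operates directly on the operator family $\{\cT_z\}$ between compatible interpolation couples); it belongs to the three-lines route you sketch afterwards.

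In that ``more self-contained'' sketch there is a genuine gap. You pair $\cT_{1+iy} f_{1+iy}$ against the dual Stein family element $g_{1+iy}$ and invoke $H^1$--$BMO$ duality, but $g_{1+iy}$ is merely a bounded, boundedly supported $L^1$ function of unit $L^1$ norm --- it has no cancellation and no control on the size of its support, so it is not an $H^1(\mu)$ element with uniformly bounded $H^1$ norm. Hence the boundary estimate $\bigmod{F(1+iy)}\lesssim A$ does not follow as stated. The classical Fefferman--Stein argument (\cite[Thm~4, p.~175]{St2}, \cite{FS}) avoids this by passing through the local sharp maximal function: one bounds $(\cT_\te f)^{\sharp,b'}$ pointwise via a three-lines estimate on quantities of the form $\smallfrac{1}{\mu(B)}\int_B \cT_z f_z\, h\wrt\mu$ with $h\in L^\infty_0(B)$, $\norm{h}{\infty}\le 1$ (so that $h/\mu(B)$ is an honest $(1,\infty)$-atom and the duality on the line $\Re z=1$ is legitimate), and then converts the pointwise sharp-function bound into an $L^{p_\te}$ bound via the good-$\la$ inequality of Theorem~\ref{t: basic}. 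This is precisely the machinery that Theorem~\ref{t: interpolation}~\rmi\ packages; if you want a self-contained proof you should route through $f^{\sharp,b'}$ rather than through an $H^1$ bound on $g_{1+iy}$.
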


\begin{proof}
This follows directly from \rmi\ of the previous theorem
and \cite[Thm~1]{CJ}.  Alternatively,
we may follow the line of the proof of the classical case
(see, for instance, \cite[Thm~4, p.175]{St2}, or \cite{FS}).
\end{proof}

\section{Singular integrals} \label{s: Singular integrals}

In this section we develop a theory of singular integral operators
acting on $\lp{\mu}$ spaces.  

Preliminarly, we observe the following.
Recently, M.~Bownik \cite{Bow}, following
up earlier work of Y.~Meyer, produced in the 
classical Euclidean setting an example of an 
operator $\cT_B$ defined on $(1,\infty)$-atoms with
$$
\sup\{ \norm{\cT_B a}{\lu{\la}}: \hbox{$a$ is a $(1,\infty)$-atom} \} < \infty,
$$
that does not extend to a bounded operator from $\hu{\la}$
to $\lu{\la}$: here $\la$ denotes the Lebesgue measure on $\BR^n$.
The problem of giving
sufficient conditions for an operator uniformly bounded on 
atoms to extend to a bounded operator from $\hu{\la}$ to 
$\lu{\la}$ has been considered independently in \cite{MSV,YZ}.
The paper \cite{YZ} and most of \cite{MSV} focuse
on the Euclidean case.  However, in the last part of \cite{MSV}
more general settings are considered.
In particular, suppose that $(M,\rho,\mu)$ is a $\si$-finite
metric measure space with properties (LDP), (I) and (AMP).
Then the following holds.

\begin{proposition}  \label{p: basic prop}
Suppose that $q$ is in $(1,\infty)$, and that
$\cT$ is a linear operator defined on finite linear
combinations of $(1,q)$-atoms, satisfying
$$
\sup\{ \norm{\cT a}{\lu{\mu}}: \hbox{\emph{$a$ is a $(1,q)$-atom}} \} < \infty.
$$
The following hold:
\begin{enumerate}
\item[\itemno1]
$\cT$ extends to a bounded operator $\wt \cT$ from $\hu{\mu}$
to $\lu{\mu}$ and the transpose operator $\cT^*$ extends
to a bounded operator $(\cT^*)\wt{\phantom a}$ from 
$\ly{\mu}$ to $BMO(\mu)$;
\item[\itemno2]
if $\cT$ is bounded on $\ld{\mu}$, then $\cT$ and $\wt \cT$ are consistent
operators on $\hu{\mu}\cap \ld{\mu}$.
\end{enumerate}
\end{proposition}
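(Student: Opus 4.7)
The plan for \rmi\ is to view the hypothesis as boundedness of $\cT$ from the subspace of finite $(1,q)$-atomic combinations (equipped with the ``finite atomic norm'') into $\lu{\mu}$, and then to extend by density after establishing equivalence of this finite atomic norm with the full $\hu{\mu}$-norm. Denote by $H_{\mathrm{fin}}^{1,q}(\mu)$ the space of finite linear combinations of $(1,q)$-atoms, endowed with
$$
\norm{g}{H_{\mathrm{fin}}^{1,q}(\mu)}
=\inf\Bigl\{\sum_{k=1}^N\mod{\la_k}: g=\sum_{k=1}^N\la_k\, a_k,\ N\in\BN,\ \hbox{each $a_k$ a $(1,q)$-atom}\Bigr\}.
$$
With $M:=\sup\{\norm{\cT a}{\lu{\mu}}: a\ \hbox{a $(1,q)$-atom}\}$, the linearity of $\cT$ and the triangle inequality immediately give $\norm{\cT g}{\lu{\mu}}\le M\,\norm{g}{H_{\mathrm{fin}}^{1,q}(\mu)}$ for every $g$ in $H_{\mathrm{fin}}^{1,q}(\mu)$.

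The decisive ingredient is the norm equivalence
$$
\norm{g}{H_{\mathrm{fin}}^{1,q}(\mu)}\le C\, \norm{g}{\hu{\mu}}
\quant g\in H_{\mathrm{fin}}^{1,q}(\mu),
$$
which is the content of the last part of \cite{MSV} in the present locally doubling setting with (I) and (AMP). I would reprove it by starting from an $\hu{\mu}$-convergent decomposition $g=\sum_k\la_k\, a_k$ with $\sum_k\mod{\la_k}\le 2\,\norm{g}{\hu{\mu}}$, truncating after $N$ terms, and absorbing the tail: since $g$ has bounded support and lies in $L^q(\mu)$ (being a finite combination of $(1,q)$-atoms), and since the truncation error tends to $0$ in $\hu{\mu}$ and hence in $\lu{\mu}$, the remainder $g-\sum_{k=1}^N\la_k\, a_k$ can be recast, via the local doubling estimates of Remark~\ref{r: geom I} and the dyadic cubes of Theorem~\ref{t: dyadic cubes}, as a uniformly bounded coefficient multiple of a single $(1,q)$-atom; the geometric hypotheses enter precisely to guarantee uniform control on the enlarged balls used in this absorption.

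Granted the norm equivalence and the evident density of $H_{\mathrm{fin}}^{1,q}(\mu)$ in $\hu{\mu}$, $\cT$ extends uniquely to a bounded $\wt\cT\colon\hu{\mu}\to\lu{\mu}$. For the transpose assertion, Theorem~\ref{t: duality} yields $\hu{\mu}^*\simeq BMO(\mu)$; hence the Banach adjoint $(\wt\cT)^*\colon\ly{\mu}\to BMO(\mu)$ is bounded, and testing the natural bilinear pairing against finite atomic combinations shows that $(\wt\cT)^*$ coincides with $\cT^*$ on the subspace where the latter is defined, completing \rmi.

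For \rmii, fix $f\in\hu{\mu}\cap\ld{\mu}$; the task reduces to producing an atomic decomposition $f=\sum_k\la_k\, a_k$ whose partial sums $S_N$ converge to $f$ simultaneously in $\hu{\mu}$ and in $\ld{\mu}$. I would obtain such a joint-convergent decomposition by a Calder\'on--Zygmund decomposition of $f$ at heights $2^k$ implemented through the dyadic cubes of Theorem~\ref{t: dyadic cubes} and the local dyadic maximal operator of Theorem~\ref{t: local HL}, with property (AMP) confining the resulting atoms to $\cB_{b_0}$. Then the $\lu{\mu}$-boundedness of $\wt\cT$ and the $\ld{\mu}$-boundedness of $\cT$ give $\wt\cT S_N\to\wt\cT f$ in $\lu{\mu}$ and $\cT S_N\to\cT f$ in $\ld{\mu}$; since $\cT S_N=\wt\cT S_N$ for every $N$, extracting a pointwise a.e.\ convergent subsequence forces $\wt\cT f=\cT f$ almost everywhere. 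The principal difficulty throughout is the norm-equivalence step supporting \rmi: this is precisely the ingredient that fails for $q=\infty$ (the Bownik counterexample) and whose verification draws on all three geometric hypotheses (LDP), (I), and (AMP).
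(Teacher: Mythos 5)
Your overall plan---establish the norm equivalence $\norm{g}{H^{1,q}_{\mathrm{fin}}(\mu)}\approx\norm{g}{\hu{\mu}}$ on finite atomic combinations, extend by density, and dualize via Theorem~\ref{t: duality} to get the $\ly{\mu}\to BMO(\mu)$ bound---is exactly what the paper invokes: its proof is a one-line citation of \cite[Thm~4.1 and Prop.~4.2]{MSV} with the remark that the argument transfers verbatim. Your treatment of \rmii\ is likewise in the spirit of \cite{MSV}.

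However, your sketch of the decisive norm-equivalence step has a genuine gap. You take an arbitrary near-optimal $\hu{\mu}$-convergent decomposition $g=\sum_k\la_k a_k$, truncate, and assert that the tail $g-\sum_{k\le N}\la_k a_k$ may be rewritten as a uniformly bounded coefficient times a single $(1,q)$-atom. This cannot work: $\hu{\mu}$-convergence yields $\norm{g-S_N}{\lu{\mu}}\to 0$ but gives no control whatsoever on $\norm{g-S_N}{\laq{\mu}}$ (the balls $B_k$ supporting the $a_k$ may have arbitrarily small measure, so the $\laq{\mu}$-norms $\mod{\la_k}\mu(B_k)^{1/q-1}$ of the individual summands---and hence of the tail---may be arbitrarily large) and no control on $\supp(g-S_N)$ (the $B_k$ need not accumulate near $\supp g$). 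These are precisely the two quantities one must control to recognise the tail as a small multiple of a single $(1,q)$-atom, and the geometric hypotheses (LDP), (I), (AMP) do nothing to supply that control from an arbitrary decomposition. The correct route---and, tellingly, the one you yourself propose for \rmii---is not to truncate an arbitrary decomposition but to construct a Calder\'on--Zygmund-type decomposition of $g$ itself, through the dyadic cubes of Theorem~\ref{t: dyadic cubes} and the local maximal operator of Theorem~\ref{t: local HL}; this localises the atoms near an enlargement of $\supp g$ and, crucially, because $g\in\laq{\mu}$ with bounded support and $q<\infty$, converges in $\laq{\mu}$ as well as in $\hu{\mu}$. It is the tail of \emph{that} decomposition which can be absorbed into a small multiple of a single atom, and this is also exactly where $q<\infty$ enters: the failure at $q=\infty$ (Bownik) is the failure of $\ly{\mu}$-convergence of the constructed decomposition, not of the truncation step as you wrote it.
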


\begin{proof}
The result \cite[Thm~4.1 and Prop.~4.2]{MSV}
is stated for spaces of homogeneous type.  
However, the proof extends \emph{verbatim} to our setting.
\end{proof}

Now we assume that $\cT$ is bounded on $\ld{\mu}$ and
that there exists a locally integrable function $k$
off the diagonal in $M\times M$ such that for every 
function $f$ with support of finite measure
$$
\cT f(x) 
= \int_M k(x,y) \, f(y) \wrt \mu(y) 
\quant x \notin \supp f.
$$
We refer to $k$ as to the {\it kernel of} $\cT$.
A straightforward computation shows that the kernel $k^*$
of the (Hilbert space) adjoint $\cT^*$ of $\cT$ is related to
the kernel $k$ of $\cT$ by the formula
\begin{equation} \label{f: kernel adjoint}
k^*(y,x) = \OV{ k(x,y)}.
\end{equation}
In particular, if $\cT$ is self adjoint on $\ld{\mu}$, then
\begin{equation} \label{f: kernel self adjoint}
k(y,x) = \OV{k(x,y)}.
\end{equation}
The next theorem is a version in our case of a classical result
which holds on spaces of homogeneous type. 
\emph{Mutatis mutandis},
its proof is similar to the proof in the classical case.
However, we include a sketch of the proof for the reader's convenience.
See also \cite{MM} for a detailed proof of the
analogous result in the Gaussian case. 

\begin{theorem} \label{t: singular integrals}
Suppose that $b$ is in $\BR^+$ and $b>R_0/(1-\be)$, where
$R_0$ and $\be$ appear in the definition of property (AMP).
Suppose that $\cT$ is a bounded
operator on $\ld{\mu}$ and that its kernel~$k$
is locally integrable off the diagonal of $M\times M$.  
Define $\upsilon_{k}$ and $\nu_{k}$ by
$$
\upsilon_{k}
= \sup_{B\in \cB_{b}} \sup_{x,x'\in B}
\int_{(2B)^c} \mod{k(x,y) - k(x',y)} \wrt \mu (y),
$$
and
$$
\nu_{k}
= \sup_{B\in \cB_{b}} \sup_{y,y'\in B}
\int_{(2B)^c} \mod{k(x,y) - k(x,y')} \wrt {\mu }(x).
$$
The following hold:
\begin{enumerate}
\item[\itemno1]
if $\nu_{k}$ is finite, then $\cT$ extends to a bounded
operator on $\lp{\mu}$ for all $p$ in $(1,2]$ and
from $H^1({\mu})$ to $\lu{\mu}$.
Furthermore, there exists a constant $C$ such that
$$
\opnorm{\cT}{H^1({\mu});\lu{\mu}}
\leq C \, \bigl(\nu_{k} + \opnorm{\cT}{\ld{\mu}}\bigr);
$$
\item[\itemno2]
if $\upsilon_{k}$ is finite, then $\cT$ extends to a bounded
operator on $\lp{\mu}$ for all $p$ in $[2,\infty)$
and from $\ly{\mu}$ to $BMO({\mu})$.
Furthermore, there exists a constant $C$ such that
$$
\opnorm{\cT}{\ly{\mu};BMO({\mu})}
\leq C \, \bigl(\upsilon_{k} + \opnorm{\cT}{\ld{\mu}}\bigr);
$$
\item[\itemno3]
if $\cT$ is self adjoint on $\ld{\mu}$ and $\nu_{k}$ is
finite, then $\cT$ extends to a bounded operator
on $\lp{\mu}$ for all $p$ in $(1,\infty)$, from $H^1({\mu})$ to $\lu{\mu}$
and from $\ly{\mu}$ to $BMO({\mu})$.
\end{enumerate}
\end{theorem}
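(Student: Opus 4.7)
The plan is to prove \rmi\ directly from an atomic estimate and the extension Proposition \ref{p: basic prop}\rmi, then to derive \rmii\ by duality, and finally to note that \rmiii\ follows immediately from \rmi\ and \rmii\ once self-adjointness is used to identify $\nu_k$ with $\upsilon_k$.

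For \rmi, I will verify that $\cT$ is uniformly bounded on $(1,2)$-atoms in $\lu{\mu}$-norm. Let $a$ be a $(1,2)$-atom supported in $B\in\cB_b$. I split $\norm{\cT a}{\lu{\mu}} = \int_{2B}\mod{\cT a}\wrt\mu + \int_{(2B)^c}\mod{\cT a}\wrt\mu$. For the near part, Cauchy--Schwarz, the $\ld{\mu}$-boundedness of $\cT$, the atomic normalisation $\norm{a}{\ld{\mu}}\le \mu(B)^{-1/2}$, and the local doubling bound $\mu(2B)\le D_{2,b}\,\mu(B)$ together give $\int_{2B}\mod{\cT a}\wrt\mu \le D_{2,b}^{1/2}\,\opnorm{\cT}{\ld{\mu}}$. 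For the far part, the cancellation $\int_B a\wrt\mu = 0$ lets me write $\cT a(x) = \int_B[k(x,y)-k(x,c_B)]\,a(y)\wrt\mu(y)$ for $x\in(2B)^c$; Fubini together with the definition of $\nu_k$ (with $y$, $c_B$ both in $B$) yields $\int_{(2B)^c}\mod{\cT a}\wrt\mu \le \nu_k\,\norm{a}{\lu{\mu}} \le \nu_k$, where the last inequality uses Hölder and the atomic normalisation. Summing gives the uniform bound $\sup_a\norm{\cT a}{\lu{\mu}} \le C\bigl(\opnorm{\cT}{\ld{\mu}}+\nu_k\bigr)$. Proposition \ref{p: basic prop}\rmi\ then produces the bounded extension $\wt\cT\colon \hu{\mu}\to\lu{\mu}$ with the stated norm estimate. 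The $\lp{\mu}$-boundedness for $p\in(1,2)$ follows by complex interpolation between the $\hu{\mu}\to\lu{\mu}$ and $\ld{\mu}\to\ld{\mu}$ endpoints, using Theorem \ref{t: interpolation}\rmii: for $\te\in(0,1)$ the identity $\bigl(\hu{\mu},\ld{\mu}\bigr)_{[\te]} = L^{p_\te}(\mu)$ with $p_\te=2/(2-\te)$ interpolates to $\cT\colon L^{p_\te}(\mu)\to L^{p_\te}(\mu)$, since $\bigl(\lu{\mu},\ld{\mu}\bigr)_{[\te]} = L^{p_\te}(\mu)$ as well.

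For \rmii, I pass to the $\ld{\mu}$-adjoint $\cT^*$. Formula \pref{f: kernel adjoint} gives $\nu_{k^*} = \upsilon_k < \infty$, so \rmi\ applies to $\cT^*$, yielding an atomic $\lu{\mu}$-bound for $\cT^*$ and, via Theorem \ref{t: interpolation}\rmii, the boundedness of $\cT^*$ on $\lp{\mu}$ for $p\in(1,2]$. Dualising gives the $\lp{\mu}$-boundedness of $\cT$ for $p\in[2,\infty)$ with the claimed norm bound in terms of $\upsilon_k$ and $\opnorm{\cT}{\ld{\mu}}$. For the endpoint, the uniform atomic bound on $\cT^*$ together with the second conclusion of Proposition \ref{p: basic prop}\rmi\ (the transpose statement) produces a bounded extension of $\cT$ from $\ly{\mu}$ into $BMO(\mu)$ with norm controlled by $\upsilon_k+\opnorm{\cT}{\ld{\mu}}$.

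For \rmiii, self-adjointness of $\cT$ and the conjugation relation \pref{f: kernel self adjoint} force $\nu_k=\upsilon_k$. Consequently the hypothesis of both \rmi\ and \rmii\ holds simultaneously, and combining the two conclusions yields $\lp{\mu}$-boundedness on the full range $p\in(1,\infty)$ together with the two endpoint estimates. The only genuinely delicate step in the whole argument is the passage from a uniform atomic $\lu{\mu}$-bound to an $\hu{\mu}\to\lu{\mu}$ extension: Bownik's counterexample shows that this is not automatic, but it is handled once and for all by Proposition \ref{p: basic prop}, whose hypotheses we verify explicitly via the near/far atomic decomposition above.
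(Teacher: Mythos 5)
Your proposal is correct and follows exactly the route taken in the paper: uniform atomic $\lu{\mu}$-estimate split into near/far parts (the detail that the paper elides with ``as in the classical case''), passage to a bounded $\hu{\mu}\to\lu{\mu}$ extension via Proposition~\ref{p: basic prop}, complex interpolation with Theorem~\ref{t: interpolation}, duality through $\cT^*$ and \pref{f: kernel adjoint} for \rmii, and \pref{f: kernel self adjoint} to identify $\nu_k$ with $\upsilon_k$ for \rmiii. No gaps.
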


\begin{proof}
First we prove \rmi.
In view of Proposition~\ref{p: basic prop} it suffices to show that
$\cT$ maps $(1,2)$-atoms in $H_1^1(\mu)$ uniformly into $\lu{\mu}$.
This is done exactly as in the classical case, except that we need to 
consider only atoms supported in balls of $\cB_b$.  
Then $\cT$ maps $\hu{\mu}$ into $\lu{\mu}$, and, by interpolation,
on $\lp{\mu}$ for all $p$ in $(1,2)$. 

Next we prove \rmii. 
Since the kernel $k^*(x,y)$ of the (Hilbert space) adjoint 
$\cT^*$ of $\cT$ is $\OV{{k(y,x)}}$, by \rmi\
$\upsilon_{k^*}=\nu_k$ is finite and the operator 
$\cT^*$ is bounded from $\hu{\mu}$ to $\lu{\mu}$.
Hence $\cT$ is bounded from $L^\infty({\mu})$ to $BMO({\mu})$.  
Moreover  
$$
\opnorm{\cT^*}{\ly{\mu};BMO({\mu})}
\leq C \, \bigl(\nu_{k} + \opnorm{\cT}{\ld{\mu}}\bigr).
$$
By interpolation 
$\cT$ extends to a bounded operator
on $\lp{\mu}$ for all $p$ in $(2,\infty)$,

Finally, we prove \rmiii.  By \rmii, $\cT$ extends to
a bounded operator on $\lp{\mu}$ for all $p$ in $(1,2)$
and from $H^1({\mu})$ to $\lu{\mu}$.
By (\ref{f: kernel self adjoint}) also $\upsilon_{k}$
is finite.  Hence, by \rmi, $\cT$ extends to a bounded
operator on $\lp{\mu}$ for all $p$ in $[2,\infty)$
and from $\ly{\mu}$ to $BMO({\mu})$, thereby
concluding the proof of \rmiii\ and of the theorem.
\end{proof}

\begin{remark} \label{r: singular integrals}
It is worth observing that in the case where $M$
is a Riemannian manifold and the kernel $k$ is ``regular'',
then the condition $\upsilon_{k} <\infty$ of
Theorem~\ref{t: singular integrals}~\rmi\ may be replaced by the condition
$\upsilon_{k}'<\infty$, where
\begin{equation} \label{f: singular integrals}
\upsilon'_{k}
= \sup_{B\in \cB_{b}} \, r_B \sup_{x\in B}
\int_{(2B)^c} \mod{\grad_x k(x,y)} \wrt \mu (y).
\end{equation}
Similarly, the condition $\nu_{k} <\infty$ of
Theorem~\ref{t: singular integrals}~\rmii\ may be replaced by
the condition $\nu_{k}'<\infty$, where
\begin{equation} \label{f: singular integrals II}
\nu'_{m}
= \sup_{B\in \cB_{b}} \, r_B \sup_{y\in B}
\int_{(2B)^c} \mod{\grad_y k(x,y)} \wrt {\mu }(x).
\end{equation}

Indeed, by the mean value theorem we see that the condition
$$
\sup_{B\in \cB_{1}} \sup_{x,x'\in B} \mod{x-x'} \,
\int_{(2B)^c}  \mod{\grad_x k(x,y)} \wrt \mu (y) <\infty
$$
implies the condition $\upsilon_{k} <\infty$ of the theorem.
Since $\mod{x-x'} < 2\,r_B$, (\ref{f: singular integrals}) follows.

We note also that formula (\ref{f: kernel self adjoint})
imply that if $\cT$ is self adjoint, then
$\upsilon'_{k}<\infty$ holds if and only if $\nu'_{k}<\infty$ does.
\end{remark}

\section{Cheeger's isoperimetric constant and property \PP}
	\label{s: Cheeger}

In this section we show that the theory developed in the
previous sections may be applied to an interesting class
of complete noncompact Riemannian manifolds.
First we recall that if 
$(M,\rho)$ is a complete Riemannian manifold with Ricci
curvature bounded from below, then
$M$ is a locally doubling metric space with
respect to the Riemannian measure and the geodesic distance.
The proof of this fact is a direct consequence of 
M.~Gromov's variant \cite{Gr} of R.L.~Bishop's comparison theorem
(see, for instance, \cite{BC}).

It is natural to investigate the dependence of
property \PP\ on other geometric or analytic properties of  $M$.
Denote by $b(M)$ the bottom of the spectrum of $M$, defined by
$$
b(M)
= \inf_{f\neq 0}
\, \frac{\int_M\mod{\grad f}^2\wrt V}{\int_M f^2 \wrt V},
$$
where $V$ denotes the Riemannian measure of $M$
and $f$ runs over all sufficiently smooth functions with compact support. 
We denote by $h(M)$ the Cheeger isoperimetric constant of $M$ defined by
\begin{equation} \label{f: Cheeger}
h(M)=\inf \frac{\si(\partial A)}{V(A)},
\end{equation}
where the infimum runs over all bounded open subsets $A$ 
with smooth boundary $\partial A$ and where $\sigma$ 
denotes the $(d-1)$-dimensional measure. 
Cheeger proved that 
\begin{equation} \label{f: CI}
b(M) \geq h(M)^2/4.
\end{equation}

In this section we shall relate $b(M)$ and $h(M)$ to 
the isoperimetric constant $I_M$ defined in Section~\ref{s: Geom prop}.

First we need to show that in Cheeger's isoperimetric inequality 
$\si(\partial A) \geq h(M) \, V(A)$,
we may replace  
the bounded open set $A$ with smooth boundary by any set $E$ 
of finite measure and the $(d-1)$-dimensional measure 
$\sigma$ of the boundary by 
the perimeter of $E$. The definition of perimeter of a 
set in a Riemannian manifold mimics closely the definition in the Euclidean 
setting \cite{EG,MPPP}. 

If $U$ is an open subset of $M$ 
we shall denote by $\Lambda^k_c(U)$ the 
space of smooth $k$-forms  with compact support contained in 
$U$. 
The divergence is the formal adjoint of the the exterior 
derivative $\wrt$, i.e. the operator $\Div$
mapping $k+1$-forms to $k$-forms defined by
\begin{equation}\label{div}
\int_M \langle \Div \om,\eta\rangle_x \wrt V(x)
=-\int_M \langle \om,\wrt\eta\rangle_x \wrt V(x)
\end{equation}
for all smooth $k+1$-forms $\om$ and all 
smooth $k$-forms $\eta$ with compact support.

Given a real valued function $f$ in $L^1(M)$, the variation of $f$ in $U$ is
$$
\Var(f,U)
= \sup\set{\int_M f\, (\Div \om) \wrt V: 
\om\in\Lambda^1_c(U), \norm{\om}{\infty}\le 1}.
$$
We say that  $f$ has bounded variation in $U$ 
if $\Var(f,U)<\infty$. 
We shall denote by $BV(U)$ the space of function of bounded variation in $U$. 

As in the Euclidean case, if $f$ is in $BV(M)$, then the map 
$U\mapsto \Var(f,U)$ extends to a finite Borel measure on $M$. 

A measurable set $E\subset M$ has finite perimeter 
if its indicator function $\1_E$ is in $BV(M)$. 
If $U$ is a Borel set the perimeter of $E$ in $U$ is 
$$
P(E,U)=\Var(\1_E,U).
$$
Since the manifold $M$ is complete, by \cite[Theorem 2.7]{Hebey}  
the Sobolev space $H^{1,1}(M)$ is the completion of the space 
$C^\infty_c(M)$ of smooth functions 
with compact support on $M$ with respect to the norm 
$$
\norm{f}{H^{1,1}}
=\norm{f}{1}+\int_M \mod{\grad f} \wrt V.
$$
It is an easy matter to show that $H^{1,1}(M)\subset BV(M)$ and 
$$
\Var(f,M)
= \int_M \mod{\grad f} \wrt V\quant f\in H^{1,1}(M).
$$
Note also that the space $\Lip_c(M)$ of Lipschitz functions 
with compact support on $M$ is contained in $H^{1,1}(M)$, 
by \cite[Lemma 2.5]{Hebey}.

In the Euclidean setting it is well known that $BV$ 
functions may be approximated in variation by smooth functions in $L^1$ 
(see, for instance \cite[Theor. 3.9]{EG}). 
The same result holds in the Riemannian setting \cite[Prop. 1.4]{MPPP}.

\begin{lemma}\label{l: approximation}
For every $f$ in $BV(M)$ there 
exists a sequence $(f_n)$ in $C^\infty_c(M)$ 
which converges to $f$ in $L^1(M)$ and such that 
$$
\Var(f,M)=\lim_{n\to\infty} \Var(f_n,M).
$$
\end{lemma}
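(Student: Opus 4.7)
The strategy is to adapt the classical Meyers--Serrin--Anzellotti--Giaquinta smoothing scheme (in the spirit of \cite{EG}) to the Riemannian setting. The lower semicontinuity $\Var(f,M)\le \liminf_n\Var(f_n,M)$ is immediate from the definition: for any $\om\in\Lambda^1_c(M)$ with $\norm{\om}{\infty}\le 1$, the form $\Div\om$ is bounded and compactly supported, so $\int_M f\,(\Div\om)\wrt V=\lim_n\int_M f_n\,(\Div\om)\wrt V \le \liminf_n\Var(f_n,M)$. It therefore suffices to construct $f_n\in C^\infty_c(M)$ with $f_n\to f$ in $L^1(M)$ and $\limsup_n\Var(f_n,M)\le\Var(f,M)$.

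First I would reduce to compactly supported $f$. Fix a reference point $o\in M$ and smooth cutoffs $\chi_R\in C^\infty_c(M)$ with $\chi_R=1$ on $B(o,R)$, $\supp\chi_R\subset B(o,2R)$, and $\norm{\grad\chi_R}{\infty}\le 2/R$. Then $f\chi_R\to f$ in $L^1$, and the distributional Leibniz rule in $BV$ gives
\begin{equation*}
\Var(f\chi_R,M)\le\Var(f,B(o,2R))+\frac{2}{R}\int_{B(o,2R)\setminus B(o,R)}|f|\wrt V,
\end{equation*}
which tends to $\Var(f,M)$ as $R\to\infty$ since $f\in L^1\cap BV$. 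Hence we may assume $\supp f$ is compact.

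For compactly supported $f$, fix $\epsilon>0$ and cover $\supp f$ by finitely many relatively compact charts $(U_i,\varphi_i)_{i=1}^N$ small enough that the metric tensor $g$ satisfies $(1-\epsilon)g_{\mathrm{eucl}}\le g\le (1+\epsilon)g_{\mathrm{eucl}}$ on each $U_i$. Let $\{\psi_i\}$ be a smooth partition of unity subordinate to $\{U_i\}$ with $\sum_i\psi_i\equiv 1$ on a neighbourhood of $\supp f$. For each $i$, transfer $\psi_i f$ to $\BR^d$ via $\varphi_i$, convolve with a standard Euclidean mollifier at scale $\eta>0$, then transfer back to $M$; call the result $f_i^\eta$. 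Set $f^\eta=\sum_i f_i^\eta\in C^\infty_c(M)$. The Leibniz rule $D(\psi_i f)=\psi_i\,Df+f\,\grad\psi_i\,\wrt V$ and the cancellation $\sum_i\grad\psi_i\equiv 0$ near $\supp f$ give
\begin{equation*}
Df^\eta \,=\, \sum_i\bigl(\psi_i\,Df\bigr)^\eta \,+\, \sum_i\Bigl[\bigl(f\,\grad\psi_i\bigr)^\eta - f\,\grad\psi_i\Bigr],
\end{equation*}
where the superscript denotes the local mollification. The second sum vanishes in $L^1$ as $\eta\to 0$ by continuity of convolution on $L^1$, while the first has total variation at most $(1+C\epsilon)\Var(f,M)$ because convolution of a $\BR^d$-valued measure with a probability kernel does not increase its total variation and the Riemannian/Euclidean comparison costs at most a factor $(1+C\epsilon)$. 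Sending $\eta\to 0$ and then $\epsilon\to 0$ along a diagonal extraction yields $\limsup_n\Var(f_n,M)\le\Var(f,M)$, which combined with lower semicontinuity concludes the proof.

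The main obstacle is rigorously transferring the intrinsic Riemannian variation across the chart maps: one must verify that the Euclidean total variation of $\varphi_{i*}(\psi_i f)$ controls $\Var(\psi_i f,U_i)$ up to a factor $(1+C\epsilon)$. This reduces to expressing $\Div$ and the Riemannian volume form in local coordinates and invoking the metric comparison on $U_i$, after which the error is absorbed as the cover is refined.
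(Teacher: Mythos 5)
The paper does not actually prove this lemma: it is quoted verbatim from \cite[Prop.~1.4]{MPPP}, and the sentence immediately preceding the statement attributes the result to that reference (together with the Euclidean model \cite[Thm~3.9]{EG}). So there is no ``paper's own proof'' against which to compare. What you have written is a reasonable direct proof along the standard Meyers--Serrin / Anzellotti--Giaquinta lines, and the three ingredients --- lower semicontinuity of $\Var$ from the supremum definition, reduction to compact support via Lipschitz cutoffs and the $BV$ Leibniz rule, and a partition-of-unity mollification carried out in charts where the metric is $\epsilon$-close to Euclidean --- are the right ones. The only point requiring real care, which you correctly identify as the crux, is the comparison between $\Var(\psi_i f, U_i)$ and the Euclidean total variation of $\varphi_{i*}(\psi_i f)$. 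Working through the coordinate expression for $\Div$ and re-parametrising the admissible covectors (the constraint $g^{jk}\om_j\om_k\le1$ becomes, after setting $v^j = g^{jk}\om_k\sqrt{\det g}$, a Euclidean bound $|v|_{\mathrm{eucl}}\le 1+C\epsilon$) shows that the two suprema agree up to a factor $1\pm C\epsilon$, so the error is indeed absorbed as the cover is refined. One small notational caveat: the displayed identity for $Df^\eta$ mixes objects living in different charts; it should really be stated for the Euclidean distributional derivatives in each chart separately and only afterwards compared to the intrinsic variation measure. As written the middle display conflates the intrinsic measure $Df$ with its chartwise Euclidean incarnations, which is exactly the obstacle you flag. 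But the logic closes, and the three-parameter diagonal (cutoff radius $R$, chart fineness $\epsilon$, mollification scale $\eta$) is correctly invoked.
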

Now we can prove that Cheeger's isoperimetric inequality for 
smooth compact hypersurfaces implies an analogous inequality for 
the perimeters of sets of finite measure.

\begin{proposition}\label{p: cheeger for bd}
Suppose that $M$ is a complete Riemannian manifold. If $h(M)>0$ 
then for every measurable set $E$ 
$$
P(E,M)\ge h(M)\,  V(E).
$$
\end{proposition}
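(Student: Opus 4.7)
The plan is to derive the inequality by first establishing
$$
\int_M \mod{\grad f}\wrt V \;\geq\; h(M)\,\int_M f \wrt V
$$
for every Lipschitz function $f$ with compact support taking values in $[0,1]$, and then passing to $f=\1_E$ via the approximation supplied by Lemma~\ref{l: approximation}. The bridge between Cheeger's isoperimetric inequality (stated for smooth compact hypersurfaces bounding open sets) and the variation of a function is the coarea formula
$$
\int_M \mod{\grad f}\wrt V \;=\; \int_0^1 \sigma\bigl(\{f=t\}\bigr)\wrt t,
$$
valid for Lipschitz $f$ with values in $[0,1]$.

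First I would prove the displayed inequality for smooth $f \in C^\infty_c(M)$ with $0\le f\le 1$. By Sard's theorem applied to $f$, for almost every $t \in (0,1)$ the superlevel set $\{f>t\}$ is a bounded open subset of $M$ whose boundary $\{f=t\}$ is a smooth compact hypersurface; the definition of $h(M)$ in~(\ref{f: Cheeger}) then yields $\sigma(\{f=t\}) \geq h(M)\, V(\{f>t\})$. Integrating in $t$ and invoking the layer-cake identity $\int_M f\wrt V = \int_0^1 V(\{f>t\})\wrt t$ produces the inequality for smooth $f$.

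Next, for the passage to $\1_E$ I would assume $V(E) <\infty$ and $P(E,M)<\infty$, the remaining cases being trivial. Lemma~\ref{l: approximation} furnishes $f_n \in C^\infty_c(M)$ with $f_n \to \1_E$ in $L^1(M)$ and $\int_M \mod{\grad f_n}\wrt V \to P(E,M)$. Set $\tilde f_n = \min(\max(f_n,0),1)$. These are Lipschitz, compactly supported, take values in $[0,1]$, converge to $\1_E$ in $L^1(M)$, and satisfy $\mod{\grad \tilde f_n}\leq \mod{\grad f_n}$ almost everywhere. Moreover, for $0<t<1$ we have $\{\tilde f_n>t\}=\{f_n>t\}$, so the smooth-case argument applies verbatim to $\tilde f_n$ via the Lipschitz coarea formula. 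Thus
$$
\int_M \mod{\grad f_n}\wrt V \;\geq\; \int_M \mod{\grad \tilde f_n}\wrt V \;\geq\; h(M)\int_M \tilde f_n\wrt V,
$$
and letting $n\to\infty$ gives $P(E,M)\geq h(M)\,V(E)$.

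The principal obstacle is matching Cheeger's hypothesis (bounded open sets with \emph{smooth} boundary) with the superlevel sets of the approximating smooth functions; this is resolved by Sard's theorem, together with the fact that compact support of $f_n$ forces $\{f_n>t\}$ to be bounded for each $t>0$. A secondary technicality is the need to truncate the approximating sequence into $[0,1]$ so that the layer-cake identity closes up — the truncation destroys smoothness, but, crucially, it leaves all superlevel sets at heights $t\in(0,1)$ unchanged, so Sard and Cheeger still apply to the underlying smooth $f_n$.
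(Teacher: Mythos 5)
Your proof is correct and follows the same backbone as the paper's: pass from Cheeger's geometric inequality to a functional (Sobolev-type) inequality for compactly supported functions, then transfer it to $\1_E$ via the approximation Lemma~\ref{l: approximation}. The paper's proof is shorter only because it cites Chavel for the equivalence of Cheeger's inequality with the $L^1$-Sobolev inequality $\int_M \mod{\grad f} \wrt V \ge h(M) \int_M \mod{f} \wrt V$ on $C^1_c(M)$ and then applies Lemma~\ref{l: approximation} directly; you instead derive the (one-sided) implication you actually need by Sard's theorem, the coarea formula, and the layer-cake identity, which makes the argument self-contained. Your truncation $\tilde f_n = \min(\max(f_n,0),1)$ is not strictly necessary --- the paper works with $\mod{f_n}$ in the Sobolev inequality and $L^1$-convergence of $f_n$ to $\1_E$ already gives $\int_M \mod{f_n}\wrt V \to V(E)$ --- but it is harmless and correctly justified: the map $s\mapsto \min(\max(s,0),1)$ is $1$-Lipschitz and fixes $\{0,1\}$, so $\mod{\tilde f_n-\1_E}\le \mod{f_n-\1_E}$ and hence $\int_M \tilde f_n\wrt V\to V(E)$, while $\mod{\grad\tilde f_n}\le\mod{\grad f_n}$ a.e.\ preserves the variation bound, and the key observation that the superlevel sets at heights $t\in(0,1)$ of $\tilde f_n$ and of the smooth $f_n$ coincide ensures Sard and Cheeger still apply. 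You also correctly dispose of the degenerate cases: since $BV(M)\subset L^1(M)$ by the paper's definitions, $V(E)=\infty$ forces $P(E,M)=\infty$ by convention, and the inequality is vacuous when $P(E,M)=\infty$. The net effect is an argument that is slightly longer than the paper's but removes its one external reference.
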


\begin{proof}
It is well known that Cheeger's isoperimetric 
inequality for smooth submanifolds is equivalent to the Sobolev inequality
\begin{equation}\label{Sob}
\Var(f,M)=\int_M \mod{\grad f}\wrt V\ge h(M) \int_M\mod{f}\wrt V
\end{equation}
for all real valued functions in $C^1_c(M)$ \cite{Chavel}.  
Suppose that $E$ is a measurable set of finite perimeter. 
By Lemma~\ref{l: approximation}  there exists a sequence $(f_n)$ 
of functions in $C^1_c(M)$ such that $f_n\to \1_E$ in 
$L^1(M)$ and $\Var(f_n,M)\to\Var(\1_E,M)=P(E,M)$. 
Hence the desired conclusion follows from (\ref{Sob}).
\end{proof}

The following lemma is the coarea formula for  functions in $H^{1,1}(M)$.
The proof uses the density of $C^1_c(M)$ in $H^{1,1}(M)$ 
and mimics closely the argument in the Euclidean setting \cite{EG}.

\begin{lemma}\label{l: coarea} 
Suppose that $M$ is a complete Riemannian manifold.  
Assume that $f\in H^{1,1}(M)$. 
Then for every open subset $U$ of $M$ the sets 
$A(t) : =\set{x\in U: f(x)>t}$ have finite perimeter for a.e. $t$ in $\BR$ and
$$
\int_U\mod{\grad f} \wrt V=\int_{\BR} P\big( A(t),U\big)\wrt t.
$$
\end{lemma}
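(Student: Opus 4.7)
The plan is to mimic the Euclidean proof of the $BV$ coarea formula (as in \cite{EG}), establishing the two inequalities
\begin{equation*}
\int_U \mod{\grad f}\wrt V \le \int_{\BR} P\bigl(A(t),U\bigr)\wrt t
\qquad\text{and}\qquad
\int_U \mod{\grad f}\wrt V \ge \int_{\BR} P\bigl(A(t),U\bigr)\wrt t
\end{equation*}
separately.  The a.e.\ finiteness of $t\mapsto P(A(t),U)$ then follows automatically, since the left-hand side is finite by the hypothesis $f\in H^{1,1}(M)$.

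For the $\le$ inequality I would argue by duality.  Given $\omega\in\Lambda^1_c(U)$ with $\norm{\omega}{\infty}\le 1$, the telescoping identity
\begin{equation*}
f(x) = \int_{\BR}\bigl(\1_{\{f>t\}}(x)-\1_{(-\infty,0)}(t)\bigr)\wrt t
\end{equation*}
(which holds pointwise with absolutely convergent integrand once multiplied by any bounded function of $x$ with finite support of $f$) combined with Fubini's theorem and the vanishing of $\int_U\Div\omega\wrt V$ (a consequence of $\omega$ being compactly supported in $U$) gives
\begin{equation*}
\int_U f\,\Div\omega\wrt V = \int_{\BR}\int_U \1_{A(t)}\,\Div\omega\wrt V \wrt t.
\end{equation*}
For each $t$ the inner integral is bounded above by $P(A(t),U)$ by the very definition of perimeter.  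Taking the supremum over $\omega$, and remembering that $\Var(f,U)=\int_U\mod{\grad f}\wrt V$ for $f\in H^{1,1}(M)$, yields the $\le$ inequality.

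For the $\ge$ inequality I would use approximation.  First, for a smooth compactly supported $g$ on $M$, Sard's theorem applied in local charts shows that for a.e.~$t$ the level set $\{g=t\}$ is a (possibly empty) smooth compact hypersurface, the classical coarea formula for Lipschitz maps gives $\int_U\mod{\grad g}\wrt V=\int_{\BR}\sigma(\{g=t\}\cap U)\wrt t$, and for regular values~$t$ one identifies $\sigma(\{g=t\}\cap U)=P(\{g>t\},U)$.  Next, by \cite[Thm~2.7]{Hebey} pick $f_n\in C^\infty_c(M)$ with $f_n\to f$ in $L^1(M)$ and $\int_M\mod{\grad f_n}\wrt V\to\int_M\mod{\grad f}\wrt V$.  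Passing to a subsequence along which $f_n\to f$ pointwise a.e., Fubini shows that for a.e.~$t$ one has $\1_{\{f_n>t\}}\to\1_{A(t)}$ in $L^1_{\loc}(U)$; the lower semicontinuity of the perimeter with respect to $L^1_{\loc}$ convergence then gives $P(A(t),U)\le\liminf_n P(\{f_n>t\},U)$, and Fatou's lemma produces
\begin{equation*}
\int_{\BR}P(A(t),U)\wrt t \le \liminf_n \int_U\mod{\grad f_n}\wrt V = \int_U\mod{\grad f}\wrt V.
\end{equation*}

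The main technical obstacle is bookkeeping in the layer-cake step: one must apply the telescoping identity to a signed $f$, justify Fubini (for which the difference $\1_{\{f>t\}}(x)-\1_{(-\infty,0)}(t)$ is $L^1$ in $t$ with norm $\mod{f(x)}$, controlling things via $\norm{\Div\omega}{\infty}\,\norm{f}{L^1(U)}$), and use compactness of $\supp\omega$ inside $U$ to kill the constant term $\1_{(-\infty,0)}(t)$.  A secondary subtle point is the classical coarea formula for smooth $g$ on a Riemannian manifold, which is a routine transcription of the Euclidean argument via Sard's theorem in charts; once that is in place, the rest is a standard approximation argument.
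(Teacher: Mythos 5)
Your proposal is correct and follows essentially the same route the paper indicates: the paper gives no details for this lemma, stating only that ``the proof uses the density of $C^1_c(M)$ in $H^{1,1}(M)$ and mimics closely the argument in the Euclidean setting [EG],'' and your two-inequality argument (layer-cake plus Fubini and $\int_U\Div\omega\wrt V=0$ for one direction; Sard plus the smooth coarea formula, Hebey's density of $C^\infty_c(M)$ in $H^{1,1}(M)$, lower semicontinuity of perimeter under $L^1_{\loc}$ convergence, and Fatou for the other) is precisely the Evans--Gariepy argument transplanted to the Riemannian setting. The only minor point worth making explicit in a write-up is that the approximants converge in the full $H^{1,1}(M)$ norm, so $\int_U\mod{\grad f_n}\wrt V\to\int_U\mod{\grad f}\wrt V$ for the fixed open set $U$, which is what the final Fatou step needs.
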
 

\begin{remark}
We observe {\it en passant} that using Lemma~\ref{l: approximation}, one can prove the following coarea formula for functions in $BV(M)$
$$
\Var(f,M)=\int_{\BR} P\big(A(t),M\big)\wrt t.
$$
\end{remark}
Now we are ready to prove the equivalence 
of property \PP\ and Cheeger's inequality. 
We recall that the constant $I_M$ is defined in Section~\ref{s: Geom
prop}.

\begin{theorem} \label{t: Cheeger and (P)} 
Suppose that $M$ is a complete Riemannian manifold.  The following hold:
\begin{enumerate}
\item[\itemno1]
$M$ possesses property \PP\ if and only if $h(M)>0$.
Furthermore $I_M = h(M)$;
\item[\itemno2]
if the Ricci curvature of $M$ is bounded from below,
then $M$ possesses property \PP\ if and only if $b(M)>0$.
\end{enumerate}
\end{theorem}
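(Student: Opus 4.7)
I would prove (i) by establishing the two inequalities $I_M\geq h(M)$ and $h(M)\geq I_M$ separately, then deduce (ii) by combining (i) with the classical Cheeger and Buser inequalities.

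For $I_M\geq h(M)$, fix a bounded open set $A\subset M$ of finite measure and $\kappa>0$, and consider the truncated distance $f_\kappa(x):=\min\bigl(\rho(x,A^c),\kappa\bigr)$, a compactly supported Lipschitz function and hence in $H^{1,1}(M)$. Applying the coarea formula (Lemma~\ref{l: coarea}) with $U=M$ and using that $\{f_\kappa>t\}=A^t$ for $t\in(0,\kappa)$, equals $M$ (with zero perimeter, since $\chi_M$ is constant) for $t<0$, and is empty for $t\geq\kappa$, yields
$$
\int_0^\kappa P(A^t,M)\wrt t
=\int_M\mod{\grad f_\kappa}\wrt V
\leq\mu(A_\kappa),
$$
the last inequality because $\mod{\grad f_\kappa}\leq 1$ and vanishes outside $A_\kappa$. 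Combining with Proposition~\ref{p: cheeger for bd} and the monotonicity $\mu(A^t)\geq\mu(A^\kappa)=\mu(A)-\mu(A_\kappa)$ for $t\leq\kappa$, one obtains
$$
h(M)\,\kappa\,\bigl(\mu(A)-\mu(A_\kappa)\bigr)\leq\mu(A_\kappa),
$$
equivalent to $\mu(A_\kappa)\geq\frac{h(M)\,\kappa}{1+h(M)\,\kappa}\,\mu(A)$. This both verifies property \PP\ and gives $C_t\geq h(M)/(1+h(M)\,t)$, so passing to the limit via (\ref{f: Ct decr}) yields $I_M\geq h(M)$.

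For the reverse inequality $h(M)\geq I_M$, assume $M$ has property \PP\ and take any bounded open set $A$ with smooth boundary. Property \PP\ gives $\mu(A_\kappa)/\kappa\geq C_\kappa\,\mu(A)$ for $\kappa\in(0,\kappa_0]$ with $C_\kappa\to I_M$ as $\kappa\to 0^+$. The tubular neighbourhood theorem applied to the compact smooth hypersurface $\partial A$ shows that the inward normal exponential map parametrises $A_\kappa$ for small $\kappa$, and the Jacobian expansion around $t=0$ gives the standard limit $\lim_{\kappa\to 0^+}\mu(A_\kappa)/\kappa=\sigma(\partial A)$. Passing to the limit yields $\sigma(\partial A)\geq I_M\,V(A)$; taking infimum over smooth bounded open $A$ produces $h(M)\geq I_M$, completing the proof of (i).

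For (ii), I combine (i) with the classical Cheeger--Buser pair. Cheeger's inequality (\ref{f: CI}) gives $b(M)\geq h(M)^2/4$ unconditionally, so $h(M)>0\Rightarrow b(M)>0$. Conversely, under the Ricci bound $\Ric\geq -(d-1)\delta^2$, Buser's inequality supplies a constant $C=C(d,\delta)$ with $b(M)\leq C\bigl(h(M)+h(M)^2\bigr)$, forcing $b(M)=0$ whenever $h(M)=0$. Hence $h(M)>0\iff b(M)>0$, which by (i) is equivalent to property \PP. The main obstacle I anticipate is the rigorous justification of $\lim_{\kappa\to 0^+}\mu(A_\kappa)/\kappa=\sigma(\partial A)$, requiring the tubular neighbourhood theorem and a uniform expansion of the Jacobian of the normal exponential map on the compact hypersurface $\partial A$; all other ingredients (the coarea formula, the perimeter form of Cheeger's inequality in Proposition~\ref{p: cheeger for bd}, and the Cheeger/Buser inequalities) combine cleanly.
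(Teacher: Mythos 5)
Your proposal is correct, and the overall architecture mirrors the paper's: prove $I_M=h(M)$ by two opposite inequalities (using the perimeter version of Cheeger's inequality in Proposition~\ref{p: cheeger for bd} and the coarea formula in Lemma~\ref{l: coarea} for one direction, and the Minkowski-content characterisation of $\sigma(\partial A)$ for the other), then deduce (ii) from the Cheeger and Buser inequalities. The one genuine variation is in the implication $h(M)>0\Rightarrow\PP$: the paper applies the coarea formula to the untruncated distance $f=\rho(\cdot,A^c)$ on $U=A^t$ to obtain $V(A^t)=\int_t^\infty P(A^s,M)\wrt s$, differentiates to get the differential inequality $\frac{\wrt}{\wrt t}V(A^t)\le -h(M)\,V(A^t)$, and integrates to obtain the sharp bound $V(A_t)\ge\bigl(1-\e^{-h(M)t}\bigr)V(A)$; you instead truncate the distance at level $\kappa$, apply coarea once on $U=M$, and close the estimate by a single algebraic step, obtaining the slightly weaker but equally serviceable bound $V(A_\kappa)\ge\frac{h(M)\kappa}{1+h(M)\kappa}V(A)$. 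Your version avoids the need to justify a.e.\ differentiability of $t\mapsto V(A^t)$ and the Gr\"onwall-type integration; the paper's version yields a cleaner closed-form lower bound for $V(A_t)$ valid for all $t>0$. Both give $C_t\to h(M)$ as $t\to 0^+$ and hence $I_M\ge h(M)$. For the reverse inequality you spell out the tubular-neighbourhood argument behind the identification of the lower inner Minkowski content with $\sigma(\partial A)$, whereas the paper simply cites this fact; that is a fair addition, and you rightly flag it as the main point requiring care.
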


\begin{proof}
First we prove \rmi.
Assume that $M$ possesses property \PP\ and denote by $A$ 
a bounded open subset of $M$ with smooth  boundary. 
By Proposition \ref{p: prop PP} 
$$
\frac{V(A_\kappa)}{\kappa}
\ge \,\frac{1-\e^{-I_Mt}}{t}\, V(A) \quant t\in\BR^+.
$$
Hence
$$
\liminf_{\kappa\to 0} \frac{V(A_\kappa)}{\kappa} 
\ge I_M \,V(A).
$$ 
Since the limit in the left hand side 
is the lower inner Minkowski content of $\partial A$, 
which coincides with $\sigma(\partial A)$ because $\partial A$ is smooth, we have proved that 
Cheeger's isoperimetric constant $h(M)$ is at least $I_M$.

To prove the converse, assume that $h(M)>0$ 
and let $A$ be a bounded open set in $M$. 
Since the manifold $M$ is complete, the function $f$ defined by 
$f(x)=\rho(x,A^c)$ (here $A^c$ denotes the complementary set of $A$ in $M$)
is Lipschitz and $\mod{\grad f}=1$ almost everywhere. 
Recall that for each $t$ in $\BR$ we denote by $A^t$ the set 
$\set{x\in A: f(x)>t}$.  Notice that 
$$
P\big(A^s,A^t\big)
= \begin{cases}
P \big(A^s,M\big) & {\text{ if}}\  s>t;\\
0                  & {\text{ if}}\  s<t. \\
\end{cases}
$$
Thus, by the coarea formula 
$$
V\big(A^t\big)=\int_{A^t}\mod{\grad f}\wrt V 
=\int_t^\infty P\big(A^s,M\big) \wrt s.
$$
Hence, by  Proposition~\ref{p: cheeger for bd}
$$
\frac{\wrt}{\wrt t}V\big(A^t\big)
= -P\big(A^t,M\big)\le-h(M)\,V\big (A^t\big) \qquad \text{for a.e.}\ t\in\BR.
$$
This differential inequality implies that 
$V\big(A^t\big)\le e^{-h(M)\,t} V(A)$ for all $t>0$, i.e.
\begin{align*}
V(A_t)
& =    V(A)-V\big(A^t\big) \\ 
& \ge  (1-e^{-h(M)\,t})\,V(A) \\ 
& \ge  h(M) \,t\, V(A) \quant t\in (0,1).
\end{align*}
Thus $M$ possesses property \PP, and $h(M) \leq I_M$, as required
to conclude the proof of \rmi.

To prove \rmii\ we recall that
if $M$ has Ricci curvature bounded below by $-K$, for some $K\ge0$ then
\begin{equation}\label{Buser}
b(M)\le C\big(\sqrt{K}\ h(M)+h(M)^2\big),
\end{equation}
where $C$ is a constant which 
depends only on the dimension of $M$ \cite{Bu, Le}.
This inequality, together with Cheeger's inequality (\ref{f: CI}),
shows that the constants $h(M)$ and 
$b(M)$ are equivalent.  The required conclusion follows directly
from \rmi. 
\end{proof}

\begin{remark}
We remark that property \PP\ is invariant under
quasi-isometries.   Indeed, the fact that $h(M)$
is positive is invariant under quasi-isometries \cite[Remark~V.2.2]{Chavel}. 
\end{remark}

\begin{remark}
Suppose that $M$ is a complete Riemannian manifold. 
If $b(M) = 0$, then $M$ has not property \PP\ by Theorem~\ref{t: Cheeger
and (P)}~\rmi.

Observe also that if $M$ has Ricci curvature bounded from below,
and a spectral gap, then~$M$ has property \PP.
In particular noncompact Riemannian symmetric spaces 
and Damek-Ricci spaces have property \PP.
\end{remark}

\section{Applications} \label{s: Applications}

In this section we illustrate some applications
of the theory developed in the previous sections.
Other applications will appear in \cite{CMM}. 

The first application we consider is to spectral multipliers
on certain Riemannian manifolds.   
Suppose that $M$ is a complete Riemannian manifold with
positive injectivity radius $\inj(M)$, Ricci curvature
bounded from below, and positive Cheeger isoperimetric constant $h(M)$.  
By Cheeger's inequality the bottom $b(M)$ of the $\ld{M}$ spectrum
of the Laplace--Beltrami operator $\cL$ on $M$ is positive.
We denote by $\mu$ the Riemannian measure on $M$.

As shown in Section~\ref{s: Cheeger}, under these assumptions
$M$ possesses property \PP.  Furthermore,
there exist constants $\al$, $\al'$,
$\be$, $\be'$, $C_1$ and $C_2$ such that
$$
C_1 \,(1+r^2)^{\al'/2} \, \e^{\be'\, r}
\leq \mu\bigl(B(p,r)\bigr)
\leq C_2 \, (1+r^2)^{\al/2} \, \e^{\be\, r}
\quant r \in \BR^+.
$$
We say that $M$ has ($N$-)bounded geometry provided that the derivatives of
the Riemann tensor up to the order $N$ are uniformly bounded.
Clearly if $M$ has $N$-bounded geometry, then the Ricci
curvature of $M$ is bounded from below. 

For each $\si$ in $\BR^+$ we denote by $\bS_{\si}$ the
strip $\{\zeta \in \BC: \mod{\Im \zeta} < \si \}$.

\begin{definition}  \label{d: HinftySbetakappa}
Suppose that $\kappa$ is a positive integer and that $\si$
is in $\BR^+$.
The space $H^\infty (\bS_{\si};\kappa)$ is the vector space of 
all functions $f$ in $H^\infty (\bS_{\si})$ for which
there exists a positive constant $C$ such that
for each $\vep$ in $\{-1,1\}$
\begin{equation} \label{f: Ssigmakappa}
\mod{D^k f(\zeta)} \leq {C} \, (1+\mod{\zeta})^{-k}
\quant k \in \{0,1,\ldots,\kappa\} \quant \zeta\in \bS_{\si}.
\end{equation}
If (\ref{f: Ssigmakappa}) holds, 
we say that $f$ satisfies a \emph{Mihlin--H\"ormander condition 
of order $\kappa$} on the strip~$\bS_{\si}$.  We endow 
$H^\infty (\bS_{\si};\kappa)$ with the norm 
$$
\norm{f}{\si;\kappa}
= \max_{k \in \{0,1,\ldots,\kappa\}}  \sup_{\zeta\in \bS_{\si}}
\, (1+\mod{\zeta})^{k} \, \mod{D^k f(\zeta)}.
$$
\end{definition}

The following result complements a celebrated result of Taylor
\cite[Thm~1]{Ta}.

\begin{theorem}
Suppose that $M$ is an $n$ dimensional complete Riemannian manifold with
$N$-bounded geometry, where $N$ is an integer $\geq n/2+1$.
Assume that the injectivity radius $\inj(M)$ and the bottom $b(M)$
of the $\ld{M}$ spectrum of $M$ are positive.  
Suppose that $f$ is in $H^\infty (\bS_{\si};\kappa)$, where $\si
\geq \be/2$ and $\kappa > \max(\al/2+1, n/2+1)$.   Then $f(\cL_b^{1/2})$
extends to a bounded operator from $\hu{\mu}$, from $\ly{M}$ to $BMO(\mu)$
and on $\lp{M}$ for all $p$ in $(1,\infty)$.
\end{theorem}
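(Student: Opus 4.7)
The plan is to apply Theorem~\ref{t: singular integrals} to the operator $\cT = f(\cL_b^{1/2})$, where $\cL_b := \cL - b(M)$ is shifted so that $\cL_b^{1/2}$ is a self-adjoint nonnegative operator on $\ld{\mu}$. Boundedness on $\ld{\mu}$ is immediate from the spectral theorem, since the spectrum of $\cL_b^{1/2}$ lies in $[0,\infty) \subset \bS_\sigma$ and $\norm{f}{\infty} \leq \norm{f}{\sigma;\kappa}$. Since the Hilbert space adjoint of $\cT$ is $g(\cL_b^{1/2})$ with $g(z) := \overline{f(\bar z)} \in H^\infty(\bS_\sigma;\kappa)$, parts \rmi\ and \rmii\ of Theorem~\ref{t: singular integrals} reduce the matter to verifying the local H\"ormander-type condition $\nu_k < \infty$ uniformly in $f \in H^\infty(\bS_\sigma;\kappa)$, at any chosen scale $b' > R_0/(1-\be)$.

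Represent $\cT$ through the wave equation: with $\check f$ the inverse Fourier transform of the even extension of $f|_\BR$,
$$
f(\cL_b^{1/2}) = \frac{1}{2\pi} \int_\BR \check f(t)\, \cos\bigl(t \cL_b^{1/2}\bigr) \wrt t.
$$
Shifting the contour to $\Im z = \pm\sigma$ and integrating by parts $\kappa$ times yields
$$
\mod{\check f(t)} \leq C \, \norm{f}{\sigma;\kappa}\, e^{-\sigma \mod{t}} \, (1+\mod{t})^{-\kappa}.
$$
Fix a cutoff $\phi \in C_c^\infty([-2,2])$ with $\phi \equiv 1$ on $[-1,1]$ and split $\cT = \cT_{\loc} + \cT_\infty$ by inserting $\phi(t)$ and $1-\phi(t)$ under the integral.

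For $\cT_{\loc}$, finite propagation speed of the wave equation on a complete Riemannian manifold confines the integral kernel to $\{(x,y): \rho(x,y) \leq 2\}$. The hypotheses of $N$-bounded geometry with $N \geq n/2+1$ and positive injectivity radius permit uniform comparison to Euclidean $n$-space on normal coordinate charts of fixed radius; a classical Mihlin--H\"ormander theorem of order $\kappa > n/2+1$ in each chart, patched together via a uniformly locally finite partition of unity, produces a Calder\'on--Zygmund type kernel at scale $\leq 2$ with constants controlled by $\norm{f}{\sigma;\kappa}$. In particular $\nu_{k_{\loc}} < \infty$.

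For $\cT_\infty$, fix a ball $B$ of radius $r_B \leq b'$ with $y, y' \in B$; for $x \in (2B)^c$, finite propagation restricts the integrand to $\mod{t} \geq \rho(x,c_B)-r_B$. A mean-value estimate for $\cos(t\cL_b^{1/2})(x,\cdot)$ on $B$, again leveraging bounded geometry, contributes an extra factor $\lesssim r_B \mod{t}$ to $\mod{k_\infty(x,y) - k_\infty(x,y')}$. Applying Cauchy--Schwarz on dyadic annuli about $c_B$, together with the $\ld{\mu}$-contractivity of the wave propagator and the volume bound $\mu\bigl(B(c_B, r)\bigr) \leq C_2(1+r)^{\alpha}\,e^{\be r}$, one reduces $\int_{(2B)^c} \mod{k_\infty(x,y) - k_\infty(x,y')} \wrt \mu(x)$ to an integral of the form
$$
\int_1^\infty (1+r)^{\alpha/2}\, e^{\be r/2} \int_r^\infty e^{-\sigma t}(1+t)^{-\kappa+1}\, \wrt t \,\wrt r,
$$
which converges precisely when $\sigma \geq \be/2$ and $\kappa > \alpha/2 + 1$. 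Combining with the local estimate gives $\nu_k < \infty$, and Theorem~\ref{t: singular integrals} then delivers the asserted boundedness on $\lp{\mu}$ for $p \in (1,\infty)$, from $\hu{\mu}$ to $\lu{\mu}$, and from $\ly{\mu}$ to $BMO(\mu)$. The principal obstacle is the uniform local theory on $M$: while the matching of exponential volume growth against Fourier decay through $\sigma \geq \be/2$ is transparent, the local Mihlin--H\"ormander machinery on bounded-geometry charts must be carried out carefully so that the dimensional threshold $\kappa > n/2+1$ indeed suffices to patch the local pieces with constants uniform in $x \in M$.
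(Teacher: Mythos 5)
Your strategy for the local piece matches the paper's: both invoke Taylor's wave-equation representation $f(\cL_b^{1/2})=\smallfrac{1}{2\pi}\int\check f(t)\cos(t\cL_b^{1/2})\wrt t$, split at $\mod{t}\sim 1$, use finite propagation speed to localize $\cT_{\loc}$, and then apply Theorem~\ref{t: singular integrals} via the H\"ormander condition for the local kernel. The paper simply delegates the local H\"ormander estimate to Taylor \cite{Ta} and Cheeger--Gromov--Taylor \cite{CGT}, which is what you reconstruct.

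The treatment of the tail is where you part ways with the paper, and where a real gap appears. The paper (following Taylor) observes that the kernel of $f_\infty(\cL_b^{1/2})$ is an $\lu{\mu}$ function in $x$, uniformly in $y$, so $\cT_\infty$ is bounded on $\lu{\mu}$ and $\ly{\mu}$ outright; no cancellation estimate is needed, and the H\"ormander condition for the tail follows trivially by the triangle inequality. You instead try to prove a genuine H\"ormander estimate for $k_\infty$ via a mean value bound, which brings in an extra factor $r_B\mod{t}$, i.e.\ replaces $(1+t)^{-\kappa}$ by $(1+t)^{-\kappa+1}$. But then your displayed integral, with $\si=\be/2$, behaves like $\int_1^\infty(1+r)^{\al/2-\kappa+1}\wrt r$, which converges only when $\kappa>\al/2+2$. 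That exceeds the stated threshold $\kappa>\al/2+1$ by a full derivative, so the argument as written does not prove the theorem. (There is also a secondary concern that the mean-value/gradient estimate for $\cos(t\cL_b^{1/2})(x,\cdot)$ at all scales $t$ on a bounded-geometry manifold requires justification that the $\ld{\mu}$-contractivity alone does not supply.) The fix is exactly the paper's: for $\cT_\infty$, drop the difference $k_\infty(x,y)-k_\infty(x,y')$, bound $\sup_y\int_M\mod{k_\infty(x,y)}\wrt\mu(x)$ directly by Cauchy--Schwarz on dyadic shells, and then the computation gives convergence under $\kappa>\al/2+1$ and $\si\geq\be/2$, recovering the stated threshold.
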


\begin{proof}
Denote by $\cL_b$ the operator $\cL -b\, \cI$, formally
defined on $\ld{\mu}$. 
The strategy of the proof of \cite[Thm~1]{Ta} is to decompose
the operator $f(\cL_b^{1/2})$ as the sum of two operators,
$f_0(\cL_b^{1/2})$ and $f_\infty(\cL_b^{1/2})$,
where $f_\infty(\cL_b^{1/2})$ is bounded on 
$\lu{\mu}$ and $\ly{\mu}$ and $f_0(\cL_b^{1/2})$ is of weak type $1$.

To prove the latter, Taylor \cite{Ta} and 
Cheeger, Gromov and Taylor \cite{CGT} prove that the integral kernel
of $f_0(\cL_b^{1/2})$, which is compactly supported,
satisfies a H\"ormander type integral condition.
Taking this for granted, by Theorem~\ref{t: singular integrals},
the operator $f_0(\cL_b^{1/2})$ is bounded from
$\hu{\mu}$ to $\lu{\mu}$, and from $\ly{\mu}$ to $BMO(\mu)$. 
Therefore the same is true for $f(\cL_b^{1/2})$. 
The boundedness of $f(\cL_b^{1/2})$ on $\lp{\mu}$ the follows
by interpolation.
\end{proof}

Note that this result applies to Riemannian symmetric spaces
of the noncompact type, and to Damek--Ricci spaces. 
In the case where $M$ is a symmetric space of the 
noncompact type and real rank $>1$, J.Ph.~Anker \cite{A}
extended Taylor's result \cite[Thm~1]{Ta} to 
certain multiplier operators for the spherical Fourier transform.  

Suppose that $G$ is a noncompact semisimple Lie group
with finite centre, and denote by~$K$ a maximal 
compact subgroup thereof, and by $X$ the associated 
Riemannian symmetric space of the noncompact type $G/K$
(the $G$ invariant metric on $X$ is induced by the Killing form of $G$).
Denote by $\mu$ a $G$-invariant measure on $X$.
We complement Anker's result by showing
that (some) of the multiplier operators
he considers satisfy natural $\hu{\mu}$-$\lu{\mu}$
and $\ly{\mu}$-$BMO(\mu)$ estimates.  

Suppose that $G = KAN$ is an Iwasawa decomposition of
$G$, where $A$ is abelian and $N$ is nilpotent.  
Denote by $\frg$ and $\fra$ the Lie algebras of
$G$ and $A$ respectively, and by $\rho$ the half sum
of the positive roots of $(\frg,\fra)$ with
multiplicities.  Denote by $\fra^*$ the dual of $\fra$.
An important r\^ole in what follows 
is played by a certain tube $\bT$ in the complexified dual
$\fra_{\BC}^*$ of~$\fra$.  To define~$\bT$, denote by $\bW$ the
convex hull of the vectors $\{w\cdot \rho:
w \in W\}$ in~$\fra^*$, where $W$ denotes the Weyl group of $G$.
Then define $\bT = \fra^*+i\bW$. 

\begin{definition}   \label{d: HinftySbetakappa Symm} 
Suppose that $\kappa$ is a positive integer.
The space $H^\infty (\bT;\kappa)$ is the vector space of 
all Weyl invariant bounded holomorphic functions $m$ on $\bT$
for which there exists a positive constant $C$ such that
for every multiindex $\be$ with $\mod{\be}\leq \kappa$
\begin{equation} \label{f: Ssigmakappa Symm}
\mod{D^{\be} m(\zeta)} \leq {C} \, (1+\mod{\zeta})^{-\mod{\be}}
\quant \zeta\in \bT.
\end{equation}
If (\ref{f: Ssigmakappa Symm}) holds, 
we say that $f$ satisfies a \emph{Mihlin--H\"ormander condition 
of order $\kappa$} on the tube~$\bT$.  We endow 
$H^\infty (\bT;\kappa)$ with the norm 
$$
\norm{f}{\kappa}
= \max_{\mod{\be} \leq \kappa}  \sup_{\zeta\in \bT}
\, (1+\mod{\zeta})^{\mod{\be}} \, \mod{D^\be f(\zeta)}.
$$
\end{definition}

Suppose that $k$ is a $K$-bi-invariant distribution
on $G$, and denote by $m$ its spherical Fourier transform:
$m$ may be thought of as a distribution on $\fra^*$.
If $m$ is bounded on $\fra^*$, then the convolution operator 
$\cT_k$, defined by
$$
\cT_k f = f * k
\quant f \in C_c^\infty (G)
$$
extends to a bounded operator on $\ld{\mu}$.  

\begin{theorem}
Suppose that $\kappa>(\dim X)/2+1$.  Suppose that
$k$ is a $K$-bi-invariant distribution such that
its spherical transform $m$ is in $H^{\infty}(\bT; \kappa)$.
Then the operator $\cT_k$ extends to a bounded operator 
from $\hu{\mu}$ to $\lu{\mu}$ and from $\ly{\mu}$ to $BMO(\mu)$.
\end{theorem}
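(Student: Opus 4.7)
The plan is to apply Theorem~\ref{t: singular integrals} to $\cT_k$. This requires verifying that (a)~$\cT_k$ is bounded on $\ld{\mu}$, and (b)~the integral kernel of $\cT_k$, which off the diagonal of $X\times X$ is a locally integrable function, satisfies both of the local H\"ormander type integral conditions $\nu_k<\infty$ and $\upsilon_k<\infty$ from the statement of that theorem. Item (a) is immediate from Plancherel's theorem for the spherical Fourier transform: $\opnorm{\cT_k}{\ld{\mu}} \leq \sup_{\la\in\fra^*}\mod{m(\la)} \leq \norm{m}{\kappa}$.

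For (b), the integral kernel is $K(x,y)=k(y^{-1}x)$, and the left $G$-invariance together with the $K$-bi-invariance of $k$ reduce the two integrals defining $\nu_k$ and $\upsilon_k$ to estimates of the same type on a single radial function, so by Remark~\ref{r: singular integrals} it suffices to prove that
$$
\sup_{0<r\leq b}\, r\, \int_{\{d(o,g)\geq 2r\}} \mod{\grad k(g)} \, \wrt \mu(g) < \infty,
$$
where $o$ is a fixed base point and $d$ is the Riemannian distance on $X$. To obtain this, I would use the standard spectral decomposition of Clerc--Stein, Taylor and Anker~\cite{A}: pick a smooth Weyl invariant cutoff $\phi$ on $\fra^*_{\BC}$ that equals $1$ on $\set{\mod\zeta\leq 1}$ and is supported in $\set{\mod\zeta\leq 2}$, and split $m = m_0 + m_1$ with $m_0 = \phi\cdot m$ and $m_1 = (1-\phi)\cdot m$. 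The low-frequency piece $m_0$ has compact spectral support, and the holomorphic extension of $m$ to $\bT$ allows a Paley--Wiener shift of contour; combined with Harish-Chandra's integral formula for the elementary spherical function and the Trombi--Varadarajan / Anker--Gindikin--Karpelevich bounds on the Plancherel density $\mod{\bc(\la)}^{-2}$, this produces a smooth kernel $k_0$ with
$$
\mod{k_0(g)}+\mod{\grad k_0(g)} \lesssim \e^{-\mod\rho\, d(o,g)}\,(1+d(o,g))^{-N}
$$
for every $N\in\BN$. Since the volume of geodesic balls of radius $r$ on $X$ grows at most like $\e^{2\mod\rho\, r}\,(1+r)^{A}$ for some constant $A$, this decay beats the volume growth and yields the required integral estimate for $k_0$ uniformly in $r\in(0,b]$. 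The high-frequency piece $m_1$ is treated by a local Mihlin--H\"ormander argument on balls of radius at most $b$: the hypothesis $\kappa>(\dim X)/2+1$ is exactly the classical Mihlin threshold, and the bounded geometry of $X$ (with the exponential chart at each point providing a uniform Euclidean model) permits transplanting the Euclidean Mihlin--H\"ormander theorem to produce the required local integral condition for $k_1$.

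The principal obstacle is the pointwise control of $\grad k_0$, for which the tools are Harish-Chandra's spherical function expansion, the Trombi--Varadarajan estimates on $\mod{\bc(\la)}^{-2}$, and the Paley--Wiener contour shift into the tube $\bT$. These are precisely the ingredients underlying Anker's $\lp{\mu}$ multiplier theorem in~\cite{A}, and I would invoke them rather than reprove them. Once $\nu_k$ and $\upsilon_k$ are both finite, Theorem~\ref{t: singular integrals}~\rmi\ and~\rmii\ yield, respectively, boundedness of $\cT_k$ from $\hu{\mu}$ to $\lu{\mu}$ and from $\ly{\mu}$ to $BMO(\mu)$, which are the endpoint estimates promoted beyond Anker's $\lp{\mu}$ bounds.
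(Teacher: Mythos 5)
Your plan (apply Theorem~\ref{t: singular integrals} to $\cT_k$ by verifying the local H\"ormander condition for the kernel $K(x,y)=k(y^{-1}x)$) is sound in outline, but your choice of decomposition introduces a gap that the paper's proof avoids. The paper cuts the kernel \emph{spatially}: it sets $k_0=\psi k$ and $k_\infty=(1-\psi)k$ with $\psi$ a compactly supported bi-$K$-invariant cutoff identically $1$ near $e$, then quotes Anker~\cite{A} for the two facts $k_\infty\in\lu{G}$ and $\nu_{k_0},\upsilon_{k_0}<\infty$, handles $\cT_{k_\infty}$ trivially (an $\lu{\mu}$-bounded and $\ly{\mu}$-bounded convolution is \emph{a fortiori} $\hu{\mu}\to\lu{\mu}$ and $\ly{\mu}\to BMO(\mu)$), and applies Theorem~\ref{t: singular integrals} only to $\cT_{k_0}$. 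You instead cut the \emph{multiplier} in frequency, $m=m_0+m_1$ with $m_0=\phi m$, and this is where the argument breaks down. The cutoff $\phi$ is smooth but not holomorphic, so $m_0=\phi m$ does not extend holomorphically to the tube $\bT$ and there is no Paley--Wiener contour shift available for $m_0$; you are conflating the ingredient that produces exponential decay (holomorphy of the \emph{full} $m$ in $\bT$) with the ingredient that produces polynomial decay (compact support and smoothness of $m_0$). Without a contour shift, the decay of $k_0$ coming from the Harish--Chandra expansion is only of the order $\e^{-\rho(H)}\,(1+\abs{H})^{-N}$, i.e.\ roughly $\e^{-\abs{\rho}\,d(o,g)}\,(1+d(o,g))^{-N}$, and this does \emph{not} beat the ball-volume growth $\e^{2\abs{\rho}\,r}\,(1+r)^{A}$: one has $\e^{-\abs{\rho}\,r}\e^{2\abs{\rho}\,r}=\e^{\abs{\rho}\,r}\to\infty$, so the integrals $\int_{\{d(o,\cdot)\ge 2r\}}\abs{\grad k_0}\wrt\mu$ diverge and your claimed H\"ormander estimate for the low-frequency piece fails. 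The deeper problem is structural: neither $k_0$ nor $k_1=k-k_0$ individually decays like $\e^{-2\abs{\rho}\,r}$; only their sum $k$ does, because the $\e^{-2\abs{\rho}}$ decay is a consequence of the holomorphy of $m$ on all of $\bT$ and is destroyed by multiplying $m$ by a non-holomorphic cutoff. That is exactly why the spatial decomposition is the right one here — $(1-\psi)k$ inherits the full far-field decay of $k$ — and why Anker, and the paper, use it. I would replace the frequency split by the spatial one and invoke Anker's two kernel estimates directly, as the paper does.
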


\begin{proof}
Denote by $\psi$ a $K$-bi-invariant smooth function
on $G$ with compact support which is identically
$1$ in a neighbourhood of the identity, and define the 
distributions $k_0$ and $k_\infty$ by
$$
k_0 = \psi \, k
\qquad\hbox{and}\qquad
k_\infty = (1- \psi) \, k.
$$
Anker \cite[Thm~1]{A} shows that
$k_\infty$ is, in fact, a function in $\lu{G}$.
Therefore, the operator $\cT_{k_\infty}$, defined by
$\cT_{k_\infty} f= f\mapsto f*k_\infty$
extends to a bounded operator on $\lu{\mu}$ and on $\ly{\mu}$, 
and \emph{a fortiori}
to a bounded operator from $\hu{\mu}$ to $\lu{\mu}$ and 
from $\ly{\mu}$ to $BMO(\mu)$.  

Furthermore, Anker proves that
$k_0$ is locally integrable off the origin, and
satisfies the following H\"ormander type integral~inequality 
$$
\sup_{B\in \cB_1} \sup_{y \in B} \int_{(2B)^c}
\mod{k_0(y^{-1}x) - k_0(x)} \wrt \mu(x)
< \infty.
$$
Define the operator $\cT_{k_0}$ by $\cT_{k_0}f = f * k_0$.
Denote by $\Delta$ the diagonal in $X\times X$,
and define the locally integrable function 
$t$ on $X\times X \setminus \Delta$ by
$$
t(x,y) = k_0(y^{-1}x).
$$
It is straightforward to check that $t$ is the kernel (see
the definition  at the beginning of Section~\ref{s: Singular integrals})
of the operator $\cT_{k_0}$, and that $t$ satisfies
conditions $\nu_t<\infty$ and $\upsilon_t < \infty$.
By Theorem~\ref{t: singular integrals}~\rmiii\  the operator $\cT_{k_0}$
extends to a bounded operator from $\hu{\mu}$ to $\lu{\mu}$ and
from $\ly{\mu}$ to $BMO(\mu)$.

Since $\cT_k = \cT_{k_0} + \cT_{k_\infty}$,
the required boundedness properties of $\cT_k$ follow
directly from those of $\cT_{k_0}$ and $\cT_{k_\infty}$.
\end{proof}

Our last application is to the boundedness of Riesz transforms.
This is a very fashionable and interesting subject: see
\cite{CD,ACDH} for recent results on manifolds, and the references
therein for less recent results. 
Suppose that $M$ is a complete Riemannian manifolds satisfying
the following assumptions:  the Riemannian measure $\mu$ is 
locally doubling, and the following
scaled local Poincar\'e inequality holds:
for every positive $b$ there exists a constant $C$ such that
for every $B$ in $\cB_b$ and for every 
$f$ in $C^{\infty}\bigl(2B\bigr)$
$$
\int_B\mod{f-f_B}^2 \wrt \mu
\leq C \, r^2 \int_{2B} \mod{\nabla f}^2 \wrt \mu.
$$
Suppose also that the volume growth of $M$ is at most exponential.
Note that these assumptions hold if $M$ 
is a Riemannian manifold with Ricci curvature bounded
from below. 

We may define the ``localised'' Riesz transforms $\nabla (\cL + \vep)^{-1/2}$,
where $\vep $ is in $\BR^+$.  
Russ \cite{Ru} proved that the localised Riesz transforms
map local atoms uniformly into $\lu{M}$.  However,
in general, there is no indication that this result
interpolates with the trivial $\ld{M}$ estimate 
to produce $\lp{M}$ boundedness for $p$ in $(1,2)$.

Our theory complements Russ' results.  Indeed, 
Proposition~\ref{p: basic prop} implies
that $\nabla (\cL + \vep)^{-1/2}$ extends to a
bounded operator from $\hu{M}$ into $\lu{M}$.
In the case where $M$ possesses property \PP\ these result interpolate
with the trivial $\ld{M}$ estimate and give that
$\nabla (\cL + \vep)^{-1/2}$ extends to a bounded operator on 
$\lp{M}$ for all $p$ in $(1,2)$, a fact already known,
but whose proof is far from being trivial.

\end{document}